\newtheorem{theorem}{Theorem}
\newtheorem{observation}{Observation}
\newtheorem{definition}{Definition}
\newtheorem{proposition}{Proposition}
\newtheorem{characterization}{Characterization}
\newtheorem{remark}{Remark}
\newtheorem{lemma}{Lemma}
\newtheorem{corollary}{Corollary}
\begin{document}

\title{Nil-Killing vector fields and type III deformations}
\author{Matthew Terje Aadne}
\affil{Faculty of Science and Technology,\\
University of Stavanger,\\
4036 Stavanger, Norway}
\affil{matthew.t.aadne@uis.no}
\maketitle
\begin{abstract}
    This paper is concerned with deformations of Kundt metrics in the direction of type $III$ tensors and nil-Killing vector fields whose flows give rise to such deformations. We find various characterizations within the Kundt class in terms of nil-Killing vector fields and obtain a theorem classifying algebraic stability of tensors, which has an application in finding sufficient criteria for a type $III$ deformation of the metric to preserve spi's. This is used in order to specify  Lie algebras of nil-Killing vector fields that preserve the spi's, for degenerate Kundt metrics. Using this we discuss the characterization of Kundt-CSI spacetimes in terms of nil-Killing vector fields.
\end{abstract}
\section{Introduction}
The scalar polynomial curvature invariants (spi's) of a pseudo-Riemannian manifold are defined as the smooth functions obtained through taking full contraction of the Riemann tensor and its covariant derivatives. They provide invariants of metrics with respect to isometries. In the class of Riemannian metrics they give a full set of invariants, in the sense that they fully characterize the orbits within this class.

Due to the presence of null-directions this no longer holds for other signatures. A particularly rich collection of examples of metrics not characterized by their spi's can be found within the class of Kundt spacetimes \cite{GenKundt,KundtSpacetimes,ExactSolutions}, which are Lorentzian manifolds that have a rank 1 null-distribution with integrable orthogonal compliment such that the distribution contains affinely geodesic, shear-free and divergence-free vector fields. A Kundt spacetime is said to be degenerate if the Riemann tensor and all its covariant derivatives are of type $II$ with respect to the given null-distribution. In \cite{CharBySpi,SCPI} the authors show that such metrics can be smoothly deformed locally such that the spi's remain fixed. In \cite{CharBySpi} they show that any three or four dimensional Lorentzian metric having such a deformation, preserving the spi's and leaving the orbit of the metric, must belong to the class of degenerate Kundt metrics.  A classification of degenerate Kundt spacetimes in terms of local coordinates was given by the authors in \cite{KundtSpacetimes}. 

We say that a Pseudo-Riemannian metric is CSI if the spi's of the metric are constant. Due to the positive definite assumption, Riemannian CSI metrics whose spi's are constant across the manifold are automatically locally homogeneous by a result in \cite{CurvatureInvariantsRiemann}. The added complexity of null-directions for Lorentzian manifolds allows for the existence of metrics whose spi's are constant on the manifold, and where the metric has no local Killing vector fields. In \cite{CSI3,CSI4} the authors show that in dimensions three and four any Lorentzian CSI metric is either locally homogeneous or Kundt, showing the importance of Kundt metrics in this setting as well.

In \cite{herviknew} the author defines nil-Killing vector fields intended to allow for the characterization of CSI metrics in analogy with the characterization of local homogeneous metrics in terms of Killing vector fields. The nil-Killing vector fields generalize Killing vector fields and simultaneously behave similarly to the affinely geodesic, shear-free and divergence-free null vector fields of a Kundt spacetime. They were studied in \cite{IDIFF} and shown to constitute a Lie algebra.

In this paper we study deformations of tensors and metrics whose deformation tensor points in the direction of  type $III$ tensors with respect to some null-distribution. We give conditions for such deformations to preserve the scalar polynomial invariants relevant to the object which is deformed. We use this to further our investigations of nil-Killing vector fields whose flows deform the metric by a tensor of type $III$, allowing for them to be  be studied in this framework. 

We find that such deformations and nil-Killing vector fields play a significant role for Kundt spacetimes:  Firstly, Kundt spacetimes can be characterized by having a null-distribution, $\lambda$, which contains local vector fields about any point that are nil-Killing with respect to the null-distribution, and  whose orthogonal complement, $\lambda^{\perp},$ is integrable . Secondly we see that the transverse metric of a Kundt metric is locally homogeneous if and only there exists a  collection of nil-Killing vector fields which pointwise span the orthogonal complement to the underlying null-distribution. Lastly, the the deformations that preserve the scalar polynomial invariants of the degenerate Kundt spacetime previously discussed, are all in the direction of type III tensors with respect to the given null-distribution.

Along the way we give a theorem for degenerate Kundt spacetimes which characterizes algebraic stability of tensors when taking covariant derivatives, in the sense that their boost-order remains unchanged. This is applied to give criteria for deformations to preserve spi's and provides us with a classification of the deformations given in \cite{CharBySpi,SCPI} as those whose deformation tensor and its covariant derivatives to all orders are of type $III$.

Lastly we shall be discussing CSI spacetimes and examining the possibility for classifying them through the use of nil-Killing vector fields. We show that such a characterization is possible in two special cases: Three dimensional degenerate Kundt spacetimes and degenerate Kundt spacetimes which admit recurrent null-vector fields. 
\section{Deformations on Lorentzian vector spaces}
In this section we shall use algebraic classification of tensors on Lorentzian vector spaces in order to study a class of deformations of type II tensors such that the invariants of the tensors stay fixed throughout the deformation. See \cite{AlgClass} and the appendix for an overview of boost-weight classification of tensors.
    
    Let us consider a triple $(V,g,\lambda)$ consisting of a vector space, a Lorentzian inner product and a one-dimensional null subspace. Using this datum we may classify the tensors on $V$ according to the boost-order. Denoting by $\mathcal{T}_{k}(V)$ the vector space of rank $k$ tensors, we get at filtration 
    \begin{equation}
        \mathcal{T}_{k,-k}(V)\subset \mathcal{T}_{k,-k+1}(V)\subset\dots \subset \mathcal{T}_{k,k}(V),
    \end{equation}
    where $\mathcal{T}_{k,s}(V)$ is the subspace given by tensors of rank $k$ and  boost order $s$ with respect to $\lambda.$ Tensors of boost order up to $0$ (resp. -1) are said to be of type $II$ (resp. type $III$).
    
    Given a nullbasis $\{k,l,m_{i}\}$ for $V$ such that $k\in \lambda$, we have the decomposition 
    
    \begin{equation}
    \mathcal{T}_{k}(V)=\bigoplus_{s=-k}^{k}\mathcal{B}_{k,s},    
    \end{equation}
    where $\mathcal{B}_{k,s}$ are the rank $k$ tensors of boost-weight $s$ with respect to $\{k,l,m_{i}\}$. The filtration and decomposition are related by
    \begin{equation}
    \mathcal{T}_{k,r}(V)=\bigoplus_{s=-k}^{r}\mathcal{B}_{k,s}.  
    \end{equation}
    
    Given tensors $S$ and $T$ the boost weight $s$ component of their tensor product is given by 
    \begin{equation}
        (S\otimes T) _{s} = \sum_{s_{1}+s_{2} =s}S_{s_1}\otimes T_{s_{2}}.\
    \end{equation}
    Furthermore, if $P\in \mathcal{T}_{2k}(V) $ is any even ranked tensor, then for any full contraction $Tr(P)$, only the boost weight zero components give a contribution, i.e.,
    \begin{equation}
        Tr(P)=Tr(P_{0}).
    \end{equation}
    Now if $S$ is any rank $k$ tensor the scalar polynomial invariants (spi's) of $S$ are given by the set of full contractions of the even ranked tensor products, i.e., \begin{equation}
        Tr(\bigotimes_{l=1}^{p}S),
    \end{equation}
    such that $pk$ is even.
    Supposing that $S$ is a tensor of type $II,$ it follows from the above considerations that \begin{equation}
        Tr(\bigotimes_{l=1}^{p}S)=Tr((\bigotimes_{l=1}^{p}S)_{0})=Tr(\bigotimes_{l=1}^{p}S_{0}),
    \end{equation}
    which shows that the spi's of $S$ only depend on the boost-weight zero component of $S.$ 
    
    Thus if $S, T$ are rank $k$ tensor of type $II$ and $III$ respectively, then $S$ and $S+T$ have identical spi's. In particular, if $S_{t}$ is a  path within the space of type $II$ tensors such that $\frac{d}{dt}S_{t}$ is a tensor of type $III$, for all $t,$ then the spi's are identical for each tensor along the curve.
    
    A partial converse follows from the proof of theorem $II.9$ and its corollary in \cite{SCPI}, namely that if $S_{t}$ is a smooth path of type $II$ tensors such that the spi's remain unchanged along the curve, then there exists a path $\phi_{t}$ in $Sim(n-2)$, the Lie group of orthogonal transformations preserving the line $\lambda,$ and a path $P_{t}$ in the space of type $III$ tensors such that \begin{equation}
        S_{t}=\phi_{t}^{*}(S_{0}) + P_{t},
    \end{equation}
    for all $t$.
    
    We have the following characterizations concerning the behaviour of linear maps with respect to algebraic type and spi's:
    
    If $(V_{i},g_{i},\lambda_{i})$, $i=1,2$, are two Lorentzian inner product spaces with given null lines, then an invertible linear map $f:V_{1}\rightarrow V_{2}$ is said to \emph{preserve algebraic type} if the pull-back $f^{*}$ preserves the boost-order of tensors taken with respect to $\lambda_{i},$ $i=1,2.$ 
    
    \begin{proposition} An invertible linear map $f$ preserves algebraic type iff. $f(\lambda_{1})\subset \lambda_{2}$ and $f(\lambda^{\perp}_{1})\subset \lambda^{\perp}_{2}$.
    \end{proposition}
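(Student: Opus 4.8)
The plan is to reduce the statement to rank one and then carry the conclusion through tensor products using the boost-weight addition formula recorded above. Since $f^{*}$ acts naturally on covariant tensors, I would work throughout with the covariant filtration, whose rank-one piece is $V^{*}$. The conceptual heart is to recognise this rank-one boost-order filtration as an intrinsic flag attached to $\lambda$. Fixing a null basis $\{k,l,m_i\}$ with $k\in\lambda$ and $g(k,l)=1$, I would compute the boost weights of the dual components under $k\mapsto e^{\alpha}k,\ l\mapsto e^{-\alpha}l,\ m_i\mapsto m_i$: the number $\omega(k)$ scales with weight $+1$, each $\omega(m_i)$ with weight $0$, and $\omega(l)$ with weight $-1$. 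Hence a one-form $\omega$ has boost order $+1,0$ or $-1$ according to whether $\omega(k)\neq0$, or $\omega(k)=0$ but $\omega|_{\lambda^{\perp}}\neq0$, or $\omega|_{\lambda^{\perp}}=0$; this identifies the rank-one filtration with $\mathrm{Ann}(\lambda^{\perp})\subset\mathrm{Ann}(\lambda)\subset V^{*}$, a flag depending only on $\lambda$ and $g$.

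For the forward implication I would specialise the hypothesis to one-forms. Preservation of boost order forces $f^{*}(\mathrm{Ann}(\lambda_2))=\mathrm{Ann}(\lambda_1)$ and $f^{*}(\mathrm{Ann}(\lambda_2^{\perp}))=\mathrm{Ann}(\lambda_1^{\perp})$. Using the identity $f^{*}(\mathrm{Ann}(W))=\mathrm{Ann}(f^{-1}(W))$, valid for invertible $f$, these become $f^{-1}(\lambda_2)=\lambda_1$ and $f^{-1}(\lambda_2^{\perp})=\lambda_1^{\perp}$, that is $f(\lambda_1)=\lambda_2$ and $f(\lambda_1^{\perp})=\lambda_2^{\perp}$, which in particular yields the two inclusions asserted.

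For the converse I would first observe that invertibility together with $\dim\lambda_1=\dim\lambda_2=1$ and $\dim\lambda_1^{\perp}=\dim\lambda_2^{\perp}=n-1$ upgrades the two inclusions to equalities. Then $(f^{*}\omega)(k)=\omega(f(k))$ with $f(k)$ spanning $\lambda_2$ gives $f^{*}(\mathrm{Ann}(\lambda_2))=\mathrm{Ann}(\lambda_1)$, and the same computation on $\lambda^{\perp}$ gives $f^{*}(\mathrm{Ann}(\lambda_2^{\perp}))=\mathrm{Ann}(\lambda_1^{\perp})$, so $f^{*}$ preserves the rank-one filtration. To reach arbitrary rank I would invoke that the boost-order filtration on $\mathcal{T}_k(V)$ is the tensor-product filtration $\mathcal{T}_{k,r}(V)=\sum_{s_1+\dots+s_k\le r}\mathcal{T}_{1,s_1}(V)\otimes\dots\otimes\mathcal{T}_{1,s_k}(V)$, which follows directly from the addition formula; since $f^{*}$ acts as $(f^{*})^{\otimes k}$ and is filtered on $V^{*}$, it is filtered on each $\mathcal{T}_k(V)$, and applying the argument to $f^{-1}$ as well yields preservation of boost order in both directions.

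The step I would watch most carefully is that $f^{*}$ need not respect the boost-weight grading, only the filtration: the image of a covector of pure boost weight may acquire strictly lower-weight components. This is harmless for boost order, but it is exactly why the argument must be organised around the filtered subspaces $\mathcal{T}_{k,r}$ rather than the graded pieces $\mathcal{B}_{k,s}$. The only other place a slip could occur is in the duality bookkeeping — remembering that $\mathrm{Ann}$ reverses inclusions and that $f^{*}$ corresponds to $f^{-1}$ on subspaces — which is what makes the flag conditions come out on $f$ itself rather than on $f^{-1}$.
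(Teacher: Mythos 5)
Your proposal is correct and takes essentially the same route as the paper, whose entire proof of the backward direction is to reduce the claim to the annihilator inclusions $f^{*}(\mathrm{Ann}(\lambda_{2}))\subset\mathrm{Ann}(\lambda_{1})$ and $f^{*}(\mathrm{Ann}(\lambda_{2}^{\perp}))\subset\mathrm{Ann}(\lambda_{1}^{\perp})$ --- exactly your rank-one flag step --- while declaring the forward direction trivial. You simply make explicit what the paper leaves implicit (the identification of the rank-one boost-order filtration with $\mathrm{Ann}(\lambda^{\perp})\subset\mathrm{Ann}(\lambda)\subset V^{*}$, the dimension count upgrading the inclusions to equalities, and the extension to higher rank via the tensor-product filtration and the boost-weight addition formula), so no correction is needed.
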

    \begin{proof}
    $"\Rightarrow"$ is trivial. In order to prove $"\Leftarrow"$ we need only show that $f(\lambda_{1})\subset \lambda_{2}$ and $f(\lambda^{\perp}_{1})\subset \lambda^{\perp}_{2}$ implies that $f^{*}(\lambda_{2}^{*})\subset\lambda_{1}^*$ and $f^{*}((\lambda_{2}^{\perp})^{*})\subset(\lambda_{1}^{\perp})^*$, but this is just a general property of annihilators.
    \end{proof}
    
    \begin{proposition}\label{null-transformation}
    Let $(V_{i},g_{i},\lambda_{i})$, $i=1,2$, be two Lorentzian inner product spaces with given null lines. If $f:V_{1}\rightarrow V_{2}$ is a invertible linear map preserving algebraic type, then the following are equivalent
    \begin{enumerate}[i)]
        \item $f$ is an isometry on $\lambda_{1}^{\perp}$ and $g_{2}(fx,fz)=g_{1}(x,z)$, for all $x\in \lambda$ and $z\in V_{1}$.
        \item $f^{*}g_{2}-g_{1}$ is of type $III$ with respect to $\lambda_{1}$.
        \item If $T$ is any even ranked type $II$ tensor, then $f^{*}$ preserves full its contractions, i.e., 
        \begin{equation}
            Tr(T)=Tr(f^*T).
        \end{equation}
    \end{enumerate}
    \end{proposition}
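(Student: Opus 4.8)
The plan is to fix a null basis $\{k,l,m_i\}$ of $V_1$ with $k\in\lambda_1$, $g_1(k,l)=1$ and $g_1(m_i,m_j)=\delta_{ij}$, and to transport the whole question to $V_1$ by setting $\tilde{g}:=f^{*}g_2$, a second inner product on $V_1$. Because $f$ preserves algebraic type, the previous proposition gives $f(\lambda_1)\subset\lambda_2$ and $f(\lambda_1^{\perp})\subset\lambda_2^{\perp}$; hence $\tilde{g}(k,k)=g_2(fk,fk)=0$ (as $fk$ is null) and $\tilde{g}(k,m_i)=g_2(fk,fm_i)=0$ (as $fk\in\lambda_2$ and $fm_i\in\lambda_2^{\perp}$). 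Thus the boost weight $+1$ and $+2$ components of $\tilde{g}-g_1$ vanish automatically, $\tilde{g}$ again shares the null line $\lambda_1$ and the complement $\lambda_1^{\perp}$, and the only components in which $\tilde{g}$ can differ from $g_1$ are $\tilde{g}(k,l)$, $\tilde{g}(m_i,m_j)$ (boost weight $0$) and $\tilde{g}(l,m_i)$, $\tilde{g}(l,l)$ (boost weight $-1,-2$).

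With this reformulation I would first prove $(i)\Leftrightarrow(ii)$ by pure bookkeeping: condition $(i)$ says exactly that $\tilde{g}=g_1$ on $\lambda_1^{\perp}\times\lambda_1^{\perp}$ and on $\lambda_1\times V_1$, which in the basis pins down $\tilde{g}(k,l)=1$ and $\tilde{g}(m_i,m_j)=\delta_{ij}$ while leaving only $\tilde{g}(l,m_i)$ and $\tilde{g}(l,l)$ free — precisely the statement that $\tilde{g}-g_1$ has boost order $\le -1$, i.e. is type $III$.

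For the link with $(iii)$ I would record the tautology $Tr(T)=Tr_{f^{*}g_2}(f^{*}T)$, valid for any invertible $f$ since $f:(V_1,\tilde{g})\to(V_2,g_2)$ is by construction an isometry and full contraction is natural. As $f^{*}$ is a bijection of tensors preserving type $II$, statement $(iii)$ becomes the intrinsic condition $Tr_{g_1}(S)=Tr_{\tilde{g}}(S)$ for every even-ranked type $II$ tensor $S$ on $V_1$, where $Tr_{g_1},Tr_{\tilde{g}}$ are full contractions with respect to $g_1$ and $\tilde{g}$. To get $(ii)\Rightarrow(iii)$ I would write $\tilde{g}^{-1}=g_1^{-1}+Q$. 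Passing to the $\tilde{g}$-adapted null frame $\{k,\tilde{l},m_i\}$ obtained from $\{k,l,m_i\}$ by the null rotation about $k$ that removes the $\tilde{g}(l,m_i)$ and $\tilde{g}(l,l)$ terms (this fixes $k$ and the $m_i$ and changes only $l$), one sees $\tilde{g}^{-1}=k\otimes\tilde{l}+\tilde{l}\otimes k+\sum_i m_i\otimes m_i$ with $\tilde{l}-l\in\lambda_1^{\perp}$, so that $Q$ is a combination of $k\otimes k$ and $k\otimes m_i$ only, a contravariant tensor of strictly positive boost weight. In a full contraction of $S$ against $p$ copies of $g_1^{-1}+Q$, every $g_1^{-1}$ factor feeds a boost-balanced index pair ($k$ with $l$, or $m_i$ with $m_j$) into the slots of $S$, while every $Q$ factor feeds at least one extra $k$-slot and no $l$-slot; hence any term containing a factor of $Q$ contracts a component $S_{a_1\dots a_{2p}}$ with strictly more $k$'s than $l$'s, i.e. of positive boost weight, which vanishes for type $II$ $S$. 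Only the pure $g_1^{-1}$ term survives, giving $Tr_{\tilde{g}}(S)=Tr_{g_1}(S)$.

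Finally I would prove $(iii)\Rightarrow(ii)$ by rank $2$ type $II$ test tensors, which turns out to suffice so that no higher-rank invariants are needed. Taking $P$ any symmetric form supported on $\mathrm{span}(m_i)$ with $P_{ij}:=P(m_i,m_j)$ gives $Tr_{g_1}(P)=\sum_i P_{ii}$ and $Tr_{\tilde{g}}(P)=\sum_{i,j}\tilde{g}^{m_im_j}P_{ij}$, and a short block inversion shows the screen block of $\tilde{g}^{-1}$ equals $(\tilde{g}(m_i,m_j))^{-1}$, so equality for all such $P$ forces $\tilde{g}(m_i,m_j)=\delta_{ij}$; taking $S=g_1(k,\cdot)\otimes g_1(l,\cdot)+g_1(l,\cdot)\otimes g_1(k,\cdot)$, which is type $II$ with $Tr_{g_1}(S)=2$ and $Tr_{\tilde{g}}(S)=2/\tilde{g}(k,l)$, forces $\tilde{g}(k,l)=1$. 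Together with the automatic vanishing of the positive boost weight components this says $\tilde{g}-g_1$ is type $III$, closing the equivalences. The main obstacle I anticipate is the step involving $Q$: one must control how metric inversion interacts with the boost-order filtration and check that a positive boost weight perturbation of the inverse metric cannot contribute to the full contraction of a type $II$ tensor; the cleanest route is the null-rotation normal form for $\tilde{g}$ together with the boost-weight counting above.
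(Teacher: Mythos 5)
Your proposal is correct and follows the same skeleton as the paper's proof (bookkeeping for $i)\Leftrightarrow ii)$, an inverse-metric perturbation plus a vanishing argument for $ii)\Rightarrow iii)$, rank-$2$ test tensors for $iii)\Rightarrow ii)$), but two steps are implemented genuinely differently. For the key sub-lemma that the perturbation $Q=\tilde{g}^{-1}-g_1^{-1}$ is harmless, the paper computes how $(f^{-1})^{*}$ acts on $g_1$-metric duals of vectors in $\lambda_1$ and $\lambda_1^{\perp}$ and concludes abstractly that $f^{*}g_2^{-1}-g_1^{-1}$ is of type $III$; your null-rotation normal form $\tilde{g}^{-1}=k\otimes\tilde{l}+\tilde{l}\otimes k+\sum_i m_i\otimes m_i$ with $\tilde{l}-l\in\lambda_1^{\perp}$ is more concrete and exhibits $Q$ explicitly as a span of $k\otimes_s m_i$ and $k\otimes k$, after which your counting (any term with a $Q$ factor feeds strictly more $k$-slots than $l$-slots into $S$, hitting only components of positive boost weight, which vanish for type $II$) is exactly the paper's ``full contraction of type $II$ against type $III$ vanishes'' principle made component-wise. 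One caution: you call $Q$ ``strictly positive boost weight'' where the paper calls the same object ``type $III$'' --- these are opposite labeling conventions for contravariant tensors, and while your vanishing criterion is convention-independent and correct, you should fix one convention to avoid the apparent clash; also, your counting is written only for fully covariant $S$, and for mixed-variance tensors you must additionally dispose of terms containing the covariant perturbation $P=\tilde{g}-g_1$, which the paper does and which follows by the mirrored count. For $iii)\Rightarrow ii)$ the paper is slicker: it tests with the \emph{contravariant} type $II$ tensors $k\otimes l$ and $m_i\otimes m_j$, whose full contractions use only the metric itself, directly yielding $g_1(f^{-1}k,f^{-1}l)=g_2(k,l)$ and $g_1(f^{-1}m_i,f^{-1}m_j)=g_2(m_i,m_j)$ with no inversion; your covariant test tensors require the block inversion of $\tilde{g}$ (your screen-block claim $Z=G^{-1}$ and $\tilde{g}^{-1}(k^{\natural},l^{\natural})=1/\tilde{g}(k,l)$ are both correct, relying on the automatically vanishing entries $\tilde{g}(k,k)=\tilde{g}(k,m_i)=0$), so your route works but costs an extra computation that the paper's choice of test tensors avoids.
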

    \begin{proof}
    $"i)\Leftrightarrow ii)"$ Trivial.\newline
    
    $"ii)\Rightarrow iii)"$
    The map $f^*:\mathcal{T}(V_{2})\rightarrow \mathcal{T}(V_{1})$ between tensor spaces is induced by the pull-back
    $$f^*:V_{2}^{*}\rightarrow V_{1}^{*}$$
    and $$f^{-1}:V_{2}\rightarrow V_{1}.$$
    
    First we wish to show that $f^{*}g_{2}^{-1}-g_{1}^{-1}$ is of type $III$. In order to do so we must show how $(f^{-1})^{*}:V_{1}^{*}\rightarrow V_{2}^{*}$ behaves with respect to taking metric duals of vectors. We let $\natural$ denote the metric dual. Suppose that for $i=1,2$, $w_{i}\in\lambda_{i}^{\perp}$, then
    \begin{equation}
        ((f^{-1})^{*}w_{1}^{\natural})(w_{2})=w_{1}^{\natural}(f^{-1}w_{2})=g_{1}(w_{1},f^{-1}w_{2})
    \end{equation}
    $$=g_{1}(f^{-1}(fw_{1}),f^{-1}w_{2})=g_{2}(fw_{1},w_{2})=(fw_{1})^{\natural}(w_{2}),$$
    and hence we have \begin{equation}(f^{-1})^{*}w^{\natural}-(fw)^{\natural}\in \lambda_{2}^{*},\end{equation}
    for all $w\in \lambda_1.$
    
    Using the same reasoning one can show that
    \begin{equation}
       (f^{-1})^{*}z^{\natural}-(fz)^{\natural}\in (\lambda_{2}^{\perp})^{*},
    \end{equation}
     and
    \begin{equation}
       (f^{-1})^{*}x^{\natural}=(fx)^{\natural},
    \end{equation}
    for all $x\in \lambda_{1},\, z\in V_1.$
    
    Thus supposing that $x\in\lambda_{1}$, $w_{1},w_{2}\in \lambda_{1}^{\perp}$ and $z\in V_1$, we have that
    \begin{equation}
        (f^{*}g_{2}^{-1})(w_{1}^{\natural},w_{2}^{\natural})=g_{2}^{-1}((f^{-1})^{*}w_{1}^{\natural},(f^{-1})^{*}w_{2}^{\natural})=g_{2}^{-1}((fw_{1})^{\natural},(fw_{2})^{\natural})
    \end{equation}
    $$=g_{2}(fw_{1},fw_{2})=g_{1}(w_{1},w_{2})=g_{1}^{-1}(w_{1}^{\natural},w_{2}^{\natural}),$$
    and in the same way
    \begin{equation}
        f^{*}g_{2}^{-1}(x^{\natural},z^{\natural})=g_{1}^{-1}(x^{\natural},z^{\natural}),
    \end{equation}
    which proves that $f^{*}g_{2}^{-1}-g_{1}^{-1}$ is of type $III$. Hence we can fined type $III$ tensors $Q,P$ such that 
    \begin{equation}
        g_{1}=f^{*}g_{2}+P,\quad g_{1}^{-1}=f^*g_{2}^{-1}+Q.
    \end{equation}
    
    Now suppose that $T$ is an even ranked type $II$ tensor, and we peform a full contraction of $T$ of form
    \begin{equation}
        Tr(T)=(g_{2}\otimes\cdots\otimes g_{2}\otimes g_{2}^{-1}\otimes\cdots \otimes g_{2}^{-1})*(T),
    \end{equation}
    where $*$ represents some full contraction between the two tensors.  The expression for the corresponding full contraction of $f^*{T}$ is given by:
    
    \begin{equation}
        Tr(f^{*}T) = (g_{1}\otimes\cdots\otimes g_{1}\otimes g_{1}^{-1}\otimes\cdots \otimes g_{1}^{-1})*(f^*T)
    \end{equation}
    $$=((f^*g_{2}+P)\otimes\cdots\otimes (f^*g_{2}+P)\otimes (f^*g_{2}^{-1}+Q)\otimes\cdots \otimes (f^*g_{2}^{-1}+Q)*(f^*T)$$
    $$\overset{(a)}{=}(f^*g_{2}\otimes\cdots\otimes f^*g_{2}\otimes f^*g_{2}^{-1}\otimes\cdots \otimes f^*g_{2}^{-1}*(f^*T)$$
    $$\overset{(b)}{=}(g_{2}\otimes\cdots\otimes g_{2}\otimes g_{2}^{-1}\otimes\cdots \otimes g_{2}^{-1})*(T)=Tr(T),$$
    Equality (a) follows since any summand containing a factor of either $P$ or $Q$ must vanish since it represents a full contraction between two tensors of type $II$ and $III$. The fact that $(b)$ holds is just a consequence of the definition of the pull-back. This proves the implication.
    
    $"iii)\Rightarrow ii)"$ Let $\{k,l,m_{i}\}$ be a null-basis for $V_{1}$ such that $k\in\lambda_{2}.$ Since $k\otimes l$ and $m_{i}\otimes m_{j}$ are type $II$ tensors, for each i,j, the assumptions of $iii)$ imply that
    \begin{equation}
        g_{1}(f^{-1}k,f^{-1}l)=Tr(f^*({k\otimes l}))=Tr({k\otimes l})=g_{2}(k,l),
    \end{equation}
    and 
    \begin{equation}
        g_{1}(f^{-1}m_{i},f^{-1}m_{j})=Tr(f^*{m_{i}\otimes m_{j}})=Tr({m_{i}\otimes m_{j}})=g_{2}(m_{i},m_{j}),
    \end{equation}
    which implies $ii).$
    \end{proof}

\section{Deformations of type II tensors on Lorentzian manifolds }\label{DefOfTensors}
Here we shall extend the discussion of the previous section and study deformations of type II tensors on Lorentzian manifolds which preserve the scalar polynomial invariants induced by the tensor. Specializing the discussion to deformations arising from one-parameter groups of diffeomorphisms, we shall find useful criteria for the spi's of the tensor to have constant values along integral curves of certain vector fields.

Now let us consider a triple $(M,g,\lambda)$ consisting of a manifold, a Lorentzian metric and a null-distribution respectively. The null distribution $\lambda$ allows us to unambiguously consider tensors of given algebraic type with respect to $\lambda.$  Whenever we say that a tensor is of a given algebraic type, it shall always be understood to be with respect to $\lambda.$

Given a tensor $T$ on $M$, the scalar polynomial invariants of $T$ are the smooth functions given by the full contractions of even-ranked tensors in the tensor algebra generated by $T.$\newline

If $S_t$ is a path of type $II$ tensors it follows from the discussion in the previous section that if $\frac{d}{dt}S_{t}$ is of type $III$, the spi's stay the same for each tensor in the curve.\newline

Now we shall investigate deformations of type II tensors induced by pull-backs by one-parameter groups of diffeomorphisms. First we have the following observation with respect to the preservation of algebraic type which follows directly from the discussion in the previous section:

\begin{observation}\label{algebraic preservation}
Let $f$ be a diffeomorphism on $(M,g,\lambda).$ The pull-back $f^{*}$ preserves algebraic type of tensors iff. $f_{*}\lambda=\lambda$ and $f_{*}\{\lambda\}^{\perp}= \{\lambda\}^{\perp}.$  In particular, if $X$ is a vector field then its flow $\phi_{t}$ preserves algebraic type, for all $t,$ iff. $[X,\lambda]\subset \lambda$ and $[X,\lambda^{\perp}]\subset \lambda^{\perp}.$
\end{observation}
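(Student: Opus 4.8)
The plan is to reduce this global statement to the pointwise linear-algebra result of the first proposition of the previous section (the characterization of type-preserving invertible maps) and then to invoke the standard criterion for a distribution to be invariant under a flow.

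I would first dispose of the biconditional for a single diffeomorphism. Algebraic type is a pointwise notion: for a tensor field $T$ one has $(f^{*}T)_{p}=(df_{p})^{*}T_{f(p)}$, so the boost order of $(f^{*}T)_{p}$ with respect to $\lambda_{p}$ equals the boost order of $T$ at $f(p)$ with respect to $\lambda_{f(p)}$ precisely when the invertible linear map $df_{p}\colon(T_{p}M,g_{p},\lambda_{p})\to(T_{f(p)}M,g_{f(p)},\lambda_{f(p)})$ preserves algebraic type. Hence $f^{*}$ preserves algebraic type of all tensor fields iff $df_{p}$ preserves algebraic type at every $p$. Applying the linear-algebraic proposition at each point, this holds iff $df_{p}(\lambda_{p})\subset\lambda_{f(p)}$ and $df_{p}(\lambda_{p}^{\perp})\subset\lambda_{f(p)}^{\perp}$ for all $p$. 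Since $df_{p}$ is invertible and $\dim\lambda_{p}=\dim\lambda_{f(p)}=1$ while $\dim\lambda_{p}^{\perp}=\dim\lambda_{f(p)}^{\perp}=n-1$, these inclusions are forced to equalities. Rewriting in terms of the pushforward distribution, $(f_{*}\lambda)_{f(p)}=df_{p}(\lambda_{p})$, this is exactly the conditions $f_{*}\lambda=\lambda$ and $f_{*}\lambda^{\perp}=\lambda^{\perp}$.

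For the ``in particular'' clause I would apply the first part to each time-$t$ map of the flow: $\phi_{t}^{*}$ preserves algebraic type for all $t$ iff $(\phi_{t})_{*}\lambda=\lambda$ and $(\phi_{t})_{*}\lambda^{\perp}=\lambda^{\perp}$ for all $t$, i.e. iff both $\lambda$ and $\lambda^{\perp}$ are invariant under the flow of $X$. It then remains to use the standard fact that a smooth distribution $D$ is flow-invariant for $X$ iff $[X,\Gamma(D)]\subset\Gamma(D)$. The ``only if'' direction comes from differentiating $\phi_{t}^{*}Y\in\Gamma(D)$ at $t=0$ and using $\frac{d}{dt}\big|_{t=0}\phi_{t}^{*}Y=[X,Y]$. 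For the ``if'' direction, choose a local frame $Y_{1},\dots,Y_{k}$ for $D$ and write $[X,Y_{i}]=\sum_{j}a_{ij}Y_{j}$; then at a fixed point $p$ the vectors $Z_{i}(t):=(\phi_{t}^{*}Y_{i})_{p}$ solve the linear system $\dot{Z}_{i}=\sum_{j}a_{ij}(\phi_{t}(p))\,Z_{j}$, whose solution is $Z_{i}(t)=\sum_{j}\Phi_{ij}(t)Z_{j}(0)$ with $\Phi$ the fundamental matrix, so each $Z_{i}(t)$ remains in $\mathrm{span}(Y_{1},\dots,Y_{k})_{p}=D_{p}$. Thus $\phi_{t}^{*}D\subset D$, and by invertibility of $\phi_{t}^{*}$ equality holds. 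Applying this with $D=\lambda$ and $D=\lambda^{\perp}$ gives $[X,\lambda]\subset\lambda$ and $[X,\lambda^{\perp}]\subset\lambda^{\perp}$, completing the equivalence.

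The routine parts are the pointwise reduction and the dimension-count that upgrades the inclusions of the linear-algebraic proposition to equalities. The step requiring the most care is the flow-invariance criterion in the second clause: although it is a textbook fact, the ``if'' direction must be organized via the fundamental-matrix argument above so that one genuinely concludes $\phi_{t}^{*}D=D$ for all $t$, rather than only the infinitesimal statement that brackets land back in $D$, and the argument must be applied uniformly to both smooth distributions $\lambda$ and $\lambda^{\perp}$.
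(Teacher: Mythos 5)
Your proposal is correct and takes essentially the same route as the paper: the paper offers no separate proof, stating that the observation ``follows directly from the discussion in the previous section,'' i.e.\ precisely your pointwise reduction to the linear-algebraic proposition (with the dimension count upgrading inclusions to equalities) combined with the standard flow-invariance criterion for distributions. Your fundamental-matrix argument is just the careful write-up of that textbook fact, so there is no genuine divergence from the paper's intended argument.
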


Given a triple $(M,g,\lambda)$, a vector field $X$ on $M$ is said to preserve algebraic type if its flow $\phi_{t}$ preserves algebraic type, for all $t.$

We have the following result regarding the behaviour of the spi's during  deformations induced by one-parameter groups of diffeomorphisms:

\begin{proposition}\label{TypeIIIflowoftensor}
Suppose that $X$ is a vector field on $(M,g,\lambda)$ with flow $\phi_{t}$ and  $T$ is a tensor of type $II$ w.r.t. $\lambda$. If $X$ preserves algebraic type and  $\mathcal{L}_{X}T$ is of type $III$, then  $\phi_{t}^{*}T-T$ is of type $III,$ for all $t.$ In particular the spi's of $\phi_{t}^{*}T$ are identical for each $t.$
\end{proposition}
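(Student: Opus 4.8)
The plan is to study the curve $t\mapsto \phi_t^{*}T$ by differentiating it, establishing that its velocity is everywhere of type $III$, and then recovering $\phi_t^{*}T-T$ by integration. First I would invoke the standard flow identity $\frac{d}{dt}\phi_t^{*}T=\phi_t^{*}(\mathcal{L}_{X}T)$, valid on the domain on which the flow is defined.

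Next, since $X$ preserves algebraic type, Observation \ref{algebraic preservation} guarantees that each pull-back $\phi_t^{*}$ preserves the boost order of tensors; in particular $\phi_t^{*}$ maps type $III$ tensors to type $III$ tensors. Because $\mathcal{L}_{X}T$ is of type $III$ by hypothesis, it follows that $\phi_t^{*}(\mathcal{L}_{X}T)$ is of type $III$ for every $t$, so that $\frac{d}{dt}\phi_t^{*}T$ is a type $III$ tensor for all $t$.

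I would then write $\phi_t^{*}T-T=\int_{0}^{t}\phi_s^{*}(\mathcal{L}_{X}T)\,ds$ and argue pointwise. At each point $p$ the integrand takes values in the linear subspace $\mathcal{T}_{k,-1}(T_{p}M)$ of type $III$ tensors determined by $\lambda_{p}$, and this subspace is fixed, i.e.\ independent of $s$. Since a finite-dimensional linear subspace is closed under integration, the integral again lies in $\mathcal{T}_{k,-1}(T_{p}M)$; hence $\phi_t^{*}T-T$ is of type $III$ for every $t$. This is the step requiring the most care, and the point to stress is that type $III$ is a pointwise linear condition, so that the integral of a curve lying entirely in the type $III$ subspace remains in it.

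For the final assertion I would decompose $\phi_t^{*}T=T+(\phi_t^{*}T-T)$ as the sum of a type $II$ tensor and a type $III$ tensor. Since the spi's of a type $II$ tensor depend only on its boost weight zero component, as established at the start of Section $2$, and the type $III$ summand contributes nothing to boost weight zero, every full contraction of $\phi_t^{*}T$ agrees with the corresponding full contraction of $T$. Therefore the spi's of $\phi_t^{*}T$ coincide with those of $T$ and are in particular identical for all $t$.
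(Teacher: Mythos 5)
Your proposal is correct and follows essentially the same route as the paper: both compute $\frac{d}{dt}\phi_t^{*}T=\phi_t^{*}(\mathcal{L}_{X}T)$, use Observation \ref{algebraic preservation} to conclude this velocity is of type $III$ for all $t$, integrate pointwise in the fixed linear subspace of type $III$ tensors, and invoke the Section 2 discussion for the spi conclusion. The only cosmetic difference is that you cite the flow identity directly and make the integration step explicit, whereas the paper rederives the identity via a limit computation and leaves the integration implicit.
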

\begin{proof}
By assumption we know that
\begin{equation}
    (\mathcal{L}_{X}T)=\lim_{t\rightarrow 0}\frac{1}{t}((\phi_{t}^*T)-T)
\end{equation}
is of type III. Now suppose that $s\in \mathbb{R}$ and $p\in M.$ By observation \ref{algebraic preservation} the pull-back by $\phi_{s}$ preserves the algebraic type of tensors, and thus
\begin{equation}
    \lim_{t\rightarrow 0} \frac{1}{t}((\phi_{s+t}^{*}T)_{p}-(\phi_{s}^{*}T)_{p})=[(\phi_{s})^{*}\lim_{t\rightarrow 0}\frac{1}{t}((\phi_{t}^*T)-T)]_{p}=[(\phi_{s})^{*}(\mathcal{L}_{X}T)]_{p}
\end{equation}
is of type III. Thus $T_{t}:=\phi_{t}^{*}T$ is a smooth family of tensors of type $II$ w.r.t. $\lambda$ such that $\frac{d}{dt}T_t$ is of type $III$ w.r.t. $\lambda$, for all $t,$ and it therefore follows that $T_t-T_{0}$ is of type $III,$ for all $t.$ The discussion in the previous section shows that the spi's of $T_t$ are the same for each $t.$
\end{proof}

We recall the following definition from \cite{herviknew} and \cite{IDIFF}:
\begin{definition}
Let $(M,g)$ be a Lorentzian manifold. A vector field $X$ is said to be nil-Killing if the endomorphism corresponding to $\mathcal{L}_{X}g$ is nilpotent. 
\end{definition}

One can show that $X$ is nil-Killing iff. at each point $\mathcal{L}_{X}g$ is of type $III$ with respect to some null line at that point. Hence we have the following natural refinement of the definition:

\begin{definition}
Let $(M,g,\lambda)$ be a Lorentzian manifold with a null distribution. A vector field $X$ is said to be nil-Killing with respect to $\lambda$ if $\mathcal{L}_{X}g$ is of type $III$ with respect to $\lambda.$
\end{definition}

It was seen in \cite{IDIFF} that the collection of vector fields $X$ which are nil-Killing  w.r.t $\lambda$ such that $[X,\lambda]\subset\lambda$ is a Lie algebra, which we shall denote by \begin{equation}\mathfrak{g}_{\lambda}=\{X \text{ nil-Killing w.r.t }\lambda:[X,\lambda]\subset\lambda\}.
\end{equation}

A special class of these, the Kerr-Schild vector fields, defined as those vector fields for which $\mathcal{L}_{X}g\in \lambda^{*} \otimes \lambda^{*}$ and $[X,\lambda]\subset \lambda$ were defined and studied in \cite{Kerr}.

We have the following characterization which is a corollary to proposition \ref{TypeIIIflowoftensor}:
\begin{corollary}\label{NilFlow}
Let $(M,g,\lambda)$ be a Lorentzian manifold with a null-distribution. Suppose $X\in \mathcal{X}(M)$ with flow $\phi_t$ satisfies $[X,\lambda]\subset \lambda$. Then the following are equivalent:
\begin{enumerate}[i)]
    \item X is nil-Killing with respect to $\lambda.$
    \item $\phi_{t}^{*}g-g$ is of type $III$ w.r.t. $\lambda$.
    \item $(\phi_{t})_{*}$ is an isometry on $(\lambda^{\perp})_{p}$ and satisfies $g({(\phi_{t})}_{*}x,{(\phi_{t})}_{*}z)=g(x,z),$ for all $p\in M$, $x\in \lambda_{p}$ and $z\in T_{p}M.$
\end{enumerate}
\end{corollary}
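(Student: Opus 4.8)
The plan is to prove the cycle $i)\Rightarrow ii)\Rightarrow i)$ together with the equivalence $ii)\Leftrightarrow iii)$, using Proposition \ref{TypeIIIflowoftensor} to pass between the infinitesimal condition on $\mathcal{L}_{X}g$ and the finite condition on $\phi_t^{*}g-g$, and Proposition \ref{null-transformation} to convert $ii)$ into the pointwise statement $iii)$. The one recurring difficulty is that the standing hypothesis supplies only $[X,\lambda]\subset\lambda$, whereas both cited results require the full preservation of algebraic type, i.e.\ also $[X,\lambda^{\perp}]\subset\lambda^{\perp}$ (equivalently $(\phi_t)_{*}\lambda^{\perp}=\lambda^{\perp}$). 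So the first task in each direction is to extract the condition on $\lambda^{\perp}$ from whichever of $i)$--$iii)$ is currently assumed.

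For $i)\Rightarrow ii)$ I would first record the pointwise description of type $III$: since $\lambda\subset\lambda^{\perp}$ for a null line, a symmetric $2$-tensor $S$ is type $III$ exactly when it vanishes on $\lambda^{\perp}\times\lambda^{\perp}$ and $S(k,\cdot)\equiv 0$ for $k\in\lambda$. Assuming $X$ nil-Killing I would then deduce $[X,\lambda^{\perp}]\subset\lambda^{\perp}$ from
\begin{equation}
(\mathcal{L}_{X}g)(w,k)=-g([X,w],k)-g(w,[X,k]),\qquad w\in\lambda^{\perp},\ k\in\lambda,
\end{equation}
obtained using $g(w,k)=0$. The last term vanishes because $[X,k]\in\lambda$, and the left-hand side vanishes because $\mathcal{L}_{X}g$ is type $III$ with $w,k\in\lambda^{\perp}$; hence $g([X,w],k)=0$ for all $k\in\lambda$, giving $[X,w]\in\lambda^{\perp}$. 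With $X$ now preserving algebraic type by Observation \ref{algebraic preservation}, and $g$ being type $II$, Proposition \ref{TypeIIIflowoftensor} applies to $T=g$ and yields $\phi_t^{*}g-g$ of type $III$ for all $t$, which is $ii)$. The converse $ii)\Rightarrow i)$ is immediate: the type $III$ tensors at a point form a linear, hence closed, subspace, and $\mathcal{L}_{X}g=\lim_{t\to 0}t^{-1}(\phi_t^{*}g-g)$ is a limit of type $III$ tensors, so it is itself type $III$.

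For $ii)\Leftrightarrow iii)$ I would fix $p$ and $t$ and pass to the linear map $f:=(\phi_t)_{*,p}\colon T_pM\to T_{\phi_t(p)}M$, which satisfies $f(\lambda_p)=\lambda_{\phi_t(p)}$ by hypothesis and for which $(\phi_t^{*}g)_p=f^{*}g_{\phi_t(p)}$. Under this identification $ii)$ is precisely condition $ii)$ of Proposition \ref{null-transformation} and $iii)$ is precisely its condition $i)$, so it only remains to verify that $f$ preserves algebraic type before invoking that proposition. This again reduces to $f(\lambda_p^{\perp})\subset\lambda_{\phi_t(p)}^{\perp}$, which follows from either assumption: writing $S:=f^{*}g_{\phi_t(p)}-g_p$, from $ii)$ (where $S$ is type $III$) one gets $g(fw,fk)=g_p(w,k)+S(w,k)=0$ for $w\in\lambda_p^{\perp}$, $k\in\lambda_p$, while from $iii)$ one has $g(fx,fw)=g(x,w)=0$ for $x\in\lambda_p$, $w\in\lambda_p^{\perp}$; in both cases $fw$ is orthogonal to the line $f(\lambda_p)=\lambda_{\phi_t(p)}$. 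With algebraic type preserved, the equivalence $i)\Leftrightarrow ii)$ of Proposition \ref{null-transformation} delivers $ii)\Leftrightarrow iii)$ at $p$, and since $p,t$ were arbitrary the equivalence holds wherever the flow is defined.

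The main obstacle is the single point flagged above: none of the cited results can be applied until one knows the flow preserves $\lambda^{\perp}$, and this is not among the hypotheses but must be drawn out of the nil-Killing/type $III$ condition. It is exactly the vanishing of the $(\lambda^{\perp},\lambda)$ component of $\mathcal{L}_{X}g$ forced by type $III$ that makes this possible, so the displayed computation is the genuine content of the argument; the remainder is bookkeeping against Propositions \ref{TypeIIIflowoftensor} and \ref{null-transformation}.
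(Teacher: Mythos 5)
Your proof is correct and follows essentially the route the paper intends, since the paper states the result as an immediate consequence of Proposition \ref{TypeIIIflowoftensor} (applied to $T=g$, giving $i)\Rightarrow ii)$, with $ii)\Rightarrow i)$ from closedness of the type $III$ subspace under limits) together with the pointwise equivalence $i)\Leftrightarrow ii)$ of Proposition \ref{null-transformation} applied to $f=(\phi_t)_{*,p}$ for $ii)\Leftrightarrow iii)$. Your one genuine addition --- deriving $[X,\lambda^{\perp}]\subset\lambda^{\perp}$, resp.\ $(\phi_t)_{*}\lambda_p^{\perp}\subset\lambda_{\phi_t(p)}^{\perp}$, from the type $III$ condition via $(\mathcal{L}_{X}g)(w,k)=-g([X,w],k)-g(w,[X,k])$ --- is precisely the step the paper leaves implicit (its hypothesis supplies only $[X,\lambda]\subset\lambda$, while both cited results require full preservation of algebraic type), and your computation supplies it correctly under the paper's conventions, where a symmetric type $III$ tensor is exactly one vanishing on $\lambda^{\perp}\times\lambda^{\perp}$ with $S(k,\cdot)=0$ for $k\in\lambda$.
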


Let us discuss how the flows of nil-Killing vector fields act with respect to the spi's of type $II$ tensors. Suppose $p\in M$ and $X\in \mathcal{X}(M)$ with flow $\phi_{t}$ is nil-Killing with respect to $\lambda$ and satisfies $[X,\lambda]\subset \lambda$. If $T$ is a tensor of type $II$, then
\begin{equation}
    (\phi_{t}^{*}T)_{p}=(\phi_{t})^{*}T_{\phi_{t}(p)},
\end{equation}
and since $\phi_{t}$ preserves algebraic type and  $\phi_{t}^*g-g$ is of type $III$ by the above corollary, it follows from proposition \ref{null-transformation} that
the corresponding spi's for $T_{\phi_{t}(p)}$ and $(\phi_{t}^{*}T)_{p}.$ have the same values for each $t.$ In particular if $X(p)=0,$ then\begin{equation}
    spi(T_{p})=spi((\phi_{t}^{*}T)_{p}),
\end{equation}
for all $t.$

Hence nil-Killing vector fields which are zero at a point $p$ allow for deformations of tensors such that the spi's at the point $p$ remain unchanged.

Now we give criteria for the spi's of a type $II$ tensor to stay constant along the integral curves of a vector field. 

\begin{proposition}\label{SpiPreservationOfTensor}
Let $(M,g,\lambda)$ be a Lorentzian manifold with a null-distribution and $p\in M$. Suppose that $T$ is a tensor and $X$ is a vector field such that $T_{p}$ is of type $II$ and $(\mathcal{L}_{X}g)_{p}$, $(\mathcal{L}_{X}T)_{p}$ are of type  $III$ with respect to $\lambda_{p}$. Then \begin{equation}
    X_{p}(\mathcal{I})=0,
\end{equation}
for all spi's $\mathcal{I}$ of $T.$
\end{proposition}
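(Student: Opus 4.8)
The plan is to compute the directional derivative $X_{p}(\mathcal{I})$ infinitesimally, using the Lie derivative to reduce it to a sum of full contractions, each of which pairs a single type $III$ factor against type $II$ factors, and then to invoke the boost-weight bookkeeping of Section 2 to see that every such contraction vanishes at $p$. Since the hypotheses are purely pointwise at $p$, I deliberately avoid the flow-based Proposition \ref{TypeIIIflowoftensor} and Corollary \ref{NilFlow} (which require preservation of algebraic type on a neighbourhood) and instead work directly at the single point $p$.

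First I would write an arbitrary spi $\mathcal{I}$ of $T$ as a full contraction $\mathcal{I}=Tr(P)$, where, exactly as in the proof of Proposition \ref{null-transformation}, $P$ is an even-ranked tensor obtained as a tensor product of finitely many copies of $T$, $g$ and $g^{-1}$. Since $\mathcal{I}$ is a smooth function, $X(\mathcal{I})=\mathcal{L}_{X}\mathcal{I}$, and because the Lie derivative commutes with contraction and obeys the Leibniz rule over tensor products, I obtain
\begin{equation}
X(\mathcal{I})=Tr(\mathcal{L}_{X}P)=\sum_{j}Tr\big(F_{1}\otimes\cdots\otimes \mathcal{L}_{X}F_{j}\otimes\cdots\otimes F_{N}\big),
\end{equation}
where each factor $F_{i}\in\{T,g,g^{-1}\}$. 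This is an identity of functions, so its value at $p$ uses only the pointwise data $T_{p},g_{p},g^{-1}_{p}$ together with $(\mathcal{L}_{X}T)_{p},(\mathcal{L}_{X}g)_{p}$ and $(\mathcal{L}_{X}g^{-1})_{p}$; thus the pointwise hypotheses suffice.

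Next I would pin down the algebraic type of each factor at $p$. The metric $g$ and its inverse $g^{-1}$ are invariant under boosts and hence carry only a boost-weight-zero component, so they are of type $II$; by hypothesis $T_{p}$ is of type $II$, while $(\mathcal{L}_{X}T)_{p}$ and $(\mathcal{L}_{X}g)_{p}$ are of type $III$. The only factor still to be classified is $\mathcal{L}_{X}g^{-1}$, which I would treat via $\mathcal{L}_{X}g^{-1}=-g^{-1}(\mathcal{L}_{X}g)g^{-1}$, i.e. as $-\mathcal{L}_{X}g$ with both indices raised. Since $g^{-1}$ is boost-weight zero, the product rule $(S\otimes T)_{s}=\sum_{s_{1}+s_{2}=s}S_{s_{1}}\otimes T_{s_{2}}$ of Section 2 shows that raising an index preserves boost weight, so $(\mathcal{L}_{X}g^{-1})_{p}$ is again of type $III$.

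Finally I would conclude by boost-weight counting. In each summand the tensor product at $p$ has exactly one factor of type $III$ (boost weight $\le -1$) and all remaining factors of type $II$ (boost weight $\le 0$), so by the product rule every component of the product has boost weight $\le -1$; its boost-weight-zero part therefore vanishes, and since $Tr(P)=Tr(P_{0})$ the full contraction of each summand is zero at $p$. Hence $X_{p}(\mathcal{I})=0$. The only genuinely delicate point is verifying that raising indices with $g^{-1}$ preserves type $III$, which is precisely the observation that $g^{-1}$ is boost-weight zero; the rest is the Leibniz rule together with the vanishing of any full contraction containing a single type $III$ factor.
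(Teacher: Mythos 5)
Your proposal is correct and follows essentially the same route as the paper's proof: both express the spi as a full contraction of tensor products of $T$, $g$ and $g^{-1}$, apply the Leibniz rule for $\mathcal{L}_{X}$, establish that $(\mathcal{L}_{X}g^{-1})_{p}$ is of type $III$ via $\mathcal{L}_{X}(g*g^{-1})=0$, and conclude because each summand is a full contraction pairing a single type $III$ factor with type $II$ factors, hence vanishes by boost-weight counting. Your explicit remarks that the argument is purely pointwise and that raising indices with the boost-weight-zero tensor $g^{-1}$ preserves type $III$ merely spell out steps the paper leaves implicit.
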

\begin{proof} Firstly we see that  
\begin{equation}
    0=\mathcal{L}_{X}(g*g^{-1})=(\mathcal{L}_{X}g)*g^{-1}+g*(\mathcal{L}_{X}g^{-1}),
\end{equation}
and hence
\begin{equation}
    (\mathcal{L}_{X}g^{-1})_{p}=g_{p}^{-1}*(\mathcal{L}_{X}g)_{p}*g_{p}^{-1},
\end{equation}
which implies that $(\mathcal{L}_{X}g^{-1})_{p}$ is of type $III.$

Any spi, $I,$ of $T$ can be written as some full contraction
\begin{equation}
   \mathcal{I}=(g\otimes \cdots \otimes g\otimes g^{-1}\otimes \cdots \otimes g^{-1})*(T\otimes \cdots \otimes T).
\end{equation}
Therefore using the Leibniz rule for the Lie derivative  $X_{p}(\mathcal{I})=(\mathcal{L}_{X}(\mathcal{I}))_{p}$ on the expression above, we see that each resulting summand is a full contraction between tensors of type $II$ and $III$ which therefore must vanish. Hence
\begin{equation}
    X_{p}(\mathcal{I})=0,
\end{equation}
proving the proposition.
\end{proof}

\begin{corollary}
Let $(M,g,\lambda)$ be a Lorentzian manifold with a null distribution and suppose that $M$ has a transitive collection $\{X_{i}\}_{i\in I}$ of vector fields that are nil-Killing w.r.t. $\lambda.$ If $T$ is a tensor of type $II$ such that  $\mathcal{L}_{X_{i}}T$ is of type $III$, for all $i\in I$, then the spi's of $T$ are constant on $M.$
\end{corollary}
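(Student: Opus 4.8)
The corollary follows by applying the preceding proposition pointwise and exploiting transitivity. The plan is to show that $X_p(\mathcal{I}) = 0$ for every spi $\mathcal{I}$ of $T$, at every point $p \in M$ and for every vector field $X_i$ in the transitive collection; then connectedness together with transitivity forces each spi to be constant.

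First I would fix an arbitrary point $p \in M$ and an arbitrary index $i \in I$. The hypotheses supply exactly what Proposition \ref{SpiPreservationOfTensor} requires: $T$ is of type $II$ (hence $T_p$ is of type $II$), $X_i$ is nil-Killing with respect to $\lambda$ so that $(\mathcal{L}_{X_i} g)_p$ is of type $III$, and by assumption $\mathcal{L}_{X_i} T$ is of type $III$ so that $(\mathcal{L}_{X_i} T)_p$ is of type $III$. Applying the proposition therefore yields
\begin{equation}
    (X_i)_p(\mathcal{I}) = 0
\end{equation}
for every spi $\mathcal{I}$ of $T$. Since $p$ and $i$ were arbitrary, each spi $\mathcal{I}$ is annihilated by every vector field in the collection $\{X_i\}_{i \in I}$ at every point of $M$.

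Next I would convert this pointwise vanishing of directional derivatives into global constancy. The word \emph{transitive} is understood to mean that the vectors $\{(X_i)_p\}_{i \in I}$ span $T_p M$ at each $p \in M$; equivalently, the distribution generated by the collection is all of $TM$. Consequently, for each $p$ the differential $d\mathcal{I}_p$ annihilates a spanning set of $T_p M$ and hence vanishes identically, giving $d\mathcal{I} = 0$ on all of $M$. Provided $M$ is connected (as is standard for a spacetime), a smooth function with vanishing differential is constant, so each spi $\mathcal{I}$ is constant on $M$, which is the claim.

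The only subtlety — and the step I would flag as requiring care rather than genuine difficulty — is the precise reading of ``transitive.'' If the intended meaning is merely that the $X_i$ act transitively (i.e.\ their flows move any point to any other along a path tangent to the collection) rather than pointwise spanning the tangent space, then I would instead argue that $\mathcal{I}$ is constant along each integral curve of each $X_i$ (since $X_i(\mathcal{I}) = 0$), and then chain such curves together using transitivity to connect any two points by a concatenation of integral curves of fields in the collection, along each of which $\mathcal{I}$ is constant. Either interpretation delivers constancy; the pointwise-spanning version is cleaner and is almost certainly what is meant, so I would adopt it and note the equivalence only if space permits.
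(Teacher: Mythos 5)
Your proposal is correct and is exactly the argument the paper intends: the corollary is stated without proof precisely because it follows by applying Proposition \ref{SpiPreservationOfTensor} pointwise to each $X_i$ and using transitivity (pointwise spanning of each tangent space, which is indeed the paper's usage of the term) together with connectedness to conclude $d\mathcal{I}=0$ forces constancy. Your handling of the two readings of \emph{transitive} and the connectedness caveat is careful but does not change the route.
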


\section{Lie algebras induced by algebraic type}\label{Lie algebra}

In the previous sections we saw the utility of studying vector fields preserving algebraic type and whose Lie derivative brings the metric $g$ and a given type $II$ tensor $T$ to algebraic type $III$. Namely this gives a useful criterion for when the spi's of $T$ are constant along the integral curves of $X.$ 

In this section we shall generalize this discussion in two ways. We shall let the algebraic type be given by any choice of boost-order, and instead of a single tensor we shall consider collections of tensors. Our main result shall be on the construction of Lie algebras specified by this datum. We follow up with a discussion of interesting cases.

Let $(M,g,\lambda)$ be a Lorentzian manifold with a null-distribution $\lambda$. Now suppose we are given a collection of tensors $\{T_{i}\}_{i\in I}$ and an integer $s.$ We have the following generalization of proposition 2.5 in \cite{IDIFF}:
\newline

\begin{proposition}\label{AlgebraicStructureLiealgebra}
The collection of vector fields $X\in \mathcal{X}(M)$ satisfying \begin{enumerate}[i)] \item$ [X,\lambda]\subset \lambda,$ 
\item $[X,\lambda^{\perp}]\subset \lambda^{\perp},$
\item $\mathcal{L}_{X}T_{i}$ has boost-order $\leq s$ w.r.t $\lambda,$ for all $i\in I,$
\end{enumerate}
is a Lie algebra which we denote by $\mathfrak{g}_{\{T_{i}\}_{i\in I}}^s$.
\end{proposition}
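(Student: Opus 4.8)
The plan is to verify the two defining features of a Lie algebra separately: that the given collection is an $\mathbb{R}$-linear subspace of $\mathcal{X}(M)$, and that it is closed under the Lie bracket. Conditions (i) and (ii) are handled by formal manipulation (linearity and the Jacobi identity), so the genuine content lies in condition (iii). That step rests on a lemma asserting that a vector field preserving algebraic type induces a Lie derivative which respects the boost-order filtration, and I expect this lemma to be the main obstacle.

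First I would dispose of the vector-space claim. For a fixed section $Z$ of $\lambda$ and real constants $a,b$ the bracket satisfies $[aX+bY,Z]=a[X,Z]+b[Y,Z]$, so (i) and (ii) are preserved under real-linear combinations; the same linearity of $\mathcal{L}$, together with the fact that the tensors of boost-order $\leq s$ form the vector space $\mathcal{T}_{k,s}$ of Section 2, handles (iii) via $\mathcal{L}_{aX+bY}T_i=a\mathcal{L}_X T_i+b\mathcal{L}_Y T_i$. This shows the collection is a subspace of $\mathcal{X}(M)$.

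Next comes the key lemma: if $X$ preserves algebraic type, then $\mathcal{L}_X$ maps tensors of boost-order $\leq r$ to tensors of boost-order $\leq r$, for every $r$. To prove this I would use the flow $\phi_t$ of $X$, which preserves algebraic type for all $t$ by Observation \ref{algebraic preservation} since $X$ satisfies (i) and (ii); in particular $\phi_t^*$ preserves boost-order pointwise with respect to $\lambda$. For a tensor $S$ of boost-order $\leq r$, each $(\phi_t^*S)_p$ therefore lies in the fixed finite-dimensional subspace $\mathcal{T}_{k,r}(T_p M)$. Writing $(\mathcal{L}_X S)_p=\lim_{t\to 0}t^{-1}((\phi_t^*S)_p-S_p)$ exhibits it as a limit of elements of this subspace, and since $\mathcal{T}_{k,r}(T_p M)$ is a closed (linear) subspace the limit remains in it. Hence $\mathcal{L}_X S$ has boost-order $\leq r$. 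The subtlety here is exactly to argue at the level of the single tangent space $T_p M$ that the difference quotient never leaves $\mathcal{T}_{k,r}(T_p M)$, which is what licenses passage to the limit.

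Finally I would close under the bracket. For (i) and (ii) I would invoke the Jacobi identity: given a section $Z$ of $\lambda$ we have $[[X,Y],Z]=[X,[Y,Z]]-[Y,[X,Z]]$, and each term lies in $\lambda$ because $X$ and $Y$ individually preserve $\lambda$; the identical argument with $\lambda^{\perp}$ gives (ii). For (iii) I would use the operator identity $\mathcal{L}_{[X,Y]}=\mathcal{L}_X\mathcal{L}_Y-\mathcal{L}_Y\mathcal{L}_X$, giving $\mathcal{L}_{[X,Y]}T_i=\mathcal{L}_X(\mathcal{L}_Y T_i)-\mathcal{L}_Y(\mathcal{L}_X T_i)$. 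Both $\mathcal{L}_Y T_i$ and $\mathcal{L}_X T_i$ have boost-order $\leq s$ by hypothesis, and since $X,Y$ preserve algebraic type the lemma guarantees that applying $\mathcal{L}_X$ or $\mathcal{L}_Y$ keeps the boost-order $\leq s$; the difference of two such tensors again has boost-order $\leq s$. This verifies (iii) for $[X,Y]$ and completes the argument.
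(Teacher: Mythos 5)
Your proposal is correct and follows essentially the same route as the paper's proof: the Jacobi identity for conditions (i) and (ii), the operator identity $\mathcal{L}_{[X,Y]}=\mathcal{L}_X\mathcal{L}_Y-\mathcal{L}_Y\mathcal{L}_X$ for (iii), and the same flow-based limit argument (via Observation \ref{algebraic preservation}) showing that $\mathcal{L}_X$ preserves boost-order when $X$ preserves algebraic type. Your only additions — the explicit check that the collection is a linear subspace and the remark that $\mathcal{T}_{k,r}(T_pM)$ is closed so the difference quotients' limit stays inside it — merely spell out details the paper leaves implicit.
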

\begin{proof}
Suppose $X$ and $Y$ are vector fields lying in this collection. First let us verify that $[X,Y]$ satisfies $i),ii).$ This is easy, since by the Bianchi identity we have
\begin{equation}
    [[X,Y],k]=-[[Y,k],X]-[[k,X],Y]\in \lambda
\end{equation}
since $[Y,k],[k,X]\in \lambda,$ and thus $[[X,Y],\lambda]\subset \lambda.$ For the exact same reason we also have $[[X,Y],\lambda^{\perp}]\subset\lambda^{\perp}.$

Now suppose that $i\in I$, then 
\begin{equation}\label{Bianchi}
    \mathcal{L}_{[X,Y]}T_{i}=\mathcal{L}_{X}(\mathcal{L}_{Y}T_{i})-\mathcal{L}_{Y}(\mathcal{L}_{X}T_{i})
\end{equation}
By assumption $\mathcal{L}_{X}T_{i}$ and $\mathcal{L}_{Y}T_{i}$ are of boost-order $\leq s$. Furthermore observation \ref{algebraic preservation} implies that the flows of both vector fields preserve algebraic type with respect to $\lambda.$ Hence, letting $\phi_{t}$ denote the flow of $X$ we have
\begin{equation}
    \mathcal{L}_{X}(\mathcal{L}_{Y}T_{i})=\lim_{t\rightarrow 0}\frac{1}{t}(\phi_{t}^{*}(\mathcal{L}_{Y}T_{i})-\mathcal{L}_{Y}T_{i}),
\end{equation}
has boost order $\leq s,$ since it is the limit of tensors of boost-order $\leq s.$ For the same reason $\mathcal{L}_{Y}(\mathcal{L}_{X}T_{i})$ has boost-order $\leq s.$ Thus \eqref{Bianchi} shows that $[X,Y]$ is in the collection. Hence the collection of such tensors give a Lie algebra.
\end{proof}

We proceed by giving a few examples of such Lie algebras that will be important to us. In the following let $(M,g,\lambda)$ be a Lorentzian manifold with a null-distribution
\begin{enumerate}
\item The Lie algebra of nil-Killing vector fields which preserve algebraic structure, as descibed in section \ref{DefOfTensors}, is obtained by letting the collection of tensors be given by $\{g\}$ and setting $s=-1.$
\item Taking the collection to be $\{g\}\cup\{T_{i}\}_{i\in I}$ where $T_{i}$ is of type $II$ w.r.t. $\lambda,$ for all $i\in I,$ and setting $s=-1,$ then by proposition \ref{SpiPreservationOfTensor} the vector fields in the resulting Lie algebra preserve the spi's generated by the collection $\{T_{i}\}_{i\in I}$.
\item In the particular case that the curvature and all its covariant derivative are of type $II$ w.r.t $\lambda$ then $s=-1$ together with the collection $\{g,\nabla^{m}Rm\}$  induces a Lie algebra whose vector fields preserve the scalar curvature invariants of the metric.
\end{enumerate}

Now consider again a collection $\{T_i\}_{i\in I}$ and an integer $s.$ Let
\begin{equation}
\mathcal{G}^{s}_{\{T_{i}\}_{i\in I}}=\{\phi\in \text{Diff}(M):\phi_*(\lambda)\subset \lambda,\,\phi_*(\lambda^\perp)\subset\lambda^\perp,\,\phi^*(T_{i})-T_i \text{ has b.o. } \leq s, \forall i\in I\}.
\end{equation}
Let us show that $\mathcal{G}^{s}_{\{T_{i}\}_{i\in I}}$ is a group. Suppose that $\phi,\psi\in \mathcal{G}$, the obviously their composition $\phi\psi$ preserves the distribution $\lambda$ and $\lambda^\perp$ since they indidually have this property. Suppose that $i\in I,$ then
$\phi^*(T_{i})-T_{i}$ is of boost-order $\leq s.$ Since $\psi$ preserves the distributions it follows that $\psi^{*}(\phi^*(T_{i})-T_{i})=(\phi\psi)^*(T_{i})-\psi^*(T_{i})$ is of boost-order $\leq  s.$ Moreover
\begin{equation}
    (\phi\psi)^*(T_{i})-\psi^*(T_{i})=[(\phi\psi)^*(T_{i})-T_{i}]-[\psi^*(T_{i})-T_{i}],
\end{equation}
from which it follows that $(\phi\psi)^*(T_{i})-T_{i}$ has boost-order $\leq s$, and so $\phi\psi \in \mathcal{G}^{s}_{\{T_{i}\}_{i\in I}}. $ Now let us show closure under taking inverses. If $\psi\in \mathcal{G}^{s}_{\{T_{i}\}_{i\in I}}$, then clearly $\psi^{-1}$ preserves algebraic structure. By assumtion, if $i\in I$ then
\begin{equation}
    \psi^{*}((\psi^{-1})^*T_{i}-T_{i})=T_{i}-\psi^{*}T_{i},
\end{equation}
is of boost-order $\leq s.$ Since $\psi^{-1}$ preserves algebraic structure, we see that 
\begin{equation}
    (\psi^{-1})^*[\psi^{*}((\psi^{-1})^*T_{i}-T_{i})] =(\psi^{-1})^*T_{i}-T_{i}
\end{equation}
must also be of boost-order $\leq s.$ This shows that $\psi^{-1}\in \mathcal{G}^{s}_{\{T_{i}\}_{i\in I}}$ showing that the set is a group.

 Given an integer $s$ and a collection of tensors $\{T_{i}\}_{i\in I}$, then following the steps in section \ref{DefOfTensors}, we see that a vector field $X$ with flow $\phi_t$ is in the the Lie algebra $\mathfrak{g}_{\{T_{i}\}_{i\in I}}^s$ iff. $\phi_{t}\in \mathcal{G}^{s}_{\{T_{i}\}_{i\in I}},$ for all $t.$

Thus we can think of $\mathfrak{g}_{\{T_{i}\}_{i\in I}}^s$ as the Lie algebra of the group $\mathcal{G}^{s}_{\{T_{i}\}_{i\in I}}.$
\section{Local description of nil-Killing vector fields}

In this section we give a coordinate description of Nil-Killing vector fields on a Lorentzian manifold admitting a twist-free, geodesic null-congruence.

Let $(M,g)$ be a Lorentzian space-time with a twist-free and geodesic null vector field $k$. By rescaling $k$ if necessary we can find local coordinates $(u,v,x^{i})$ with $i=1\dots n-2$ such that $k=\frac{\partial}{\partial v}$ with metric dual $k^\natural=du$ and
\begin{equation}
    g= 2du(dv + Hdu +W_{i}dx^{i}) + \tilde{g}_{ij}(u,v,x^{k})dx^{i}dx^{j}.
\end{equation}

Let $\lambda$ be the null-distribution, which at each point is defined to be the span of $k.$
Now suppose that $X=\mathcal{X}(M)$ is nil-Killing with respect to $\lambda$, then since $\mathcal{L}_{X}g(k,k)=0,$ we see that $[X,\lambda]\in \{\lambda\}^{\perp},$ and thus $X$ expressed in coordinates is of the form
\begin{equation}
    X:=A(u,x^{k})\frac{\partial}{\partial u} + B(u,v,x^{k})\frac{\partial}{\partial v} + C^{i}(u,v,x^{k})\frac{\partial}{\partial x^{i}},
\end{equation}
where $A$ does not depend on $v.$ 

Now calculating the Lie derivative of the metric with respect to $X$ gives us

\begin{equation}
    \mathcal{L}_{X}g=(\mathcal{L}_{X}g)_{II} + (\mathcal{L}_{X}g)_{III}+(\mathcal{L}_{X}g)_{r},
\end{equation}
where
\begin{equation}\
    (\mathcal{L}_{X}g)_{II} := 2(X(H) + 2H\frac{\partial A}{\partial u} + \frac{\partial B}{\partial u} + W_{i}\frac{\partial C^{i} }{\partial u})dudu,
\end{equation}
\begin{equation}\label{coordinateordertwonil}
(\mathcal{L}_{X}g)_{III}:=  2(X(W_{i}) + 2H\frac{\partial A }{\partial x^{i}}+\frac{\partial B}{\partial x^{i}} + W_{j}\frac{\partial C^{j}}{\partial x^{i}} +W_{i}\frac{\partial A}{\partial u} + \tilde{g}_{ij}\frac{\partial C^{j}}{\partial u})dudx^{i},
\end{equation}
and
\begin{equation}\label{CoordinateNilkilling}
(\mathcal{L}_{X}g)_{r}:=2(\frac{\partial A}{\partial u} + \frac{\partial B}{\partial v}+ W_{i}\frac{\partial C^{i}}{\partial v})dudv+2(\frac{\partial A}{\partial x^{i}}+ \tilde{g}_{ij}\frac{\partial C^{j}}{\partial v})dx^{i}dv
\end{equation}
$$+(C^{k}\frac{\partial \tilde{g}_{ij}}{\partial x^{k}} + \tilde{g}_{jk}\frac{\partial C^{k}}{\partial x^{i}}+\tilde{g}_{ik}\frac{\partial C^{k}}{\partial x^{j}} + A\frac{\partial \tilde{g}_{ij}}{\partial u}+B\frac{\partial \tilde{g}_{ij}}{\partial v} + W_{i}\frac{\partial A}{\partial x^{j}}+W_{j}\frac{\partial A}{\partial x^{i}})dx^{i}dx^{j}.$$

Hence we see that a vector field $$X:=A\frac{\partial}{\partial u} + B\frac{\partial}{\partial v} + C^{i}\frac{\partial}{\partial x^{i}},$$ is Nil-Killing [of order two] w.r.t. $\lambda$ iff.
$$\frac{\partial A}{\partial v}$$ and
$$(\mathcal{L}_{X}g)_{r}=[(\mathcal{L}_{X}g)_{III}]=0.$$ 
\newline


Let us proceed by classifying the collection \begin{equation}\hat{\mathcal{N}}_{\lambda}=\{Y\in \mathcal{X}(M):Y\text{ is Nil-Killing w.r.t. } \lambda,[Y,\lambda]\subset \lambda\}\end{equation} in terms of coordinates. 

Suppose that $X$ is Nil-Killing vector fields $X$ such that $[X,\lambda]\subset \lambda$. This is equivalent to  $(\mathcal{L}_{X}g)_{r}=0$ and $\frac{\partial A}{\partial v}=\frac{\partial C^{i}}{\partial v}=0 $ for $i=1\dots n-2$. By inspection these equations are equivalent to
\begin{equation}
    \frac{\partial A}{\partial v}=\frac{\partial C^{i}}{\partial v}=0,
\end{equation}
\begin{equation}
    \frac{\partial B}{\partial v}=-\frac{\partial A}{\partial u},
\end{equation}
\begin{equation}
    \frac{\partial A}{\partial x^{i}}=0,
\end{equation}
and
\begin{equation}\label{Ceq}
    C^{k}\frac{\partial \tilde{g}_{ij}}{\partial x^{k}} + \tilde{g}_{jk}\frac{\partial C^{k}}{\partial x^{i}}+\tilde{g}_{ik}\frac{\partial C^{k}}{\partial x^{j}} + A\frac{\partial \tilde{g}_{ij}}{\partial u}+B\frac{\partial \tilde{g}_{ij}}{\partial v}=0.
\end{equation}
In summary we have the following characterization for space-times with a twist-free and geodesic null vector field:
\begin{proposition}\label{NilKillinInCoordinates}
Given a metric
\begin{equation}
    g:=2du(dv+Hdu+W_{i}dx^{i})+\tilde{g}_{ij}(u,v,x^{k})dx^{i}dx^{j}
\end{equation}with a twist-free and geodesic null-vector field $\frac{\partial}{\partial v}$. Let $\lambda$ be the null distribution spanned by $\frac{\partial  }{\partial v}.$ The collection $\hat{\mathcal{N}}_{\lambda}$ of vector fields $X$ that are nil-Killing with respect to $\lambda$ and that satisfy $[X,\lambda]\subset \lambda$ is given by
    \begin{equation}
        X=A(u)\frac{\partial}{\partial u}+(-v\frac{\partial A}{\partial u}(u)+B(u,x^{k}))\frac{\partial}{\partial v}+C^{i}(u,x^{k})\frac{\partial}{\partial x^{i}},
    \end{equation}
    such that the functions  $A(u)$, $C^{i}(u,x^{k})$  and $B(u,x^{k})$ satisfy \eqref{Ceq}. 
    \end{proposition}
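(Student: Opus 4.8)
The plan is to read the proposition off as the explicit solution of the first-order system isolated just above the statement. Starting from a general $X=A\,\partial_u+B\,\partial_v+C^i\,\partial_{x^i}$, the preceding reduction of $\mathcal{L}_X g$ in coordinates (leading to \eqref{CoordinateNilkilling}) tells us that $X\in\hat{\mathcal N}_\lambda$ if and only if the four conditions hold simultaneously: $\partial_v A=\partial_v C^i=0$, $\partial_v B=-\partial_u A$, $\partial_{x^i}A=0$, and the transverse equation \eqref{Ceq}. So the whole task is to integrate this system for the unknown functions, read off the general $X$, and then verify both directions of the equivalence.

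First I would dispose of the equations that do not involve $B$: the pair $\partial_v A=0$ and $\partial_{x^i}A=0$ forces $A$ to be a function of $u$ alone, $A=A(u)$, while $\partial_v C^i=0$ gives $C^i=C^i(u,x^k)$. With $A=A(u)$ in hand, the equation $\partial_v B=-\partial_u A$ has a right-hand side $-\partial_u A$ independent of $v$, so integrating in $v$ yields $B(u,v,x^k)=-v\,\partial_u A+B_0(u,x^k)$ with $B_0$ a $v$-independent function that I relabel $B(u,x^k)$; this is exactly the asserted coordinate form of $X$. For the converse I would simply observe that any $X$ written in that form satisfies the first three equations by inspection, so it lies in $\hat{\mathcal N}_\lambda$ precisely when it also obeys \eqref{Ceq}. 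The one equation not yet consumed, namely \eqref{Ceq} with $A=A(u)$, $C^i=C^i(u,x^k)$ and $B=-v\,\partial_u A+B(u,x^k)$ substituted, is then the stated constraint relating $A$, $B$, $C^i$ to the transverse metric $\tilde g_{ij}$, and the equivalence is closed.

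The genuinely delicate work sits upstream of this proposition, in the coordinate evaluation of $\mathcal{L}_X g$ and the determination of which components must vanish for $\mathcal{L}_X g$ to be of type $III$ — keeping in mind that type $III$ permits the negative boost-weight pieces to survive, so the collapse of the nil-Killing condition together with $[X,\lambda]\subset\lambda$ down to \eqref{Ceq} plus the three simple equations is the input I am taking as given from the discussion preceding the statement. Within the proposition itself the integration is routine; the single point I would watch is that $v$ enters only linearly, through the term $-v\,\partial_u A$ in $B$, and that \eqref{Ceq} must hold identically in $v$ (both $B$ and $\tilde g_{ij}$ carry $v$-dependence), so that solving the system produces no further integrability condition beyond \eqref{Ceq} itself.
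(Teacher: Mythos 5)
Your proposal is correct and takes essentially the same route as the paper: the paper offers no separate proof environment for this proposition precisely because its proof is the preceding coordinate computation of $\mathcal{L}_{X}g$, the reduction of the nil-Killing condition together with $[X,\lambda]\subset\lambda$ to the four first-order equations $\partial_v A=\partial_v C^{i}=0$, $\partial_v B=-\partial_u A$, $\partial_{x^i}A=0$ and \eqref{Ceq}, followed by exactly the elementary integration in $v$ that you perform. Your closing observation, that \eqref{Ceq} must hold identically in $v$ (since both the integrated $B=-v\,\partial_u A+B(u,x^k)$ and $\tilde{g}_{ij}(u,v,x^k)$ may carry $v$-dependence) so that no further integrability condition arises, is a correct and welcome clarification of a point the paper leaves implicit.
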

    \section{Characterizations for Kundt spacetimes}
    In this section we shall discuss Kundt spacetimes \cite{KundtSpacetimes,GenKundt,ExactSolutions} and relate them to the concept of nil-Killing vector fields. We give a characterization of Kundt spacetimes having a locally homogeneous transverse metric. Following this we find a helpful classification of degenerate Kundt spacetimes which follows readily from results in \cite{KundtSpacetimes}. 
    
    We start by introducing some definitions. Let $(M,g,\lambda)$ be a Lorentzian manifold along with a rank $1$ null-distribution $\lambda.$ We shall say that the triple is \emph{Kundt} if the distribution $\lambda^{\perp}$ is integrable and at each point $p\in M$ we can find a local vector field $k\in \lambda$ which is shear-free, affinely geodesic and divergence-free, i.e.,
    \begin{equation}\label{nil-Killing}
        \nabla_{(a}k_{b)}\nabla^{a}k^{b}=0,\quad\nabla_{k}k=0\,\text{ and }\, \nabla_{a}k^{a}=0, 
    \end{equation}
    respectively. If $(M,g,\lambda)$ is Kundt we shall say that a local vector field $k\in \lambda$ is Kundt if it satisfies the conditions \eqref{nil-Killing}. We have the following characterization:
    
    \begin{proposition}\label{KundtChar}
Let $(M,g)$ be a Lorentzian manifold and $k$ a null vector field with flow $\phi_{t}$. The following are equivalent:
\begin{enumerate}[i)]
    \item $k$ is affinely geodesic, shear-free and divergence-free.
    \item $k$ is nil-Killing with respect to $k$.
    \item $(\phi_{t})_{*}g-g$ is of type $III$ with respect to $k,$ for all $t.$
    \item $(\phi_{t})_{*}$ is an isometry on $(\lambda^{\perp})_{p}$ and satisfies $g({(\phi_{t})}_{*}k,{(\phi_{t})}_{*}z)=g(k,z),$ for all $p\in M$ and $z\in T_{p}M.$
\end{enumerate}
\end{proposition}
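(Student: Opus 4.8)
The plan is to prove the chain $i)\Leftrightarrow ii)$ by a direct frame computation, and then to obtain $ii)\Leftrightarrow iii)\Leftrightarrow iv)$ as a special case of Corollary \ref{NilFlow}. The bridge between the two halves is the trivial observation that the flow of $k$ satisfies $[k,\lambda]\subset\lambda$, where $\lambda$ is the rank-one null distribution spanned by $k$, since $[k,k]=0$; thus the hypothesis of Corollary \ref{NilFlow} is met with $X=k$ and this choice of $\lambda$. The only genuinely new content is the equivalence $i)\Leftrightarrow ii)$ between the optical conditions and the algebraic type of $\mathcal{L}_{k}g$.

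For $i)\Leftrightarrow ii)$ I would introduce a null frame $\{k,\ell,m_{i}\}$ with $g(k,\ell)=1$, $g(m_{i},m_{j})=\delta_{ij}$ and all other products zero, so that $\{k,m_{i}\}$ spans $\lambda^{\perp}$. Writing $S:=\mathcal{L}_{k}g$, the identity $S(X,Y)=g(\nabla_{X}k,Y)+g(X,\nabla_{Y}k)$ lets me read off the boost-weight components of $S$ with respect to $k$. The key computations are: $S(k,k)=k\big(g(k,k)\big)=0$ automatically, so the boost-weight $+2$ part always vanishes; $S(k,m_{i})=g(\nabla_{k}k,m_{i})$, since $g(k,\nabla_{m_{i}}k)=\tfrac12 m_{i}(g(k,k))=0$, so the boost-weight $+1$ part measures the screen component of $\nabla_{k}k$; $S(k,\ell)=g(\nabla_{k}k,\ell)$ measures the inaffinity; and $S(m_{i},m_{j})=2\nabla_{(i}k_{j)}$ is twice the symmetric screen derivative of $k$, whose trace is the expansion and whose trace-free part is the shear. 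Being of type $III$ is precisely the vanishing of all these boost-weight $\geq 0$ components.

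With these identities in hand the two implications are short. For $i)\Rightarrow ii)$, affine geodesy $\nabla_{k}k=0$ kills the boost-weight $+2$, $+1$ and $0$ components, after which $\nabla_{(a}k_{b)}\nabla^{a}k^{b}=\tfrac14\sum_{i,j}S(m_{i},m_{j})^{2}$ (a sum of squares once the mixed and double-$\ell$ terms drop out), so shear-freeness forces $S(m_{i},m_{j})=0$ and hence $S$ is type $III$. For $ii)\Rightarrow i)$, type $III$ gives $\nabla_{k}k=0$: writing $\nabla_{k}k=ak+b\ell+c^{i}m_{i}$, nullity forces $b=g(\nabla_{k}k,k)=0$, the boost-weight $+1$ part forces $c^{i}=S(k,m_{i})=0$, and the boost-weight $0$ part forces $a=S(k,\ell)=0$; shear-freeness then follows from $S(m_{i},m_{j})=0$, and divergence-freeness follows from $\nabla_{a}k^{a}=\tfrac12\,\delta^{ij}S(m_{i},m_{j})$.

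Finally, $ii)\Leftrightarrow iii)\Leftrightarrow iv)$ is Corollary \ref{NilFlow} applied to $X=k$ and $\lambda=\mathrm{span}(k)$: condition $ii)$ is nil-Killing with respect to $\lambda$, condition $iv)$ is literally the third item of that corollary, and condition $iii)$ differs from its second item only in using the push-forward $(\phi_{t})_{*}$ in place of the pull-back $\phi_{t}^{*}$. Since $(\phi_{t})_{*}=(\phi_{-t})^{*}$ and $\phi_{-t}$ is again the flow of $k$ (still satisfying $[k,\lambda]\subset\lambda$), the statement ``$\phi_{s}^{*}g-g$ is of type $III$ for all $s$'' is insensitive to $s\mapsto -s$, so $iii)$ coincides with the corollary's second item. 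The hard part will be the frame bookkeeping in $i)\Leftrightarrow ii)$, in particular verifying that the boost-weight $+2$ component vanishes automatically and that under affine geodesy the expression $\nabla_{(a}k_{b)}\nabla^{a}k^{b}$ reduces to a positive-definite sum over the spacelike screen directions; the remaining equivalences are essentially a citation of Corollary \ref{NilFlow}.
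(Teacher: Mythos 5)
Your proposal is correct and follows essentially the same route as the paper: the equivalence $i)\Leftrightarrow ii)$ via a null-frame computation expressing the acceleration, shear and divergence of $k$ through components of $\mathcal{L}_{k}g$ (your boost-weight-by-boost-weight bookkeeping is just a more explicit version of the paper's identities $\nabla^{a}k_{a}\sim\tensor{(\mathcal{L}_{k}g)}{^a_a}$, $\nabla_{(a}k_{b)}\nabla^{a}k^{b}\sim(\mathcal{L}_{k}g)_{ab}(\mathcal{L}_{k}g)^{ab}$, $k^{a}\nabla_{a}k_{b}=k^{a}(\mathcal{L}_{k}g)_{ab}$), and $ii)\Leftrightarrow iii)\Leftrightarrow iv)$ by citing Corollary \ref{NilFlow} with $X=k$. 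Your extra step reconciling the push-forward $(\phi_{t})_{*}=(\phi_{-t})^{*}$ with the pull-back in the corollary is a detail the paper passes over silently, and is a welcome clarification rather than a deviation.
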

\begin{proof}
$i)\Leftrightarrow ii)$ The divergence, sheer and acceleration of $k$ is given by
\begin{equation}
    \nabla^{a}k_{a}=\tensor{(\mathcal{L}_{k}g)}{^{a}_{a}},\quad \nabla_{(a}k_{b)}\nabla^{a}k^{b}=(\mathcal{L}_{k}g)_{ab}(\mathcal{L}_{k}g)^{ab}
\end{equation}
and
\begin{equation}
    k^{a}\nabla_{a}k_{b}=k^{a}(\mathcal{L}_{k}g)_{ab}
\end{equation}
respectively. 

If $k$ is nil-Killing with respect to $k,$ then since $\mathcal{L}_{k}g$ is nilpotent with respect to $k$, we see by the above expressions that $k$ must be divergence-free, sheer-free and affinely geodesic.

Conversely, if $k$ is divergence-free, sheerfree and affinely geodesic. Then \begin{equation}k^{a}(\mathcal{L}_{k}g)_{ab}=0,\end{equation} therefore letting $\{k,l,m^{1},\dots,m^{n-2}\}$ be some null frame completing $k$  we can write 
\begin{equation}
    \mathcal{L}_{k}g_{ab}=k_{a}W_{b} + W_{a}k_{b} +\sum_{ij}a_{ij}{m^{i}}_{a}{m^{j}}_{b}
\end{equation}
for some symmetric $(n-2)\times (n-2)$ matrix of function $a_{ij}$. Furthermore we see that
\begin{equation}
    (\mathcal{L}_{k}g)_{ab}(\mathcal{L}_{k}g)^{ab}=\sum_{ij}(a_{ij})^2,
\end{equation}
which by the sheer-free condition implies that $a_{ij}=0.$ Hence $k$ is nil-Killing with respect to $k$.

$ii)\Leftrightarrow iii)$ and $iii)\Leftrightarrow iv).$
 These are just a special cases of corollary \ref{NilFlow}.
\end{proof}

\begin{proposition}\label{TwistLemma}
Suppose that $(M,g,\lambda)$ is a Lorentzian manifold with a null distribution $\lambda$ such that about each point $p\in M$ there is a local vector field $k\in \lambda$ which is nil-Killing w.r.t. $\lambda$. Then the following are equivalent:
\begin{enumerate}[i)]
\item $\lambda^{\perp}$ is an integrable distribution.
\item $\nabla_{\lambda^{\perp}} \lambda\in \lambda$.
\item $\nabla_{\lambda^{\perp}}\lambda^{\perp}\in \lambda^{\perp}$.
\end{enumerate}
 \end{proposition}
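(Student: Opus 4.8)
The plan is to reduce all three conditions to the behaviour of the single $1$-form $k^{\natural}=g(k,\cdot)$ together with the symmetric tensor $\mathcal{L}_{k}g$, where $k$ is a local nil-Killing section of $\lambda$ chosen about the point in question. Because $\lambda$ is a line bundle, each condition is independent of the choice of spanning field: for $X\in\lambda^{\perp}$ one has $\nabla_{X}(fk)=X(f)k+f\nabla_{X}k$, so $\nabla_{X}(fk)\in\lambda$ iff $\nabla_{X}k\in\lambda$, and it therefore suffices to test $ii)$ on $k$ itself. I would first record that $\lambda^{\perp}=\ker k^{\natural}$, and that by Proposition \ref{KundtChar} the field $k$ is null, affinely geodesic ($\nabla_{k}k=0$), shear-free and divergence-free, so that $\mathcal{L}_{k}g$ is of type III. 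The algebraic engine throughout is the Levi-Civita identity $g(\nabla_{X}k,Y)=\tfrac{1}{2}(\mathcal{L}_{k}g)(X,Y)+\tfrac{1}{2}\,dk^{\natural}(X,Y)$, which splits $g(\nabla_{\cdot}k,\cdot)$ into its symmetric part, the Lie derivative, and its antisymmetric part, the exterior derivative.

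I would dispose of $ii)\Leftrightarrow iii)$ first, since it uses neither the nil-Killing hypothesis nor the identity above. Differentiating $g(Y,k)=0$ for $Y\in\lambda^{\perp}$ gives $g(\nabla_{X}Y,k)=-g(Y,\nabla_{X}k)$ for $X\in\lambda^{\perp}$; since $\lambda=(\lambda^{\perp})^{\perp}$, the left-hand side vanishes for all such $Y$ exactly when $\nabla_{X}Y\in\lambda^{\perp}$, while the right-hand side vanishes for all such $Y$ exactly when $\nabla_{X}k\in\lambda$. Thus $ii)$ and $iii)$ are the same system of equations $g(\nabla_{X}k,Y)=0$ for $X,Y\in\lambda^{\perp}$. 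The implication $iii)\Rightarrow i)$ is then immediate from torsion-freeness, since $[X,Y]=\nabla_{X}Y-\nabla_{Y}X\in\lambda^{\perp}$ whenever both terms lie in $\lambda^{\perp}$.

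The substance is $i)\Rightarrow ii)$, and here the nil-Killing hypothesis is essential. Fixing a null frame $\{k,\ell,m_{i}\}$ adapted to $\lambda$, I would note that on $\lambda^{\perp}=\mathrm{span}\{k,m_{i}\}$ the only components of $g(\nabla_{\cdot}k,\cdot)$ that can fail to vanish are $g(\nabla_{m_{i}}k,m_{j})$: the rows involving $k$ die because $\nabla_{k}k=0$ and $g(\nabla_{m_{i}}k,k)=\tfrac{1}{2}m_{i}(g(k,k))=0$. On this screen block the splitting identity reads $g(\nabla_{m_{i}}k,m_{j})=\tfrac{1}{2}(\mathcal{L}_{k}g)(m_{i},m_{j})+\tfrac{1}{2}\,dk^{\natural}(m_{i},m_{j})$, and the type III (shear- and expansion-free) property forces the symmetric term $(\mathcal{L}_{k}g)(m_{i},m_{j})$ to vanish. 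On the other hand $\lambda^{\perp}=\ker k^{\natural}$ is a corank-one distribution, so by Frobenius its integrability is equivalent to $dk^{\natural}(X,Y)=0$ for all $X,Y\in\lambda^{\perp}$, which reduces to $dk^{\natural}(m_{i},m_{j})=0$ since the remaining components again vanish by the geodesic and null properties. Hence $g(\nabla_{m_{i}}k,m_{j})=\tfrac{1}{2}\,dk^{\natural}(m_{i},m_{j})$, so integrability is precisely $\nabla_{m_{i}}k\in\lambda$, giving $ii)$.

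The main obstacle I anticipate is the boost-weight bookkeeping: one must check that type III annihilates exactly the symmetric screen block $(\mathcal{L}_{k}g)(m_{i},m_{j})$ while leaving the $k$-rows unconstrained, and then confirm that those surviving $k$-rows are neutralized instead by the geodesic and null conditions rather than by integrability. Lining up the two independent sources of vanishing---the nil-Killing property for the symmetric part and Frobenius for the antisymmetric part---on one and the same screen block is the crux; the remaining manipulations are the routine metric-duality and torsion-free computations indicated above.
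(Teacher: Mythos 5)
Your proposal is correct and follows essentially the same route as the paper: both arguments use the nil-Killing hypothesis to kill the symmetric screen block of $g(\nabla_{\cdot}k,\cdot)$ (the paper via the frame decomposition $\nabla_{a}k_{b}=k_{a}P_{b}+Q_{a}k_{b}+\sum_{i<j}\alpha_{ij}m^{i}_{[a}m^{j}_{b]}$, you via the split into $\tfrac{1}{2}\mathcal{L}_{k}g+\tfrac{1}{2}dk^{\natural}$), so that integrability of $\lambda^{\perp}$ reduces to vanishing of the antisymmetric (twist) part, which is exactly $\nabla_{\lambda^{\perp}}\lambda\subset\lambda$. Your proof of $ii)\Leftrightarrow iii)$ by differentiating $g(Y,k)=0$ is identical to the paper's, and your only real departure---making the Frobenius criterion $dk^{\natural}\vert_{\ker k^{\natural}}=0$ explicit where the paper implicitly identifies integrability with twist-freeness---is a difference of presentation, not of method.
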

\begin{proof}
$i)\Leftrightarrow ii)$ Let $\{k,l,m^1,\dots,m^{(n-2)}\}$ be a null frame on an open set $U$ such that $k\in \lambda$ and $k$ is nil-Killing with respect to $\lambda.$ Then $\nabla_{(a}k_{b)}=k_{a}U_{b}+{U}_{a}k_{b},$ for some $U\in \lambda^{\perp}$. From this it follows that there exists $P,Q\in \{k\}^{\perp}$ and functions $\alpha_{ij}$ with $1\leq i<j\leq n-2$ such that  $\nabla_{a}k_{b}=k_{a}P_{b}+Q_{a}k_{b} + \sum_{i<j}\alpha_{ij}m^{i}_{[a} m^{j}_{b]}. $ Clearly then 
$$k_{[c}\nabla_{a}k_{b]}=\sum_{i<j}\alpha_{ij}k_{[c}m^{i}_{a}m^{j}_{b]}.$$ From which we see that $\lambda$ is twistfree on $U$ iff. $\alpha_{ij}=0$, for all $i,j.$ It is clear that $\alpha_{ij}$ is zero, for $i<j$ iff. $\nabla_{\lambda^{\perp}}\lambda\subset \lambda.$

$ii)\Leftrightarrow iii)$ Suppose $W,W^{'}\in \{k\}^{\perp}$. Then
$k^{b}W^{a}\nabla_{a}{W^{\prime}}_{b}=-W^{a}{W^{\prime}}_{b}\nabla_{a} k^{b}$, from which the equivalence follows.
\end{proof}
\begin{proposition}\label{Kundttransverse}
Suppose that $(M,g,\lambda)$ is a Kundt spacetime. If $(u,v,x^{k})$ are coordinates on a neighborhood $U\subset M$ such that $k:=\frac{\partial}{\partial v}\in \lambda$ and $k^{\natural}=du,$ then writing
\begin{equation}
    g=2du(dv+Hdu + W_{i}dx^{i})+\tilde{g}_{ij}(u,x^{k})dx^{i}dx^{j},
\end{equation}
for some smooth functions $H$ and $W_{i}$, $i=1,\dots n-2,$ the following are equivalent:
\begin{enumerate}[i)]
\item For each point $p\in U$ there exists $(n-2)$  local space-like vector fields about $p$ that are nil-Killing w.r.t. $\lambda$, belong to $\lambda^{\perp}$ and are linearly independent at $p.$
\item  For any fixed fixed $u,$ the transverse metric $g^{\perp}_{ij}(u,x^{k})dx^{i}dx^{j}$ is locally homogeneous.
\end{enumerate}
Moreover the transverse metric is independent of $u,$ $\frac{\partial}{\partial u}\tilde{g}_{ij}(u,x^{k})=0,$ if and only if $\frac{\partial}{\partial u}$ is nil-Killing w.r.t. $\lambda.$
\end{proposition}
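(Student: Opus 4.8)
The plan is to reduce both the equivalence and the ``Moreover'' clause to the coordinate classification of Proposition \ref{NilKillinInCoordinates}, turning the nil-Killing condition on $\lambda^{\perp}$ into the Killing equation of the transverse metric. First I would identify which elements of $\hat{\mathcal N}_{\lambda}$ lie in $\lambda^{\perp}$: since $g(X,\tfrac{\partial}{\partial v})=A$ for $X=A\tfrac{\partial}{\partial u}+B\tfrac{\partial}{\partial v}+C^{i}\tfrac{\partial}{\partial x^{i}}$, membership in $\lambda^{\perp}$ forces $A=0$, so by Proposition \ref{NilKillinInCoordinates} such a field has the form $X=B(u,x^{k})\tfrac{\partial}{\partial v}+C^{i}(u,x^{k})\tfrac{\partial}{\partial x^{i}}$ with $B$ unconstrained. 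Feeding $A=0$ into \eqref{Ceq} (and using that $\tilde g_{ij}$ is independent of $v$) collapses it to $C^{k}\partial_{x^{k}}\tilde g_{ij}+\tilde g_{jk}\partial_{x^{i}}C^{k}+\tilde g_{ik}\partial_{x^{j}}C^{k}=0$, which is exactly the statement that, for each fixed $u$, the transverse field $\bar C:=C^{i}\partial_{x^{i}}$ is Killing for the transverse metric $\tilde g(u,\cdot)$. Moreover, since the transverse metric is positive definite, $g(X,X)=\tilde g_{ij}C^{i}C^{j}$ shows that $X$ is spacelike iff $\bar C\neq 0$, and a family $\{X_{a}\}$ projects to a linearly independent family of transverse vectors at $p$ precisely when the $X_{a}$ are independent modulo $\lambda$ there. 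This sets up a dictionary: spacelike nil-Killing fields in $\lambda^{\perp}$ that are independent of $\lambda$ correspond bijectively (after fixing $B$, e.g. $B=0$) to nonzero transverse Killing fields.

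With this dictionary the equivalence $i)\Leftrightarrow ii)$ becomes the standard fact that a Riemannian metric is locally homogeneous iff its Killing fields span the tangent space at each point. For $ii)\Rightarrow i)$ I would, at each $p$, choose $n-2$ transverse Killing fields spanning the transverse tangent space and lift them with $B=0$ to spacelike nil-Killing fields in $\lambda^{\perp}$ whose projections are independent at $p$. For $i)\Rightarrow ii)$ I would read off from the given $n-2$ fields their transverse parts $\bar C_{a}$, which are Killing, nonzero (spacelike), and independent at $p$ in the screen space $\lambda^{\perp}/\lambda$; since $i)$ is assumed to hold at every point of $U$, the transverse Killing fields act transitively on each slice $u=\text{const}$, giving local homogeneity.

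For the ``Moreover'' clause I would apply the same classification to $X=\tfrac{\partial}{\partial u}$, i.e. $A=1,\,B=0,\,C^{i}=0$; note $[\tfrac{\partial}{\partial u},\lambda]=0\subset\lambda$ automatically, so $\tfrac{\partial}{\partial u}$ is nil-Killing w.r.t. $\lambda$ iff it lies in $\hat{\mathcal N}_{\lambda}$. Substituting these values, every term of \eqref{Ceq} except $A\,\partial_{u}\tilde g_{ij}$ vanishes, so the nil-Killing condition reduces to $\partial_{u}\tilde g_{ij}=0$; equivalently, the coordinate Lie-derivative expansion gives $(\mathcal L_{\partial_{u}}g)_{r}=\partial_{u}\tilde g_{ij}\,dx^{i}dx^{j}$, whose vanishing (the type $III$ condition) is again $\partial_{u}\tilde g_{ij}=0$. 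This is exactly $u$-independence of the transverse metric.

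The main obstacles I anticipate are technical rather than algebraic. First, in $ii)\Rightarrow i)$ one must assemble the leafwise transverse Killing fields into functions $C^{i}(u,x^{k})$ that are smooth in $u$ as well, so that the resulting $X_{a}$ are genuine smooth nil-Killing vector fields on a full neighborhood of $p$; this requires some care with the $u$-dependence of the transverse isometry algebra. Second, the phrase ``linearly independent at $p$'' in $i)$ must be read as independence in the screen space $\lambda^{\perp}/\lambda$ (equivalently, independence together with $k=\tfrac{\partial}{\partial v}$), since otherwise one could inflate an incomplete set of transverse Killing fields by adding multiples of the null field $\tfrac{\partial}{\partial v}$; making this reading explicit is what aligns $i)$ with genuine transitivity, and hence with local homogeneity. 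I would also invoke, as a standard background fact, the equivalence between local homogeneity and the pointwise spanning property of the Killing algebra.
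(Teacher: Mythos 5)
Your route is essentially the paper's: reduce everything to the coordinate classification of Proposition \ref{NilKillinInCoordinates}, identify the nil-Killing condition for fields in $\lambda^{\perp}$ with the Killing equation of the transverse metric $\tilde{g}(u,\cdot)$, invoke the standard equivalence between local homogeneity and pointwise-spanning Killing algebras, and verify the ``Moreover'' clause by direct computation of $\mathcal{L}_{\partial_u}g$. There is, however, one genuine gap in your $i)\Rightarrow ii)$ step: Proposition \ref{NilKillinInCoordinates} classifies $\hat{\mathcal{N}}_{\lambda}$, i.e.\ fields that are nil-Killing \emph{and} satisfy $[X,\lambda]\subset\lambda$, whereas hypothesis $i)$ only supplies nil-Killing fields lying in $\lambda^{\perp}$. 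You may not feed $A=0$ into \eqref{Ceq} until the bracket condition is established. The paper inserts exactly this missing step: for $X\in\lambda^{\perp}$ nil-Killing and $W\in\lambda^{\perp}$, using integrability of $\lambda^{\perp}$ (part of the Kundt hypothesis) and the type $III$ property of $\mathcal{L}_{X}g$, one gets $0=\mathcal{L}_{X}g(k,W)=-g(k,[X,W])-g([X,k],W)=-g([X,k],W)$, whence $[X,k]\in(\lambda^{\perp})^{\perp}=\lambda$. Alternatively you can patch this inside your own computation without the bracket argument: with $A=0$ and $\tilde{g}$ independent of $v$, the system $(\mathcal{L}_{X}g)_{r}=0$ contains the terms $\tilde{g}_{ij}\,\partial_{v}C^{j}=0$ and $\partial_{v}B+W_{i}\partial_{v}C^{i}=0$, and positive definiteness of $\tilde{g}$ forces $\partial_{v}C^{j}=0$ and then $\partial_{v}B=0$, so $[X,k]=0$ comes out automatically. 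Either way, the step must be made explicit before your ``dictionary'' is available.

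Two further points are to your credit rather than against you. Your insistence that ``linearly independent at $p$'' be read in the screen space $\lambda^{\perp}/\lambda$ is not pedantry but necessary for the statement to be true as the paper uses it: for instance in dimension $n=4$, if $X$ is a single spacelike nil-Killing field in $\lambda^{\perp}$, then $X$ and $X+k$ are spacelike, nil-Killing, in $\lambda^{\perp}$, and independent at $p$, yet their transverse projections span only one direction, so a transverse metric with a one-dimensional isometry algebra would satisfy the literal $i)$ without being locally homogeneous. The paper's proof silently assumes the projections $\tilde{X}_{u,l}$ are independent at $p$, which is precisely your reading. Likewise, the smooth-in-$u$ selection of transverse Killing frames that you flag as the main technical obstacle in $ii)\Rightarrow i)$ is glossed at the same point in the original: the paper simply posits a ``smoothly parametrized collection'' $X_{u,l}=C^{s}_{u,l}(x^{k})\partial_{x^{s}}$ without constructing it. So apart from the $[X,\lambda]\subset\lambda$ step above, your proposal matches the paper's proof, and is more careful than the original about the two subtleties it elides.
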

\begin{proof}

$"\Rightarrow"$
If $p\in U$, let $X_{1},\dots X_{n-2}$ satisfying the assumptions in $i).$ If $l\in \{1,\dots n-2\}$, then since $X_{l}$ belongs to $\lambda^{\perp}$ and is nil-Killing with respect to $\lambda$ we know that,
$$\mathcal{L}_{X_{l}}g(k,W)=-g(k,[X_{l},W])-g([X_{l},k],W)=-g([X_{l},k],W)=0,$$
for all $W\in \lambda^{\perp}$. Thus $[X_{l},k]\in \lambda$ which by proposition \ref{NilKillinInCoordinates}, implies that there exists functions $B_{l}(u,x^{k})$ and $C_{l}^{s}(u,x^{k})$, for $l,s=1,\dots n-2,$ satisfying
\begin{equation}
    C_{l}^{s}\frac{\partial \tilde{g}_{ij}}{\partial x^{s}} + \tilde{g}_{js}\frac{\partial C_{l}^{s}}{\partial x^{i}}+\tilde{g}_{is}\frac{\partial C_{l}^{s}}{\partial x^{j}}.
\end{equation}
where
\begin{equation}X_{l}=B_{l}(u,x^{k})\frac{\partial }{\partial v}+C_{l}^{s}\frac{\partial}{\partial x^{s}}.\end{equation}
Hence for each $u$ the collection
\begin{equation}
\tilde{X}_{u,l}(x^{k})= C_{l}^{s}(u,x^{k})\frac{\partial}{\partial x^{s}}
\end{equation}
for $l=1,\dots, n-2,$ constitutes a local collection of Killing vector fields for the transverse metric $\tilde{g}_{ij}(u,x^{k})dx^{i}dx^{j},$ which are linearly independent at the point $p.$ Since the point $p$ was arbitrarily chosen this implies that the transverse metric
$\tilde{g}_{ij}(u,x^{k})dx^{i}dx^{j}$ must be locally homogeneous, for each $u.$

$"\Leftarrow"$
Suppose that $\tilde{g}_{ij}(u,x^{k})dx^{i}dx^{j}$ is locally homogeneous for each $u$. Given $u_{0}$ and $x_{0}^{k}$, then there exists a smoothly parametrized collection $X_{u,l}=C_{u,l}^{s}(x^{k})\frac{\partial}{\partial x^{s}}$, $i=1,\dots, n-2$, defined for $u$ in some interval about $u_{0}$ and $x^{k}$ in some fixed neighborhood about $x_{0}^{k},$  such that for each $u$, $\{X_{u,l}\}_{l=1,\dots n-2}$ is a pointwise linearly independent collection of local Killing vector fields for $\tilde{g}_{ij}(u,x^{k})dx^{i}dx^{j}$. Hence letting $C_{l}^{s}(u,x^{k}):=C_{u,l}^{s}(x^{k})$ for each $u$, the collection
$X_{l}=C_{l}^{s}(u,x^{k})\frac{\partial }{\partial x^{s}}$ satisfies $[X_{l},k]=0$ and equation \eqref{Ceq} and therefore by proposition \ref{NilKillinInCoordinates} the vector fields $X_{l}$ are nil-Killing with respect to $\lambda$, for all $l,$ and satisfy the assumptions in $i).$

Lastly
\begin{equation}
    \mathcal{L}_{\frac{\partial}{\partial u}}g=2du((\frac{\partial}{\partial u}H)du+(\frac{\partial}{\partial u}W_{i})dx^{i}) + (\frac{\partial}{\partial u}\tilde{g})dx^{i}dx^{j},
\end{equation}
from which we observe that $\frac{\partial}{\partial u}$ is nil-Killing w.r.t. $\lambda$ iff. $\frac{\partial}{\partial u}\tilde{g}=0.$

\end{proof}

Following \cite{KundtSpacetimes} we say that a Lorentzian manifold with a rank $1$ distribution, $(M,g,\lambda)$, is \emph{degenerate Kundt} if it is Kundt and the Riemannian curvature tensor and all its covariant derivatives, $\nabla^{m}Rm$ for $m\geq 0,$ are of type $II$ w.r.t. $\lambda.$ In this case we shall simply say that $(M,g,\lambda)$ is degenerate Kundt. With a little work the following characterization follows readily from \cite{KundtSpacetimes}.

\begin{proposition}\label{DegkundtChar}
Let $(M,g,\lambda)$ be Kundt and suppose that $k\in \lambda$ is a Kundt vector field defined on some open set $U\subset M.$
Then
\begin{enumerate}[i)]
    \item The curvature $R$ is of type $II$ w.r.t. $\lambda$ on $U$ iff. $(\mathcal{L}_{k})^2g$ has boost-order $\leq -2$.
    \item $(U,g,\lambda)$ is degenerate Kundt iff. \begin{equation}(\mathcal{L}_{k})^2g\end{equation} has boost-order $\leq -2,$ and \begin{equation}(\mathcal{L}_{k})^3g=0.\end{equation}
\end{enumerate}
\end{proposition}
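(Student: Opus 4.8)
The plan is to pass to Kundt canonical coordinates adapted to $k$, reduce both conditions on $(\mathcal{L}_{k})^{n}g$ to polynomial-degree conditions on the metric functions, and then match these against the boost-weight structure of the curvature — which for i) can be done by hand and for ii) is supplied by \cite{KundtSpacetimes}. First I would invoke the existence of coordinates $(u,v,x^{i})$ in which $k=\frac{\partial}{\partial v}$, $k^{\natural}=du$ and $g=2du(dv+Hdu+W_{i}dx^{i})+\tilde{g}_{ij}(u,x)dx^{i}dx^{j}$ as in proposition \ref{Kundttransverse}, where $\tilde{g}_{ij}$ is independent of $v$ — the latter because $k$ is nil-Killing (proposition \ref{KundtChar}) and the boost-weight $0$, i.e. $dx^{i}dx^{j}$, part of $\mathcal{L}_{k}g$ is exactly $\partial_{v}\tilde{g}_{ij}$. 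Since $k=\partial_{v}$ has constant components, $\mathcal{L}_{k}$ differentiates the metric coefficients in $v$, so that $(\mathcal{L}_{k})^{n}g=2(\partial_{v}^{n}H)\,du\,du+2(\partial_{v}^{n}W_{i})\,du\,dx^{i}$ for all $n\geq 1$. Reading off boost weights, $du\,du$ is pure boost weight $-2$ while $du\,dx^{i}$ carries boost weights $-2$ and $-1$; hence $(\mathcal{L}_{k})^{2}g$ has boost order $\leq -2$ exactly when $\partial_{v}^{2}W_{i}=0$ (so $W_{i}$ is affine in $v$), and $(\mathcal{L}_{k})^{3}g=0$ exactly when $\partial_{v}^{3}H=0$ and $\partial_{v}^{3}W_{i}=0$ (so $H$ is quadratic and $W_{i}$ quadratic in $v$).

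For i), I would establish the identity $\tfrac{1}{2}(\mathcal{L}_{k})^{2}g_{ab}=S_{a}{}^{c}S_{cb}-R_{cabd}k^{c}k^{d}$ (sign depending on the curvature convention), where $S_{ab}=\nabla_{a}k_{b}=\tfrac{1}{2}(\mathcal{L}_{k}g)_{ab}$ is symmetric, since $k^{\natural}=du$ is closed, and of type III, since $\mathcal{L}_{k}g$ is. This follows from the Ricci identity together with $\nabla_{k}k=0$. Because $S$ is type III, the term $S_{a}{}^{c}S_{cb}$ has boost order $\leq -2$, while the contraction $R_{cabd}k^{c}k^{d}$ with two boost-weight $-1$ vectors is a symmetric $2$-tensor of boost order $\leq 0$ whose boost-weight $0$ and $-1$ parts coincide (up to sign) with the boost-weight $+2$ and $+1$ parts of $R$. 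Comparing boost-weight components with the coordinate expression for $(\mathcal{L}_{k})^{2}g$, whose boost-weight $0$ part vanishes and whose boost-weight $-1$ part is proportional to $\partial_{v}^{2}W_{i}$, shows that the boost-weight $+2$ part of $R$ vanishes automatically and that its boost-weight $+1$ part vanishes iff $\partial_{v}^{2}W_{i}=0$. Thus $R$ is of type II iff $(\mathcal{L}_{k})^{2}g$ has boost order $\leq -2$; equivalently one may simply read these positive-boost-weight components of the Kundt curvature off the formulas in \cite{KundtSpacetimes}.

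For ii), the forward implication is immediate from i): if the whole tower $\nabla^{m}Rm$ is of type II then in particular $R$ is, giving $\partial_{v}^{2}W_{i}=0$, and it remains to produce $\partial_{v}^{3}H=0$, i.e. $(\mathcal{L}_{k})^{3}g=0$. For this last condition, together with sufficiency, I would appeal to the classification in \cite{KundtSpacetimes}, according to which a Kundt metric is degenerate precisely when, in the above coordinates, $W_{i}$ is affine and $H$ is quadratic in $v$; by the first paragraph this is exactly the conjunction of $(\mathcal{L}_{k})^{2}g$ having boost order $\leq -2$ and $(\mathcal{L}_{k})^{3}g=0$.

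The main obstacle is precisely this last point. Unlike the curvature itself, whose positive-boost-weight part is insensitive to $H$ (part i) constrains only $W_{i}$), the requirement that all covariant derivatives of $R$ remain type II is what pins down the $v$-degree of $H$, and establishing the equivalence ``all $\nabla^{m}Rm$ of type II $\Leftrightarrow H$ quadratic in $v$'' is the technical heart drawn from \cite{KundtSpacetimes}. A self-contained alternative would be to iterate the identity of the second paragraph, relating $(\mathcal{L}_{k})^{m+2}g$ to the $k$-contractions of $\nabla^{m}Rm$ and invoking the stability of boost order under $\nabla$ along $k$, but carrying this out carefully is more work than citing the existing analysis.
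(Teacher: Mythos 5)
Your skeleton is the paper's: canonical Kundt coordinates, the computation $(\mathcal{L}_{\partial_v})^n g = 2(\partial_v^n H)\,du\,du + 2(\partial_v^n W_i)\,du\,dx^i$ with $\partial_v\tilde g_{ij}=0$, and matching against the conditions from \cite{KundtSpacetimes} that the curvature is type $II$ iff $W_{i,vv}=0$ and the metric is degenerate Kundt iff in addition $H_{,vvv}=0$. But there is a genuine gap at your first step: you assume coordinates in which the \emph{given} Kundt field satisfies $k=\partial_v$ and $k^{\natural}=du$ with $g$ in canonical form. Such coordinates exist for \emph{some} Kundt field in $\lambda$, not for an arbitrary one: they force $k^{\natural}$ to be exact, whereas a general Kundt field on $U$ has the form $k=f\partial_v$ with $k(f)=0$, so $k^{\natural}=f\,du$ is not even closed when $f$ depends on the transverse coordinates --- and $f\partial_v$ \emph{is} Kundt for any such $f$, since $\mathcal{L}_{f\partial_v}g=df\otimes_{s}du+f\mathcal{L}_{\partial_v}g$ is type $III$ (proposition \ref{KundtChar}). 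Your own identity in part i) silently uses the same assumption: symmetry of $S_{ab}=\nabla_a k_b$ requires $dk^{\natural}=0$. Since the proposition is asserted for every Kundt vector field, and is later used in that generality (e.g.\ in theorem \ref{DefTheorem}), this case cannot be dropped. The paper closes it with a short computation you are missing: write $k=f\partial_v$ and check $(\mathcal{L}_{k})^2g=f^2(\mathcal{L}_{\partial_v})^2g$ and $(\mathcal{L}_{k})^3g=f^3(\mathcal{L}_{\partial_v})^3g$, so the boost-order conditions for $k$ and $\partial_v$ coincide.

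A secondary imprecision sits in your identity-based argument for part i). The tensor $Q_{ab}=R_{cabd}k^ck^d$ does not see the whole positive boost-weight part of $R$: its b.w.\ $0$ part consists of the components $R(k,m_i,k,m_j)$, which do exhaust b.w.\ $+2$, but its b.w.\ $-1$ part consists only of the components $R(k,l,k,m_i)$, while the remaining b.w.\ $+1$ components $R(k,m_i,m_j,m_l)$ are invisible to $Q$. So $\tfrac{1}{2}(\mathcal{L}_{k})^2g_{ab}=S_{a}{}^{c}S_{cb}-Q_{ab}$ by itself shows only that those particular components vanish iff $(\mathcal{L}_{k})^2g$ has boost-order $\leq -2$; to conclude $R$ is type $II$ you still need the coordinate curvature formulas --- i.e.\ exactly the citation of \cite{KundtSpacetimes} you offer as a fallback, and which the paper uses outright. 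With the $k=f\partial_v$ reduction supplied and part i) resting on the cited formulas, your argument coincides with the paper's proof.
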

\begin{proof}
Let $(u,v,x^{i})$ be coordinates on an open set $V\subset M$ such that $\frac{\partial}{\partial v}$ is a Kundt vector field belonging to $\lambda$ and
$$g=2du(dv+H(u,v,x,y)du + W_{i}(u,v,x^{k})dx^{i})+{g^{\perp}}_{ij}(u,x^{k})dx^{i}dx^{j}.$$ Then \begin{equation}
    (\mathcal{L}_{\partial_v})^2g=2du(H_{,vv}du + W_{i,vv}dx^{i})
\end{equation}
and
\begin{equation}
    (\mathcal{L}_{\partial_v})^3g=2du(H_{,vvv}du+W_{i,vvv}dx^{i})
\end{equation}
By \cite{KundtSpacetimes} we know that the Riemannian curvature is of type $II$ iff. $W_{i,vv}=0$, for all $i,$ which we see is satisfies iff. $(\mathcal{L}_{\partial_{v}})^2g$ is of boost-order $\leq -2$ with respect to $\lambda$. Furthermore in \cite{KundtSpacetimes} $(M,g,\lambda)$ is degenerate Kundt on $V$ iff. $H_{,vvv}=0$ and $W_{i,vv}=0$, for all $i$ which is satisfied iff. $(\mathcal{L}_{\partial_v})^2g$ is of boost-order $\leq -2$ with respect to $\lambda$ and $(\mathcal{L}_{\partial_{v}})^3g=0.$

If $k$ is any Kundt vector field on $V$ belonging to $\lambda,$ we can write $k=f\partial_v$ for some smooth function $f$ such that $k(f)=0.$ Furthermore
\begin{equation}
    \mathcal{L}_{k}g=\mathcal{L}_{f\partial_v}g=df\otimes_{s}du+f\mathcal{L}_{\partial_v}g,
\end{equation}

\begin{multline}
    (\mathcal{L}_{k})^2g=\mathcal{L}_{f\partial_v}\mathcal{L}_{k}g=df\otimes_s(\partial_v*\mathcal{L}_{k}g)+f\mathcal{L}_{\partial_v}\mathcal{L}_{k}g\\=f\mathcal{L}_{\partial_v}(df\otimes_{s}du+f\mathcal{L}_{\partial_v}g)=f^2(\mathcal{L}_{\partial_v})^2g
\end{multline}
and lastly
\begin{multline}
    (\mathcal{L}_{k})^3g=\mathcal{L}_{k}f^2((\mathcal{L}_{\partial_v})^2g)=f^2\mathcal{L}_{k}((\mathcal{L}_{\partial_v})^2g))\\=f^2(df\otimes(\partial_v*(\mathcal{L}_{\partial_v})^2g))+f(\mathcal{L}_{\partial_{v}})^3g)=f^3(\mathcal{L}_{\partial_{v}})^3g,
\end{multline}
where $*$ denotes some contraction. Thus $(\mathcal{L}_{k})^2g$ is of boost-order $-2$ iff the same is true with $\partial_v$ and  $(\mathcal{L}_{k})^3g=0$ iff. $(\mathcal{L}_{\partial_v})^3g=0$. This finishes the proof.
\end{proof}
Hence $(M,g,\lambda)$ is degenerate Kundt iff. we can find a cover $\{U_{\alpha}\}_{\alpha \in I}$ of $M$ along with kundt vector fields $k_{\alpha}$ on $U_{\alpha}$ belonging to $\lambda$ such that $(\mathcal{L}_{k_\alpha})^2g$ is of boost-order $\leq -2$ w.r.t $\lambda$ and $(\mathcal{L}_{k_\alpha})^3g=0$, for all $\alpha\in I$.
\newline

\section{Algebraic stability for degenerate Kundt spacetimes}

In this section we characterize tensors on degenerate Kundt manifolds which are algebraically stable in the sense that taking arbitrary covariant derivatives leaves the algebraic type unchanged.

We shall start by discussing the behaviour of a class of frames.
\begin{characterization}\label{DegKundtFrame}
Given a Lorentzian manifold $(M,g)$, then a null-frame  $\{k,l,m_{i}\}$ satisfies
\begin{enumerate}[i)]
    \item $\mathcal{L}_{m_{i}}m_{j}\in {k}^{\perp},$ for all $i,j,$
    \item $\mathcal{L}_{k}l\in {k}^{\perp},\quad (\mathcal{L}_{k})^2l\in \mathbb{R}k, \quad (\mathcal{L}_{k})^3l=0,$
    \item $\mathcal{L}_{k}m_{i}\in \mathbb{R}k, \quad (\mathcal{L}_{k})^{2}m_{i}=0,$
\end{enumerate}
 if and only if for the connection coefficients, $\Gamma_{\alpha\beta\gamma}=g(e_{\alpha},\nabla_{e_{\gamma}}e_{\beta})$, the following holds:
\begin{enumerate}
    \item $\Gamma_{\alpha\beta\gamma}$ vanishes whenever $\alpha\beta\gamma$ is a strictly positive boost-weight index.
    \item Given an integer $s\geq 0,$ then $k^{(s+1)}\Gamma_{\alpha\beta\gamma}=0$ when $\alpha\beta\gamma$ is an index of boost-weight $-s.$
\end{enumerate}
\end{characterization}

An important property of such frames is that if $s$ is an integer and $e_{i_1}\cdots e_{i_{r}}$ is a tensor product of frame elements having boost-weight $s,$ then \begin{equation}(\mathcal{L}_{k})^{j}(e_{i_1}\cdots e_{i_{r}})\end{equation} has boost-order $\leq s-j,$  for all integers $j\geq 0.$

\begin{definition}
If $(M,g,\lambda)$ is a Lorentzian manifold with a null-distribution $\lambda,$ a  local frame $\{k,l,m_{i}\}$ satisfying $k\in \lambda$ and $i)-iii)$ in characterization \ref{DegKundtFrame} is said to be a degenerate Kundt frame.
\end{definition}

By application of proposition \ref{DegkundtChar} it can be shown that $(M,g,\lambda)$ is degenerate Kundt iff. about each point of $M$ there exists a degenerate Kundt frame for $(M,g,\lambda)$. This explains the chosen name for such frames.

The following lemma will be useful in the characterization of algebraic stability.
\begin{lemma}\label{componentlemma}
Suppose that $(M,g,\lambda)$ is a degenerate Kundt spacetime and let $\{k,l,m_{i}\}$ be a degenerate Kundt frame on some open set $U\subset M$. If $T$ is a rank $r$ tensor on $U$ then $
(\mathcal{L}_{k})^{j}T$ is of boost-order $\leq s-j$, for all $j\geq 0$ iff. $T$ is of boost-order $\leq s$ and each boost-weight $s+1-j$ component, $A$, of $T$ satisfies $(k)^{j}A=0$, for all $0\leq j.$
\end{lemma}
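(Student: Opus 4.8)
The plan is to work in the fixed degenerate Kundt frame $\{k,l,m_{i}\}$ and expand everything into boost-weight components. Choosing a basis $\{E_{I}\}$ for the rank $r$ tensors built from tensor products of the frame and coframe elements, each $E_{I}$ has a definite boost weight $b(I)$, and I decompose $T=\sum_{b}A_{b}$ with $A_{b}=\sum_{b(I)=b}T^{I}E_{I}$, the $T^{I}$ being the component functions. The symbol $(k)^{j}A_{b}$ is read as differentiation of the component functions along $k$ with the frame held fixed, $\sum_{b(I)=b}(k^{j}T^{I})E_{I}$; in particular $(k)^{m}A_{b}=0$ is equivalent to $k^{m}T^{I}=0$ for all $I$ with $b(I)=b$, and since $k^{m}T^{I}=0$ forces $k^{m'}T^{I}=0$ for every $m'\ge m$, the vanishing of $(k)^{m}A_{b}$ propagates to all higher derivatives. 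The single computational engine for both directions is the Leibniz expansion
\begin{equation}
(\mathcal{L}_{k})^{j}A_{b}=\sum_{b(I)=b}\sum_{a=0}^{j}\binom{j}{a}(k^{a}T^{I})(\mathcal{L}_{k})^{j-a}E_{I},
\end{equation}
together with the frame property quoted just before the lemma, which guarantees that the term indexed by $(I,a)$ has boost-order $\le b-(j-a)$.

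For the reverse implication I would assume $T$ has boost-order $\le s$ and $(k)^{s+1-b}A_{b}=0$ for every $b\le s$. Fixing $j$ and $b$, the hypothesis kills all terms in the expansion with $a\ge s+1-b$, so every surviving term has $a\le s-b$ and hence boost-order $\le b-(j-a)\le b-j+(s-b)=s-j$; summing over $b$ shows $(\mathcal{L}_{k})^{j}T$ has boost-order $\le s-j$.

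For the forward implication, the case $j=0$ gives boost-order $\le s$ immediately, and the component conditions I would obtain by descending induction on $b$ from $s$ downward. The key point is that, for $j=s+1-b$, the boost-weight-$b$ part of $(\mathcal{L}_{k})^{j}T$ collapses to
\begin{equation}
\big((\mathcal{L}_{k})^{s+1-b}T\big)_{b}=(k)^{s+1-b}A_{b}+\sum_{b'>b}\big[(\mathcal{L}_{k})^{s+1-b}A_{b'}\big]_{b},
\end{equation}
because contributions from $A_{b'}$ with $b'<b$ already lie in boost-order $\le b'<b$, while inside $A_{b}$ only the all-derivatives term $a=j$ reaches boost weight $b$. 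The hypothesis makes the left side vanish (since $b>s-j$), so it remains to kill the cross terms with $b'>b$. Here the inductive hypothesis $(k)^{s+1-b'}A_{b'}=0$ restricts the surviving $a$ in the expansion of $(\mathcal{L}_{k})^{s+1-b}A_{b'}$ to $a\le s-b'$, forcing each such term into boost-order $\le b'-(s+1-b)+(s-b')=b-1<b$; hence its boost-weight-$b$ part vanishes and we conclude $(k)^{s+1-b}A_{b}=0$.

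I expect the main obstacle to be the forward direction, specifically the triangular coupling between boost-weight components produced by $\mathcal{L}_{k}$ acting on the higher-weight frame elements: a single component condition is entangled with all the higher ones through the cross terms. The mechanism that untangles this is precisely the degenerate Kundt frame property, namely the extra drop in boost-order under repeated $\mathcal{L}_{k}$, which, combined with the descending induction, confines every cross term strictly below boost weight $b$. Verifying the boost-order bookkeeping $b'-(s+1-b)+(s-b')=b-1$ in that step is the only delicate point, and it is routine once the expansion and the frame property are in place.
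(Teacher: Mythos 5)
Your proof is correct, and it is a genuinely dual organization of the paper's argument rather than a reproduction of it. The paper never expands $T$ in a basis: it works with the scalar components $T_{\beta_{1}\cdots\beta_{r}}$ obtained by contracting $T$ against products of frame \emph{vectors}, and Leibniz-expands $(k)^{j}(T_{\beta_{1}\cdots\beta_{r}})$ into full contractions $((\mathcal{L}_{k})^{q_{1}}T)\ast(\mathcal{L}_{k})^{q_{2}}(e_{\beta_{1}}\cdots e_{\beta_{r}})$, each of which vanishes because it is a full contraction of a tensor of boost-order $\leq -1$; with this bookkeeping the forward implication is immediate, and the backward implication is proved by induction on the number of derivatives $j$. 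You instead expand $T=\sum_{b}A_{b}$ in coframe monomials and push the Lie derivatives onto those, which swaps where the work sits: your $\Leftarrow$ direction becomes a direct one-line bound with no induction (cleaner than the paper's), while your $\Rightarrow$ direction needs the descending induction on boost weight $b$ to untangle the triangular coupling (which the paper avoids, since contracting against frame vectors isolates one component at a time). The trade is fair, and your bookkeeping $b'-(s+1-b)+(s-b')=b-1$ checks out, as does the collapse identity $\big((\mathcal{L}_{k})^{s+1-b}T\big)_{b}=(k)^{s+1-b}A_{b}+\sum_{b'>b}[\cdots]_{b}$. One small citation mismatch to fix: the property stated just before the lemma concerns products of frame \emph{vectors} $e_{i_{1}}\cdots e_{i_{r}}$, whereas your $E_{I}$ are (for covariant $T$) products of \emph{coframe} elements, so you need the dual statement that $(\mathcal{L}_{k})^{j}(w^{i_{1}}\cdots w^{i_{r}})$ has boost-order $\leq q-j$; the paper asserts exactly this version, without proof, only later (in the proof of theorem \ref{AlgebraicStability}), and it follows from the vector version by repeatedly Lie-differentiating the constant pairings $w^{\alpha}(e_{\beta})$, so this is a one-line patch rather than a gap.
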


$"\Rightarrow"$ If $j\geq 0$ is an integer then suppose that $\beta_{1}\cdots\beta_{r}$ is an index of b.w. $-j+s+1.$ Then $(k)^j(T_{\beta_{1}\cdots \beta_{r}})$ can be written as a sum of terms of the form
\begin{equation}
    ((\mathcal{L}_{k})^{q_1}T)_{a_1\cdots a_{r}}((\mathcal{L}_{k})^{q_2}(e_{\beta_1}\cdots e_{\beta_r}))^{a_{1}\cdots a_{r}},
\end{equation}
where $q_1+q_2=j.$ Such a term is a full contraction of a tensor of  boost-order $$\leq (-q_1 +s)+ (j-s-1-q_2) = -q_1 -q_2-1 +j=-1,$$ which must therefore vanish. This proves the implication.

$"\Leftarrow"$ We prove by induction on $j$ that $(\mathcal{L}_{k})^{j}T$ is of boost-order $\leq s-j$, for all $j\geq 0.$ Since $T$ is of boost-order $\leq s,$ this is trivially true for $j=0.$ We prove it also for $j=1.$ If $\alpha_{1}\cdots\alpha_{r}$ is a b.w. $s$ index, then
\begin{equation}
    k(T_{\alpha_{1}\cdots \alpha_{r}})=(\mathcal{L}_{k}T)_{\alpha_{1}\cdots \alpha_{r}}+T_{a_1\cdots a_{r}}\mathcal{L}_{k}(e_{\alpha_{1}}^{a_{1}}\cdots e_{\alpha_{r}}^{a_{r}}) = (\mathcal{L}_{k}T)_{\alpha_{1}\cdots \alpha_{r}},
\end{equation}
where the last equality holds since $\mathcal{L}_{k}(e_{\alpha_{1}}^{a_{1}}\cdots e_{\alpha_{r}}^{a_{r}})$ is of boost-order $\leq -s-1.$ Therefore it follows that $(\mathcal{L}_{k}T)_{\alpha_{1}\cdots \alpha_{r}}=0,$ finishing this step. 

Now suppose it is true for integers up to $j-1$ with $j\geq 2.$ Since taking the Lie derivative by $k$ preserves boost-order, we know from the induction hypothesis that $(\mathcal{L}_{k})^{j}T$ has boost order $\leq s-j+1.$ If $\beta_{1}\cdots\beta_{r}$ is an index of b.w. $s-j+1$ then $((\mathcal{L}_{k})^{j}T)_{\beta_{1}\cdots\beta_{r}}$ can be written as a sum of $(k)^{j}(T_{\beta_{1}\cdots \beta_{r}})$ and terms of the form 
\begin{equation}
    ((\mathcal{L}_{k})^{q_1}T)_{a_1\cdots a_{r}}((\mathcal{L}_{k})^{q_1}(e_{\beta_1}\cdots e_{\beta_r}))^{a_{1}\cdots a_{r}},
\end{equation}
such that $q_1+q_2=j,$ with $q_1\leq j-1.$
The former is zero by assumption and the latter is zero by the fact that $(\mathcal{L}_{k})^{q_2}(e_{\beta_1}\cdots e_{\beta_r})$ has boost order $\leq j-s-1-q_2$ and the induction hypothesis which implies that $(\mathcal{L}_{k})^{q_1}T$ is of boost-order $\leq j-q_1$ . This proves that $(\mathcal{L}_{k})^{j}T$ is of boost-order $\leq s-j$, and therefore the claim is proven.
\newline

\begin{proposition}\label{CovProp}
Suppose that $(M,g,\lambda)$ is degenerate Kundt, $k$ is a Kundt vector field, $s$ is an integer and $T$ is a rank $r$ tensor. If $(\mathcal{L}_{k})^{j}T$ is of boost-order $\leq s-j$ w.r.t, for all $j\geq 0,$ then the same property is true for $\nabla T.$
\end{proposition}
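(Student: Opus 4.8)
The plan is to reduce everything to the component criterion supplied by Lemma \ref{componentlemma} and then verify that criterion for $\nabla T$. Fix a degenerate Kundt frame $\{k,l,m_i\}$ on $U$, normalised so that the frame components of $g$ are constant. By Lemma \ref{componentlemma} the hypothesis is equivalent to the statement that $T$ has boost-order $\le s$ and that every boost-weight $w$ frame component $A$ of $T$ satisfies $k^{\,\max(0,\,s+1-w)}A=0$; call this property $(C_s)$. Applying the same lemma to the rank $r+1$ tensor $\nabla T$, it suffices to show that $\nabla T$ also enjoys $(C_s)$. I would do this componentwise, using the formula
\[
(\nabla T)_{\alpha_1\cdots\alpha_r\gamma}=e_\gamma\!\left(T_{\alpha_1\cdots\alpha_r}\right)-\sum_{i}\Gamma^{\mu}{}_{\alpha_i\gamma}\,T_{\alpha_1\cdots\mu\cdots\alpha_r},
\]
treating the frame-derivative term and the connection terms separately and noting that a component carrying the extra index $\gamma$ has boost-weight $w=w_A+\mathrm{bw}(\gamma)$, where $w_A$ is the boost-weight of $A:=T_{\alpha_1\cdots\alpha_r}$.

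For the frame-derivative term $e_\gamma(A)$ I would first extract the commutation relations forced by conditions i)--iii) of Characterization \ref{DegKundtFrame}: $[k,m_i]=c_ik$ with $k(c_i)=0$, and $[k,l]=a\,k+b^i m_i$ with $k(b^i)=0$ and $k^2(a)=0$. The cases $\gamma=k$ and $\gamma=m_i$ are immediate: $k^n(kA)=k^{n+1}A$, and an easy induction gives $k^n(m_iA)=m_i(k^nA)+n\,c_i\,k^nA$, so both are annihilated by the power of $k$ required by $(C_s)$ once it is invoked for $A$. The delicate case is $\gamma=l$, where $\mathrm{bw}(\gamma)=-1$ forces one extra power of $k$ to be absorbed. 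Here I would prove by induction the structural statement that $k^n(lA)$ is a finite sum of terms $X(k^qA)$, with $X\in\{l,m_i,\mathrm{id}\}$ and function coefficients built from $a,b^i,c_i$, in which every exponent $q$ of the powers of $k$ falling on $A$ satisfies $q\ge n-1$ and every coefficient attached to a minimal ($q=n-1$) term is annihilated by $k$. The relations $k(b^i)=k(c_i)=0$ and $k^2(a)=0$ are exactly what keep this invariant stable under one more application of $k$, so the induction closes; consequently $k^{n}(lA)=0$ as soon as $k^{\,n-1}A=0$, which is the bound demanded by $(C_s)$.

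For the connection terms I would use the two conclusions of Characterization \ref{DegKundtFrame}. Writing $w_\Gamma$ and $w_T$ for the boost-weights of $\Gamma^{\mu}{}_{\alpha_i\gamma}$ and of $T_{\alpha_1\cdots\mu\cdots\alpha_r}$, conservation of boost-weight gives $w_\Gamma+w_T=w$. Property 1 (vanishing of strictly positive boost-weight connection coefficients) forces $w_\Gamma\le 0$, hence $w_T\ge w$; in particular if $w>s$ then $w_T>s$ and the factor $T_{\cdots\mu\cdots}$ vanishes by the boost-order part of $(C_s)$, which yields the boost-order statement for $\nabla T$. Property 2 gives $k^{\,1-w_\Gamma}\Gamma^{\mu}{}_{\alpha_i\gamma}=0$ (using constancy of the frame metric to raise the index through $k$), while $(C_s)$ for $T$ gives $k^{\,\max(0,\,s+1-w_T)}T_{\cdots\mu\cdots}=0$. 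Expanding $k^{\,s+1-w}\!\left(\Gamma^{\mu}{}_{\alpha_i\gamma}\,T_{\cdots\mu\cdots}\right)$ by the Leibniz rule, a summand $(k^{p}\Gamma^{\mu}{}_{\alpha_i\gamma})(k^{q}T_{\cdots\mu\cdots})$ can survive only when $p\le -w_\Gamma$ and $q<\max(0,s+1-w_T)$; a short arithmetic check, using $w=w_\Gamma+w_T$ and $w_T\le s$ (otherwise the factor $T_{\cdots\mu\cdots}$ already vanishes), shows $p+q\le s-w<s+1-w$, so every summand vanishes. Together with the frame-derivative estimate this establishes $(C_s)$ for $\nabla T$.

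The main obstacle is the $\gamma=l$ case of the frame-derivative term: because $l$ has boost-weight $-1$, one needs a gain of a full extra power of $k$, and the only control available on the relevant structure function is the quadratic relation $k^2(a)=0$ coming from $(\mathcal{L}_k)^3 l=0$. Keeping track of which coefficients are $k$-annihilated on the minimal terms — so that differentiating a coefficient never manufactures a term carrying too few powers of $k$ on $A$ — is the crux of the induction and the place where the degenerate Kundt frame conditions are used in full strength.
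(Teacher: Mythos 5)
Your proposal is correct and follows essentially the same route as the paper's own proof: reduce to the component criterion of Lemma \ref{componentlemma}, expand $\nabla_{\gamma}T_{\alpha_1\cdots\alpha_r}$ in a degenerate Kundt frame, handle the frame-derivative term by cases on the boost-weight of $\gamma$ using the commutators $[k,m_i]=c_ik$, $[k,l]=ak+b^im_i$ with $k(c_i)=k(b^i)=0$, $k^2(a)=0$, and kill the connection terms by Leibniz together with properties 1 and 2 of Characterization \ref{DegKundtFrame}, with exactly the paper's arithmetic $p+q\le s-w<s+1-w$. The only difference is presentational: your explicit induction for the $\gamma=l$ case (whose stated invariant should, strictly, also record that coefficients one level above minimal have $k$-annihilated derivative, e.g.\ are of the form $c_0+c_1a+c_2k(a)$ with $c_i$ killed by $k$) fills in the step the paper dispatches with ``continuing in a similar manner with repeated use of the commutator relation.''
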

\begin{proof}
Let $\{k,l,m_{i}\}$ be a degenerate Kundt frame for $(M,g,\lambda)$ completing $k$ on some open set. Working in this frame suppose that $\alpha_{0}\beta_{1}\cdots \beta_{r}$ is an index with b.w. $s-j+1$, for a given $j\geq 0.$ We have the identity
\begin{equation}\nabla_{\alpha_{0}}T_{\beta_{1}\cdots \beta_{r}}=e_{\alpha_{0}}(T_{\beta_{1}\cdots \beta_{r}})-\sum_{i=1}^r\tensor{\Gamma}{^\mu_{\alpha_{0}}_{\beta_{i}}}T_{\beta_{1}\cdots \mu\cdots \beta_{r}}.
\end{equation} 
Let us show that $(k)^j(e_{\alpha_{0}}(T_{\beta_{1}\cdots \beta_{r}}))$ and $(k)^j(\tensor{\Gamma}{^\mu_{\alpha_{0}}_{\beta_{i}}}T_{\beta_{1}\cdots \mu\cdots \beta_{r}})$ are both zero, then together with lemma \ref{componentlemma} this will prove the proposition.

Suppose first that $\alpha_{0}$ has b.w. $1$ then $\beta_{1}\cdots \beta_{r}$ is an index of b.w. $-j+s.$ In this case we have
\begin{equation}
   (k)^j(e_{\alpha_{0}}(T_{\beta_{1}\cdots \beta_{r}}))=(k)^{j+1}(T_{\beta_{1}\cdots \beta_{r}})=0,
\end{equation}
which vanishes by lemma \ref{componentlemma}.

Now suppose that $\alpha_{0}$ has b.w. $0,$ then $e_{\alpha_{0}}=m_{i}$ for some $i,$ and $\beta_{1}\cdots \beta_{r}$ has b.w. $s-j+1.$ By the construction of the frame we know that there exists a smooth function $f_{i}$ satisfying $k(f_i)=0$ such that $[k,m_{i}]=f_{i}k.$ It follows that
\begin{equation}
\begin{gathered}
    (k)^j(e_{\alpha_{0}}(T_{\beta_{1}\cdots \beta_{r}}))=(k)^jm_i(T_{\beta_{1}\cdots \beta_{r}})\\
    =m_i(k)^j(T_{\beta_{1}\cdots \beta_{r}})) +jf(k)^j(T_{\beta_{1}\cdots \beta_{r}})=0,
\end{gathered}
\end{equation}
where the last equality holds since $T_{\beta_{1}\cdots \beta_{r}}$ is a b.w. $s-j+1$ component of $T.$

Lastly, suppose that $\alpha_{0}$ has b.w. $-1,$ then $\beta_{1}\cdots\beta_{r}$ has b.w. $s-j+2,$ and $e_{\alpha_{0}}=l.$ By construction of the frame there exists $a,b^{i}\in C^{\infty}(U)$, $i=1,\dots n-2,$ satisfying $kk(a)=0$ and $k(b^{i})=0$ such that $[k,l]=ak+b^{i}m_{i}.$ Using the commutator relation we can write
\begin{equation}
\begin{gathered}
    (k)^j(e_{\alpha_{0}}(T_{\beta_{1}\cdots \beta_{r}}))=(k)^{j}l(T_{\beta_{1}\cdots \beta_{r}})\\
=(k)^{j-1}lk(T_{\beta_{1}\cdots \beta_{r}})+(j-1)k(a)(k)^{j-1}T_{\beta_{1}\cdots \beta_{r}} +a(k)^{j}(T_{\beta_{1}\cdots \beta_{r}}) \\
+b^{i}m_{i}(k)^{j-1}(T_{\beta_{1}\cdots \beta_{r}})+(j-1)b^{i}f_{i}(k)^{j-1}(T_{\beta_{1}\cdots \beta_{r}}).
\end{gathered}
\end{equation}
Continuing in a similar manner with repeated use of the commutator relation allows us to rewrite this expression into a sum of terms, each of which having at least $j-1$ derivatives of $T_{\beta_{1}\cdots \beta_{r}}$ with respect to $k$. It follows from lemma \ref{componentlemma} that all such terms vanish. 

Hence we have shown that $(k)^{j}(e_{\alpha_{0}}(T_{\beta_{1}\cdots \beta_{r}}))=0,$ whenever $\alpha_{0}\beta_{1}\cdots \beta_{r}$ is an index with b.w. $s-j+1$.
\newline

Now let us show that when $\alpha_{0}\beta_{1}\cdots \beta_{r}$ is an index with b.w. $s-j+1$ the terms \begin{equation}\label{exp}(k)^j(\tensor{\Gamma}{^\mu_{\alpha_{0}}_{\beta_{i}}}T_{\beta_{1}\cdots \mu\cdots \beta_{r}})\end{equation} vanish. Given a tensor $A$, we let $[A_{\alpha_{1}\cdots \alpha_{k}}]$ denote the boost-weight of a given component. Using this we have
\begin{equation}
    [T_{\beta_{1}\cdots \mu\cdots \beta_{r}}]=-j+s+1-[\tensor{\Gamma}{^{\mu}_{\alpha_{0}}_{\beta_{i}}}].
\end{equation}
$\tensor{\Gamma}{^{\mu}_{\alpha_{}}_{\beta_{0}}}$ is zero, for all strictly positive b.w. indices, therefore we need only show that the expression \eqref{exp} vanishes when $[\tensor{\Gamma}{^{\mu}_{\alpha_{0}}_{\beta_{i}}}]\leq 0.$ 

Now suppose that $[\tensor{\Gamma}{^{\mu}_{\alpha_{0}}_{\beta_{i}}}]=-t,$ for some integer $t\geq 0,$ then it follows from the construction of the frame that
\begin{equation}\label{SpecialChristoffel}
    (k)^{t+1}(\tensor{\Gamma}{^{\mu}_{\alpha_{0}}_{\beta_{i}}})=0.
\end{equation}
The expression \eqref{exp} can be written as a sum of terms of the form
\begin{equation}
    (k)^{n_1}(\tensor{\Gamma}{^\mu_{\alpha_{0}}_{\beta_{0}}})(k)^{n_2}(T_{\beta_{1}\cdots \mu\cdots \beta_{r}})
\end{equation}
such that $n_1+n_2=j.$ If $n_1\geq t+1$ then the term vanishes by \eqref{SpecialChristoffel}. Supposing that $n_1\leq t$, then $n_{2}\geq j-t=-[T_{\beta_{1}\cdots \mu\cdots \beta_{r}}]+s+1$, therefore such a term must vanish by lemma \ref{componentlemma}. This proves that $(k)^j(\tensor{\Gamma}{^\mu_{\alpha_{0}}_{\beta_{i}}}T_{\beta_{1}\cdots \mu\cdots \beta_{r}})=0,$ finishing the proof of the proposition.
\end{proof}

\begin{theorem}[Stability of algebraic type]\label{AlgebraicStability}
Suppose that $(M,g,\lambda)$ is degenerate Kundt and let $k\in \lambda$ be a Kundt vector field on some open set $U\subset M$. If $T$ is a rank $r$ tensor on $U$ and $s$ is some integer, then the following are equivalent:
\begin{enumerate}[i)]
    \item $(\mathcal{L}_{k})^jT$ is a tensor of boost-order $\leq s-j$, for all $j\geq 0.$
    \item $\nabla^{m}T$ has boost order $\leq s$, for all $m\geq 0.$
\end{enumerate}
\end{theorem}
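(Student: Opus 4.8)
The plan is to establish the two implications separately, using Lemma \ref{componentlemma} as the bridge between the two equivalent conditions, and Proposition \ref{CovProp} as the engine for the harder direction. The key observation is that condition $i)$ is exactly the hypothesis of both Lemma \ref{componentlemma} and Proposition \ref{CovProp}, so most of the work has already been isolated into those results; the theorem is really an inductive packaging of them.

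For the implication $i)\Rightarrow ii)$, I would argue by induction on $m$. The base case $m=0$ is immediate: condition $i)$ with $j=0$ says that $T$ itself has boost-order $\leq s$. For the inductive step, suppose $\nabla^{m}T$ has boost-order $\leq s$; I want to upgrade this to $\nabla^{m+1}T$. The difficulty is that merely knowing $\nabla^m T$ has boost-order $\leq s$ is not enough to apply Proposition \ref{CovProp} — that proposition requires the full stability hypothesis $(\mathcal{L}_{k})^{j}(\nabla^m T)$ has boost-order $\leq s-j$ for all $j\geq 0$. So the correct inductive hypothesis must be the stronger statement that $\nabla^m T$ satisfies condition $i)$ (with the same $s$), not merely that it has boost-order $\leq s$. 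With this strengthened hypothesis, Proposition \ref{CovProp} applies directly and yields that $\nabla(\nabla^m T)=\nabla^{m+1}T$ again satisfies condition $i)$, closing the induction. Since satisfying condition $i)$ implies in particular boost-order $\leq s$ (take $j=0$), this gives $ii)$ for every $m$.

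For the reverse implication $ii)\Rightarrow i)$, the strategy is to relate the Lie derivative $\mathcal{L}_k$ to the covariant derivative $\nabla$ along $k$. Since $k$ is a Kundt vector field, it is affinely geodesic, shear-free and divergence-free, and by Proposition \ref{KundtChar} it is nil-Killing with respect to $\lambda$, so $\mathcal{L}_k g$ has type $III$. The idea is that for any tensor $S$, the Lie derivative $\mathcal{L}_k S$ differs from the covariant expression $\nabla_k S$ by terms built from $\nabla k$ contracted into $S$; in a degenerate Kundt frame the connection coefficients $\Gamma_{\alpha\beta\gamma}$ vanish on strictly positive boost-weight indices, which controls how much these correction terms can raise the boost-order. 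Concretely, I would show that each application of $\mathcal{L}_k$ lowers the boost-order by at least one: if $S$ has boost-order $\leq s'$ and every $\nabla^m S$ has boost-order $\leq s$, then $\mathcal{L}_k S$ has boost-order $\leq s'-1$. Iterating this yields $(\mathcal{L}_k)^j T$ of boost-order $\leq s-j$ once we know all covariant derivatives of $T$ stay at boost-order $\leq s$.

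I expect the main obstacle to be the reverse direction $ii)\Rightarrow i)$, specifically controlling the boost-order drop under $\mathcal{L}_k$. The delicate point is that $\mathcal{L}_k$ and $\nabla$ do not commute in a way that preserves the frame cleanly, so I would express $\mathcal{L}_k S$ in the degenerate Kundt frame and track, component by component, how the positive-boost-weight vanishing of the $\Gamma$'s forces the highest boost-weight component of $\mathcal{L}_k S$ to cancel. In practice this is where one invokes both parts of Characterization \ref{DegKundtFrame} — the vanishing of $\Gamma$ on positive indices and the higher-order annihilation $k^{(s+1)}\Gamma=0$ on boost-weight $-s$ indices — to guarantee that each of the $j$ successive Lie derivatives genuinely costs one unit of boost-order rather than stalling. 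Once that single-step drop is in hand, the iteration is routine and Lemma \ref{componentlemma} is not even needed for this direction; it is the forward direction that leans on Proposition \ref{CovProp}.
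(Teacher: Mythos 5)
Your forward implication is correct and is exactly the paper's argument: the strengthened inductive hypothesis (that $\nabla^{m}T$ satisfies condition $i)$ in full, not merely boost-order $\leq s$) fed through Proposition \ref{CovProp} is precisely how the paper closes that induction. The gap is in the reverse direction, and it is twofold. First, your single-step lemma is false as stated. Take flat space in Kundt form, $g=2\,du\,dv+\delta_{ij}dx^{i}dx^{j}$, $k=\partial_{v}$, $\lambda=\mathrm{span}(\partial_{v})$ (trivially degenerate Kundt), and $S=v\,du\otimes du$. Here $du$ has boost-weight $-1$ and $dv$ has boost-weight $+1$, so $S$ has boost-order $s'=-2$; since the Christoffel symbols vanish, $\nabla S=dv\otimes du\otimes du$ has boost-order $-1$ and $\nabla^{m}S=0$ for $m\geq 2$, so every $\nabla^{m}S$ has boost-order $\leq s=-1$. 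Yet $\mathcal{L}_{k}S=du\otimes du$ has boost-order $-2$, not $\leq s'-1=-3$. The drop can only be measured against $s$, not against the tensor's own boost-order $s'$: condition $i)$ is really a \emph{graded} family of conditions --- components of boost-weight $s+1-j$ must be annihilated by $(k)^{j}$, which is the content of Lemma \ref{componentlemma} --- and a uniform one-unit drop per Lie derivative cannot reproduce this grading.

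Second, even the corrected one-step statement (all $\nabla^{m}S$ of boost-order $\leq s$ implies $\mathcal{L}_{k}S$ of boost-order $\leq s-1$) cannot be iterated as you claim, because after one application of $\mathcal{L}_{k}$ the hypothesis $ii)$ is no longer available: you control $\nabla^{m}T$, not $\nabla^{m}(\mathcal{L}_{k}T)$, and commuting $\mathcal{L}_{k}$ past $\nabla^{m}$ produces terms built from second derivatives of $k$ and curvature contracted into lower covariant derivatives of $T$, whose boost-orders require exactly the component-level bookkeeping you discarded. This is why the paper's proof of $ii)\Rightarrow i)$ runs the other way: it expands $T=T_{\alpha_{1}\cdots\alpha_{r}}w^{\alpha_{1}}\cdots w^{\alpha_{r}}$ in a degenerate Kundt coframe, applies the already-proved forward implication to the coframe products $w^{\alpha_{1}}\cdots w^{\alpha_{r}}$, runs a double induction showing $\nabla^{m_{1}+m_{2}+1}T_{\alpha_{1}\cdots\alpha_{r}}$ has boost-order $\leq m_{1}$ whenever the index has boost-weight $s-m_{1}$, contracts $t$ covariant derivatives with $k$ to get $(k)^{t}(T_{\alpha_{1}\cdots\alpha_{r}})=0$ on boost-weight $s-t+1$ components, and then invokes precisely the $\Leftarrow$ direction of Lemma \ref{componentlemma} --- the lemma you assert is not needed --- to convert these scalar conditions back into statement $i)$. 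So the reverse half of your proposal must be replaced, not merely fleshed out.
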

\begin{proof}

$"i)\Rightarrow ii)"$ If $T$ satisfies the hypothesis of $i)$, then by proposition \ref{CovProp}, $\nabla T$ satisfies the same hypothesis. In particular $\nabla T$ is of boost-order $\leq s.$ We can continue this argument inductively, showing that $\nabla^{m}T$ if of boost-order $\leq s,$ for all $m\geq 0,$ proving the forward implication.

$"ii)\Rightarrow i)"$ Let $\{k,l,m_i\}=\{e_1,\dots,e_n\}$ be a degenerate Kundt frame completing $k$ on some open subset $V\subset U$, with  dual $\{w^{1},\dots w^{n}\}$. For such frames we know that if $w^{i_1}\cdots w^{i_r}$ is a tensor product of coframe elements having boost-weight $q$ then \begin{equation}(\mathcal{L}_{k})^{j} (w^{i_1}\cdots w^{i_r})\end{equation} has boost order $q-j,$ for each $j\geq 0.$ It therefore follows by the above forward implication, $"i)\Rightarrow ii)"$, that $\nabla^{m}(w^{i_1}\cdots w^{i_r})$ is of boost-order $\leq q,$ for all $m\geq 0.$ Writing
\begin{equation}
    T=T_{\alpha_{1}\cdots \alpha_{r}}w^{\alpha_{1}}\cdots w^{\alpha_{r}},
\end{equation}
we show inductively that if $m_1\geq 0$ and $\alpha_{1}\cdots \alpha_{r}$ has boost weight $s-m_1,$ then \begin{equation}\nabla^{m_1 +m_2+1}T_{\alpha_{1}\cdots \alpha_{r}}\end{equation} has boost-order $\leq m_1,$ for all $m_2\geq 0.$ 

Since $T$ and $\nabla T$ have boost-order $\leq s$ and
\begin{equation}
    \nabla T = (\nabla T_{\alpha_{1}\cdots \alpha_{r}})w^{\alpha_{1}}\cdots  w^{\alpha_{r}}+T_{\alpha_{1}\cdots \alpha_{r}}\nabla (w^{\alpha_{1}}\cdots w^{\alpha_{r}}).
\end{equation}
 it follows that if $\alpha_{1}\cdots\alpha_{r}$ has boost-weight $s,$ then $(\nabla T_{\alpha_{1}\cdots \alpha_{r}})$ is of boost-order $\leq 0$. This proves the statement for $(m_{1},m_{2})=(0,0).$
 
 Now suppose that $N\geq0$ is an integer such that claim is true for $(m_1,m_2)$ such that $m_{1}+m_2\leq N.$ The $(N+2)$-th covariant derivative $\nabla^{(N+2)}T$ can be expressed as a sum of terms 
 \begin{equation}\label{a}
 (\nabla^{N+2}T_{\alpha_{1}\cdots \alpha_{r}})w^{\alpha_{1}}\cdots w^{\alpha_{r}}
 \end{equation}
and 
\begin{equation}\label{aa}
     (\nabla^{q_{1}}T_{\alpha_{1}\cdots \alpha_{r}})\nabla^{q_{2}}(w^{\alpha_{1}}\cdots w^{\alpha_{r}})
\end{equation}
where $q_{1}\leq N+1$ and $q_{1}+q_{2}=N+2.$  

Suppose that $\alpha_{1}\cdots \alpha_{r}$ is an index of b.w. $s-t$, for some $t\geq 0.$ If $q_{1}\leq t,$ then the expression \eqref{aa} is of boost-order $\leq s$ since each successive covariant derivative raises the boost-order by a value of at most one. If $q_{1}>t$ then $q_{1}=t+(q_{1}-t-1)+1$ with $t+(q_{1}-t-1)\leq N.$ Therefore the induction hypothesis implies that $(\nabla^{q_{1}}T_{\alpha_{1}\cdots \alpha_{r}})$ has boost-order $\leq t.$ This shows that for any indices $\alpha_{1}\cdots \alpha_{r}$ the expression \eqref{aa} has boost-order $\leq s.$ 

Since by assumption $\nabla^{N+2}T$ has boost-order $\leq s,$ it follows that the terms of the form \eqref{a} must also be of boost-order $\leq s.$ Hence if $\alpha_{1}\cdots \alpha_{r}$ is an index of b.w. $s-t$, for some $t\geq 0,$ then
\begin{equation}
    (\nabla^{N+2}T_{\alpha_{1}\cdots \alpha_{r}})=(\nabla^{t+(N+1-t)+1}T_{\alpha_{1}\cdots \alpha_{r}})
\end{equation}
has boost-order $\leq t.$ Thus the statement is true for $(m_{1},m_{2})$ such that $m_{1}+m_{2}\leq N+1.$ By induction this shows that the statement holds true for any pair of integers $(m_{1},m_{2}).$

In particular, if $\alpha_{1}\cdots\alpha_{r}$ has b.w. $s-t+1$ for some $t\geq 0,$ then $\nabla^{t}T_{\alpha_{1}\cdots \alpha_{r}}$ is of boost-order $\leq t-1,$ implying that
\begin{equation}
    (\nabla^{t}T_{\alpha_{1}\cdots \alpha_{r}})(\underbrace{k,\dots,k}_{t})=(k)^t(T_{\alpha_{1}\cdots \alpha_{r}})=0.
\end{equation}
By lemma \ref{componentlemma} the tensor $T$ must therefore satisfy the hypothesis of $i)$ on $V.$ This finishes the proof.
%



    

\end{proof}
\section{SPI-preserving deformations}\label{spipreservingdefs}
In \cite{CharBySpi} and \cite{SCPI} the authors study deformations of Lorentzian metrics in which the spi's of the curvature tensor and all its covariant derivatives stay unchanged throughout the deformation. They give the following definition:

\begin{definition}\label{Ideg}
	A Lorentzian metric $g$ on a manifold $M$ is said to be $\mathcal{I}$-degenerate if there exists a smooth family of metrics $g_{t},$ for t in some interval $[0,\epsilon)$, such that the following hold:
	\begin{enumerate}[i)]
		\item $g_{0}=g.$
		\item $g_{t}$ is not isometric to $g_{0},$ for all $t>0.$
		\item The spi's of $g_t$ are identical, for each $t.$
	\end{enumerate}
\end{definition}
In particular the authors have shown that any four dimensional Lorentzian metric $g$ which is $\mathcal{I}$-degenerate must be of the degenerate Kundt class, which has been discussed in the previous section. In this case the deformation $g_t$ achieving the $\mathcal{I}$-degeneracy goes in the direction of type $III$ tensors with respect to $g$ and $\lambda,$ i.e., there exists a family of tensors $T_t$ of type $III$ such that $g_t= g+T_t$, for all $t.$


In this section we shall seek to understand the deformations in the direction of type $III$ of metrics whose curvature tensors are of type $II.$ We will focus on deformations which stay in the degenerate Kundt class in order that the curvature and its covariant derivative to all orders stay of type $II$ throughtout the deformation. In addition to being interesting in its own right, we shall also use the results in order to understand which nil-Killing vector fields preserve the scalar polynomial curvature invariants of $g.$

Let $(M,g,\lambda)$ be a Lorentzian manifolds along with a null-distribution. Motivated by the discussion above we now consider a deformation $g_t$ such that the deformation tensor $h_s:=\frac{\partial}{\partial t}g_t$ if of type $III$ with respect to $(g,\lambda),$ for each $t.$ Such deformations have a few amenable properties that we point out:

The difference $(g_t-g)$ is of type $III$ with respect to $(g,\lambda).$ Hence $\lambda$ remains a null-distribution for each of the metrics $g_t.$ Furthermore the distribution $\lambda^{\perp}$ remains invariants with respect to which metric the orthogonal compliment is defined. In particular the algebraic classification of tensors is identical for each triple $(M,g_t,\lambda).$ Moreover, for a tensor $P$ of type $II$ with respect to $\lambda,$ the spi's of $P$ are the same when taken with respect to any of the metrics $g_t	$.

Below we list  evolution equations \cite{Hamiltons,Topping} for the Ricci scalar  and the Riemann curvature tensor during the deformation. The dependence on the parameter $t$ is implicit.



\begin{equation}\label{RSEvolution}
    \partial _{t} R = \tensor{h}{_a_b}\tensor{R}{^a^b}-\nabla^{a}\nabla^{b}\tensor{h}{_a_b} + \Delta \tensor{h}{^{a}_{a}},
\end{equation}

\begin{equation}\label{RiemannDeformation}
\begin{gathered}
(\frac{\partial}{\partial t}R)_{abcd}=\frac{1}{2}[\tensor{R}{^f_b_c_d}h_{fa}-\tensor{R}{^f_a_c_d}h_{fb}]\\
+\frac{1}{2}[\nabla_{c}\nabla_bh_{ad}-\nabla_{d}\nabla_{b}h_{ac} + \nabla_{d}\nabla_{a}h_{bc}-\nabla_{c}\nabla_{a}h_{bd}].
\end{gathered}
\end{equation}


Furthermore for $m\geq 1$
\begin{equation}\label{InductiveDeformation}
\begin{gathered}
\partial_{s}(\nabla^{m}Rm)=(\partial_{s}\nabla)(\nabla^{m-1}Rm)+\nabla\partial_{s}(\nabla^{m-1}Rm)\\
=\nabla h * \nabla^{m-1}Rm + \nabla(\partial_{s}\nabla^{m-1}Rm).
\end{gathered}
\end{equation}
The notation $T*S$ between two tensors $T$ and $S$ is shorthand for some linear combination of traces of $T\otimes S$ giving the correct rank.

\begin{theorem}\label{DefTheorem}
Suppose that $(M,g,\lambda)$ is degenerate Kundt. Let $g_{t}$ be a deformation with $h_t:=\frac{\partial}{\partial t}g_t$ such that w.r.t. $\lambda$, $h_{t}$ is of type $III$ and  $\mathcal{L}_{k}h_{t}$ has boost-order $\leq -2$ for each Kundt vector field $k$, for all $t.$
Then the following properties are equivalent:
\begin{enumerate}[i)]
\item $(\mathcal{L}_{k})^2h_{t}=0$ for all $t$ and each Kundt vector field $k$.
\item $\frac{\partial}{\partial t}(\nabla^{m}R_{t})$ is of type $III$ w.r.t to $\lambda$, for all $t$ and $m\geq 0$.
\item All spi's are preserved throughout the deformation.
\end{enumerate}
\end{theorem}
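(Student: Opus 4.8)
The plan is to establish the cycle $i)\Rightarrow ii)\Rightarrow iii)\Rightarrow i)$, leveraging Theorem~\ref{AlgebraicStability} as the central engine that converts Lie-derivative hypotheses into boost-order control of covariant derivatives. First I would observe that the hypotheses on $h_t$ (type $III$ and $\mathcal{L}_{k}h_t$ of boost-order $\leq -2$) combined with $i)$ state precisely that $(\mathcal{L}_{k})^{j}h_t$ has boost-order $\leq -1-j$ for $j=0,1$ and vanishes for $j\geq 2$; this is exactly condition $i)$ of Theorem~\ref{AlgebraicStability} with $s=-1$. Hence by that theorem, $\nabla^{m}h_t$ has boost-order $\leq -1$ (type $III$) for all $m\geq 0$.

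For $i)\Rightarrow ii)$ I would argue by induction on $m$ using the evolution equations \eqref{RiemannDeformation} and \eqref{InductiveDeformation}. The base case uses \eqref{RiemannDeformation}: each term is either a contraction of the type $II$ Riemann tensor $R_t$ (type $II$ because the deformation preserves degeneracy, as noted before the theorem) with $h_t$ (type $III$), or a second covariant derivative $\nabla\nabla h_t$, which is type $III$ by the stability result just established. A full-boost-weight bookkeeping shows the product of a type $II$ and a type $III$ tensor is type $III$, so $\frac{\partial}{\partial t}R_t$ is type $III$. The inductive step uses \eqref{InductiveDeformation}: the term $\nabla h * \nabla^{m-1}R_t$ pairs the type $III$ tensor $\nabla h$ with the type $II$ tensor $\nabla^{m-1}R_t$, giving type $III$; the term $\nabla(\partial_{s}\nabla^{m-1}R_t)$ is the covariant derivative of a type $III$ tensor by the induction hypothesis. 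Here I must check that $\partial_s\nabla^{m-1}R_t$ is not merely type $III$ pointwise but satisfies the full Lie-derivative stability hypothesis so that its covariant derivative stays type $III$; this requires strengthening the inductive claim to say $\partial_s\nabla^{m-1}R_t$ satisfies condition $i)$ of Theorem~\ref{AlgebraicStability}, which I would carry along throughout the induction.

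The implication $ii)\Rightarrow iii)$ follows from the algebraic observation in Section~2: since each $\frac{\partial}{\partial t}\nabla^{m}R_t$ is type $III$ while $\nabla^{m}R_t$ is type $II$, the derivative of any spi (a full contraction of tensor products of the $\nabla^{m}R_t$, computed via the Leibniz rule) is a sum of full contractions each containing one type $III$ factor against type $II$ factors, and such contractions vanish; thus every spi is constant in $t$. The remaining implication $iii)\Rightarrow i)$ is the contrapositive heart of the theorem and I expect it to be the main obstacle. Here I would suppose $(\mathcal{L}_{k})^2 h_t\neq 0$ and aim to produce an spi that changes. The natural strategy is to identify the lowest-order covariant derivative of curvature whose boost-weight-zero component is moved by the deformation: if $(\mathcal{L}_{k})^2 h_t$ is a nonzero boost-order $\leq -2$ tensor, then by the failure of the stability condition $i)$ in Theorem~\ref{AlgebraicStability}, some $\nabla^{m}h_t$ acquires boost-order $\geq 0$, which feeds through \eqref{RiemannDeformation}–\eqref{InductiveDeformation} to produce a boost-weight-zero contribution in some $\frac{\partial}{\partial t}\nabla^{m}R_t$. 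The delicate point is that a nonzero boost-weight-zero deformation of the curvature need not immediately change an spi, so one must either exhibit a specific nonvanishing contraction or invoke the partial-converse machinery from \cite{SCPI} cited in Section~2 to conclude that a genuinely spi-altering (non-$\mathrm{Sim}(n-2)$) deformation has occurred; reconciling the pointwise algebra with the existence of an actual invariant that detects the change is where the real work lies.
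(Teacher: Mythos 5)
Your cycle $i)\Rightarrow ii)\Rightarrow iii)$ matches the paper: the reformulation of the hypotheses plus $i)$ as condition $i)$ of Theorem~\ref{AlgebraicStability} with $s=-1$ (note that for a rank-2 tensor, boost-order $\leq -3$ forces vanishing, so the $j=2$ condition is exactly $(\mathcal{L}_{k})^2h_t=0$), the induction through \eqref{RiemannDeformation} and \eqref{InductiveDeformation}, and the Leibniz-rule vanishing of contractions between type $II$ and type $III$ tensors are all the paper's steps, and your insistence on strengthening the inductive claim so that $\nabla(\partial_{t}\nabla^{m-1}R_t)$ stays controlled is a legitimate point the paper glosses. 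One caveat: the fact that each $g_t$ remains degenerate Kundt is not available ``as noted before the theorem'' — it is proved inside the paper's argument via Proposition~\ref{DegkundtChar}, using the hypothesis $\mathcal{L}_{k}h_t$ of boost-order $\leq -2$ (so $\mathcal{L}_{k}g_t$ stays type $III$) together with $i)$ (so $(\mathcal{L}_{k})^2g_t=(\mathcal{L}_{k})^2g$ and $(\mathcal{L}_{k})^3g_t=0$); you need this step before Theorem~\ref{AlgebraicStability} can be applied at each $t$.

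The genuine gap is $iii)\Rightarrow i)$, and it is exactly where you say the ``real work lies'' — but the contrapositive strategy you sketch is the wrong road, for the reason you yourself identify: constancy of all spi's constrains the boost-weight-zero data only up to the $Sim(n-2)$ gauge of the partial converse in Section 2, so exhibiting a nonzero boost-weight-zero contribution in some $\partial_t\nabla^{m}R_t$ cannot by itself produce a changing invariant, and the machinery of \cite{SCPI} does not close this loop pointwise. The paper avoids the problem entirely by using a single invariant, the Ricci scalar, \emph{directly}: in \eqref{RSEvolution} the terms $h_{ab}R^{ab}$ and $\Delta h\indices{^a_a}$ vanish because $h_t$ is type $III$ and $R_t$ is type $II$ (the latter from Proposition~\ref{DegkundtChar}, part $i)$, since $(\mathcal{L}_{k})^2g_t$ has boost-order $\leq -2$ under the standing hypotheses alone), so constancy of $R$ forces $(\nabla^{t})^a(\nabla^{t})^b(h_t)_{ab}=0$. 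Then one chooses frames $\{k_t,l_t,(m_t)_i\}$ with $k_t$ Kundt and $\nabla^{t}_{k_t}l_t=\nabla^{t}_{k_t}(m_t)_i=0$, writes the type $III$ tensor as $h_t=a_t\,k_t\otimes_S k_t+b^{i}_t\,k_t\otimes_S(m_t)_i$, notes that the boost-order $\leq -2$ hypothesis on $\mathcal{L}_{k_t}h_t$ kills $k_t(b^i_t)$, and computes that the double divergence reduces to $\nabla^{t}_{k_t}\nabla^{t}_{k_t}a_t$, whose vanishing is precisely $(\mathcal{L}_{k})^2h_t=0$ for every Kundt $k$. So no detection argument over all spi's is needed — one evolution equation plus the algebraic normal form of a type $III$ symmetric 2-tensor suffices — and without this (or some substitute) your proposal does not establish $iii)\Rightarrow i)$.
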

\begin{proof}
$"i)\Rightarrow ii)"$ First we show that $g_t$ is degenerate Kundt with respect to w.r.t. $\lambda,$ for all $t.$ Let $k$ be a Kundt vector field of $(M,g,\lambda)$ belonging to $\lambda$. 

By assumption we see that 
\begin{equation}\mathcal{L}_{k}h_{t}=\frac{\partial}{\partial t}\mathcal{L}_{k}g_t\end{equation} is of boost-order $\leq -2,$ for all $t.$ Therefore $\mathcal{L}_{k}g_{t}-\mathcal{L}_{k}g$ is of boost-order $\leq -2,$ and hence $\mathcal{L}_{k}g_{t}$ is of type $III$ w.r.t. $\lambda,$ for all $t.$ Hence $k$ is also a Kundt vector field for $(M,g_t,\lambda),$ showing that the triple is Kundt. Furthermore $i)$ implies that $\frac{\partial}{\partial t}(\mathcal{L}_{k})^2g_t=(\mathcal{L}_{k})^2h_t=0,$ showing that \begin{equation}
    (\mathcal{L}_{k})^2g_t=(\mathcal{L}_{k})^2g,
\end{equation}
for all $t.$ By proposition \ref{DegkundtChar} it follows that $(M,g_t,\lambda)$ is degenerate Kundt w.r.t. $\lambda, $ for all $t.$

Since each $g_t$ is degenerate Kundt w.r.t. $\lambda$  and $(\mathcal{L}_{k})^2h_t=0$ for any Kundt vector field $k,$ theorem \ref{AlgebraicStability} implies that $\nabla^{m}h_{t}$ is of type $III$ w.r.t. $\lambda$ for all $m\geq 0$. Furthermore $\nabla^{m} R_{t}$ is of type $II$ w.r.t. $\lambda$ for all $m\geq 0$ and $t.$ It follows from equations \eqref{RiemannDeformation} and \eqref{InductiveDeformation} that $\frac{\partial}{\partial t}(\nabla^{m}R)$ is of type $III,$ for all $m\geq 0.$ 

$"ii)\Rightarrow iii)"$ By the assumptions of $ii)$ it follows that
\begin{equation}
    \nabla^{m}R_t-\nabla^{m}R_0,
\end{equation}
is of type $III$ and therefore  $\nabla^{m}R_t$ is of type $II$ w.r.t. $\lambda,$ for all $m\geq 0$ and $t$.
Since $h_{t}$ is of type $III,$ raising the index of a curvature tensor $\nabla^{m}R_t$ by using $g_t$ gives a tensor whose variation $\frac{\partial}{\partial t}(g_t*\nabla^{m}R_t)$ is of type $III$ w.r.t. $\lambda.$ In particular, it follows from the discussion in section $1$ that the values of each spi of $g_t$ remain fixed throughout the deformation.

$iii)\Rightarrow i)$ By proposition \ref{DegkundtChar} our assumptions show that the curvature $Rm_t$ of $g_t$ is of type $II$ w.r.t. $\lambda,$ for all $t.$ Since $h_t$ is of type $III$ w.r.t. $\lambda$ it follows from equation \eqref{RSEvolution} and the constancy of the Ricci scalar throughout the deformation that \begin{equation}(\nabla^{t})^{a}(\nabla^{t})^{b}(h_t)_{ab}=0.\end{equation} Since $(M,g_t,\lambda)$ is Kundt for all t, we can find a family of frames $\{k_t,l_t,(m_{t})_{i}\}$  such that $k_t$ is a Kundt vector field for $(M,g_t,\lambda)$ and
\begin{equation}
    \nabla^{t}_{k_{t}}(m_t)_{i}=0,\quad \nabla^{t}_{k_t}l_t=0,
\end{equation}
for all $t$ and $i.$ We can find families of functions $a_t$ and $b^{i}_t$ such that the deformation tensor is given by
\begin{equation}
    h_t = a_t k_t\otimes_{S} k_t + b^{i}_{t}(k_t\otimes_{S} (m_{t})_{i}), 
\end{equation}
for all $t.$ Since $\mathcal{L}_{k_t}h_t$ is of boost-order $\leq -2,$ we see that $k_t(b_t^{i})=0,$ for all $i$ and $t.$ By the Kundt property it therefore follows that 
\begin{equation}
    0=(\nabla^{t})^{a}(\nabla^{t})^{b}(h_t)_{ab}=\nabla^{t}_{k_t}\nabla^{t}_{k_t}a_t,
\end{equation}
which shows that $(\mathcal{L}_{k})^2h_t=0,$ for every Kundt vector field $k.$
\end{proof}

Recall \cite{KundtSpacetimes} that if $(M,g,\lambda)$ is degenerate Kundt we can locally express the metric by
\begin{equation}\label{DegkundtCoordinates}
    g=2du(dv+Hdu + W_{i}dx^{i}) + \tilde{g}_{ij}dx^{i}dx^{j}
\end{equation}
where $\frac{\partial}{\partial v}$ is a Kundt vector field belonging to $\lambda$ and the functions $H,W_{i}$ take the form
\begin{equation}
    H(u,v,x^{k})=v^2H^{(2)}(u,x^k)+vH^{(1)}(u,x^k)+H^{(0)}(u,x^k)
\end{equation}
and
\begin{equation}
W_{i}(u,v,x^{k})=vW^{(1)}_{i}(u,x^{k}) +W^{(0)}_{i}(u,x^{k}).
\end{equation}
Now suppose that $g_t$ is a deformation of $g$ with deformation tensor $h_t=\frac{\partial}{\partial t}g_t$ satisfying $i),ii)$ and $iii)$ of theorem \ref{DefTheorem}. Then there exists families of smooth function $P^{(j)}(t,u,x^k),$ for $j=0,1,$ and $Q_{i}(t,u,x^{k})$ for $i=1,\dots n-2$ such that in coordinates
\begin{equation}\label{DeformationCoordinates}
    h_t = du[(vP^{(1)}(t,u,x^{k})+P^{(0)}(t,u,x^k))du + Q_{i}(t,u,x^{k})dx^{i}],
\end{equation}
for all $t.$ These are the local deformations that preserve the values of the spi's which have been considered in \cite{SCPI}. Our work has shown that deformations of this type are exactly those such that the deformation tensor and all its covariant derivatives are of type $III$ with respect to $\lambda.$

\begin{remark}[Existence of deformations preserving spi's for degenerate Kundt spacetimes]
The expression \eqref{DeformationCoordinates} shows if $(M,g,\lambda)$ is degenerate Kundt, then we can always deform the metric on a neighborhood of any given point in a manner satisfying the hypothesis of theorem \ref{DefTheorem}, ensuring that the spi's are preserved. In order to use this for ensuring the existence of a deformation $g_t$ of the same nature on the whole manifold $M,$ we can attempt to proceed as follows:

Suppose we have a locally finite $\{U_{\alpha}\}_{\alpha\in I}$ covering of $M$ such that each $U_{\alpha}$ admits coordinates of the form \eqref{DegkundtCoordinates}. On each such neighborhood we choose a deformation $g^{\alpha }_{t}$ of $g$ restricted to $U_{\alpha}$ satisfying the hypothesis of theorem \ref{DefTheorem}. Now take a partition of unity, $\{\psi^{\alpha}\}_{\alpha\in I}$, subordinate to the covering $\{U_{\alpha}\}_{\alpha\in I}$. We construct a global deformation $g_t$ on $M$ by letting
\begin{equation}
    g_t:=\sum_{\alpha\in I}\psi^{\alpha}g^{\alpha}_{t}.
\end{equation}
We see that $g_t$ gives a well-defined Lorentzian metric for each $t$ since $g_t^\alpha-g$ is a tensor of type $III$, for all $t$ and $\alpha.$ Moreover the deformation itself will be such that the deformation tensor 
\begin{equation}
    h_{t}=\sum_{\alpha\in I}\psi^{\alpha}h^{\alpha}_{t},
\end{equation}
is a tensor of type $III$ w.r.t. $\lambda$, for all $t,$ and thus $(M,g_t,\lambda)$ is Kundt and the whole family have the same collection of Kundt vector fields. However we see that if $k$ is a Kundt vector field of $g$ belonging to $\lambda$, then
\begin{equation}
    \mathcal{L}_{k}h_{t}=\mathcal{L}_{k}(\sum_{\alpha\in I}\psi^{\alpha}h^{\alpha}_{t})=\sum_{\alpha\in I}[(\mathcal{L}_{k}\psi^\alpha)h_t^\alpha+\psi^\alpha\mathcal{L}_{k}h_t^\alpha].
\end{equation}
In general $k(\psi^{\alpha})$ must vanish in order for this to be a tensor of boost-order $\leq -2.$ Thus in order to prove global existence by this method, we must ensure the existence of a partition of unity subordinate to the covering such that $\psi^\alpha$ is constant along the integral curves of $\lambda,$ for all $\lambda.$ For instance this shows that if $\alpha\in I$, then $U_{\alpha}$ by necessity must contain each maximal integral curve of $\lambda$ it intersects. Therefore, given a degenerate Kundt spacetime we conclude the following: Even though we can always produce local deformations of the metric which preserve the values of the spi's, such deformations might not exist globally.
\end{remark}


Let $(M,g,\lambda)$ be degenerate Kundt. Now we investigate deformations $g_t$ which can be achieved as a pull-back of the metric by the one-parameter group of diffeomorphisms of a vector field such that the deformation tensor $h_t$ is of type $III$ w.r.t $(M,g,\lambda)$, for all $t.$ Given a vector field $X$ with flow $\phi_t$ the deformation tensor $h_t$ of $g_t:=(\phi_{t})^*g$ takes the form
\begin{equation}
    h_t=\frac{\partial}{\partial t}((\phi_{t})^*g)=\lim_{s\rightarrow 0}\frac{1}{s}((\phi_{t+s})^*g-(\phi_t)^*g)=\phi_{t}^{*}\mathcal{L}_{X}g.
\end{equation}
Hence $h_t$ is of type $III$ with respect to $(M,g,\lambda)$, for all $t,$ iff. $X$ is nil-Killing with respect to $\lambda$ and $[X,\lambda]\in\lambda.$ Such vector fields $X$ are exactly the ones belonging to the Lie algebra $\mathfrak{g}_{\lambda}$ from section \ref{Lie algebra}. We can therefore use the local expression of nil-Killing vector fields preserving algebraic structure found in proposition \ref{NilKillinInCoordinates} in order to show the following: Among the local deformations \eqref{DeformationCoordinates} on the open set $U$ we can always find ones which give $\mathcal{I}$-degeneracies, in the sense that they immediately leave the orbit of $g$ on $U.$ 

Before we set out to do this, we present some results giving conditions for a vector field to preserve the spi's. For now we can drop the assumption that $X$ must preserve algebraic structure.

Suppose again that $(M,g,\lambda)$ is degenerate Kundt. Let $X$ be a vector field with flow $\phi_t$ such that $X$ is nil-Killing w.r.t. $\lambda$ and $g_t=(\phi_t)^*g$ with deformation tensor $h_t=\frac{\partial}{\partial t}g_t$. Then by definition
\begin{equation}
    h_{0}=\mathcal{L}_{X}g,
\end{equation}
which is of type $III$ w.r.t $\lambda$. Furthermore by the invariance of the covariant derivative and the Riemann tensor we see that
\begin{equation}
    \mathcal{L}_{X}(\nabla^{m}Rm)=\frac{\partial}{\partial t}\vert_{t=0}(\nabla_{t})^mRm_t.
\end{equation}
In particular we can substitute $\mathcal{L}_{X}g$ for $h_{ab}$ in the expressions \eqref{RiemannDeformation} and \eqref{InductiveDeformation} in order to find the expression for $\mathcal{L}_{X}(\nabla^{m}Rm)$.

\begin{proposition}\label{EasySpiPreservation}
Suppose that $(M,g,\lambda)$ is degenerate Kundt. Let $X$ be a vector field on $M$ satisfying the following properties for any kundt vector field $k$:
\begin{enumerate}[i)]
    \item $X$ is nil-Killing w.r.t. $\lambda.$
    \item $\mathcal{L}_{k}\mathcal{L}_Xg$ is of boost-order $\leq -2$ w.r.t. $\lambda.$
\end{enumerate}
Then $X(\mathcal{I})=0$, for all scalar curvature invariants $\mathcal{I}$ of $g,$ iff. $(\mathcal{L}_{k})^2\mathcal{L}_{X}g=0,$ for every Kundt vector field $k.$
\end{proposition}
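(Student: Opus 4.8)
The plan is to read this proposition as the infinitesimal ($t=0$) counterpart of Theorem \ref{DefTheorem}, applied to the deformation $g_t:=(\phi_t)^*g$ generated by the flow $\phi_t$ of $X$, whose deformation tensor is $h:=\mathcal{L}_X g$. Hypotheses i) and ii) say exactly that $h$ is of type III and that $\mathcal{L}_k h$ has boost-order $\leq -2$ for every Kundt vector field $k$, i.e. the standing assumptions of Theorem \ref{DefTheorem} read off at $t=0$. The bridge to the two sides of the equivalence is naturality: for any spi $\mathcal{I}$ one has $\mathcal{I}(g_t)=\phi_t^*\mathcal{I}(g)$, so $X(\mathcal{I})=\partial_t\vert_{t=0}\mathcal{I}(g_t)$, and, as already recorded in the text preceding the proposition, $\mathcal{L}_X(\nabla^m Rm)=\partial_t\vert_{t=0}(\nabla_t)^m Rm_t$. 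Hence I may substitute $h_{ab}=(\mathcal{L}_X g)_{ab}$ directly into \eqref{RSEvolution}, \eqref{RiemannDeformation} and \eqref{InductiveDeformation} and work entirely at $t=0$.

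For the implication $(\mathcal{L}_k)^2\mathcal{L}_X g=0\Rightarrow X(\mathcal{I})=0$, I would first observe that, combined with i) and ii), the vanishing of $(\mathcal{L}_k)^2h$ makes $(\mathcal{L}_k)^j h$ of boost-order $\leq -1-j$ for all $j\geq 0$, so Theorem \ref{AlgebraicStability} with $s=-1$ gives that $\nabla^m h$ is type III for every $m$. Feeding this into \eqref{RiemannDeformation} and the unrolled recursion \eqref{InductiveDeformation} — where, since $\nabla g=0$, each derivative of the connection variation has the schematic form $g^{-1}*\nabla^{(\cdot)}h$ (type III) contracted against $\nabla^{(\cdot)}Rm$ (type II on the degenerate Kundt background) — shows $\mathcal{L}_X(\nabla^m Rm)$ is type III for all $m$. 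Finally, writing $\mathcal{I}$ as a full contraction of copies of $\nabla^m Rm$ with $g$ and $g^{-1}$ and applying the Leibniz rule, every summand of $X(\mathcal{I})$ carries exactly one type III factor (one of $\mathcal{L}_X g$, $\mathcal{L}_X g^{-1}=-g^{-1}*(\mathcal{L}_X g)*g^{-1}$ as in Proposition \ref{SpiPreservationOfTensor}, or $\mathcal{L}_X\nabla^m Rm$) and type II factors otherwise; it is therefore a full contraction of a boost-order $\leq -1$ tensor and vanishes, giving $X(\mathcal{I})=0$.

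For the converse I would use only the Ricci scalar $R$, which is an spi, so $X(R)=0$. Reading \eqref{RSEvolution} at $t=0$ with $h=\mathcal{L}_X g$, the terms $h_{ab}R^{ab}$ and $\Delta(g^{ab}h_{ab})$ vanish because each is a full contraction of the type III tensor $h$ against type II data; hence $X(R)=-\nabla^a\nabla^b h_{ab}$ and so $\nabla^a\nabla^b h_{ab}=0$. Fixing a Kundt vector field $k$ and a degenerate Kundt frame $\{k,l,m_i\}$ for $g$, I would expand the type III tensor as $h=a\,k^{\natural}\otimes_{S}k^{\natural}+b^{i}\,k^{\natural}\otimes_{S}m_i^{\natural}$; hypothesis ii) forces $k(b^i)=0$, and the Kundt property collapses $\nabla^a\nabla^b h_{ab}$ to $\nabla_k\nabla_k a$, whence $\nabla_k\nabla_k a=0$ and therefore $(\mathcal{L}_k)^2h=0$. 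This is precisely the $iii)\Rightarrow i)$ computation of Theorem \ref{DefTheorem}, now carried out at $t=0$, and since $k$ was an arbitrary Kundt vector field the claim follows.

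The main obstacle, and the reason this is not a one-line corollary of Theorem \ref{DefTheorem}, is that $X$ is assumed only nil-Killing and not to preserve algebraic type: one has merely $[X,\lambda]\subset\lambda^{\perp}$ rather than $[X,\lambda]\subset\lambda$. Consequently the pulled-back deformation tensor $h_t=\phi_t^*\mathcal{L}_X g$ need not remain type III for $t\neq 0$, so the hypotheses of Theorem \ref{DefTheorem} need not hold along the whole flow and one cannot invoke it as a black box. The care required is therefore to run both implications purely infinitesimally at $t=0$, leaning on the naturality identities above together with Theorem \ref{AlgebraicStability} in place of the ``degenerate Kundt for all $t$'' argument used inside Theorem \ref{DefTheorem}.
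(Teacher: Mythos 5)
Your proposal is correct and takes essentially the same route as the paper: the paper's proof is literally the one-liner ``substitute $\mathcal{L}_{X}g$ for $h_{0}$ and follow the proof of theorem \ref{DefTheorem}'', relying on the preceding remarks that $h_{0}=\mathcal{L}_{X}g$ and $\mathcal{L}_{X}(\nabla^{m}Rm)=\partial_{t}\vert_{t=0}(\nabla_{t})^{m}Rm_{t}$, which is exactly the $t=0$ argument you spell out (Theorem \ref{AlgebraicStability} with $s=-1$ plus the evolution equations for the forward direction, the Ricci-scalar equation \eqref{RSEvolution} and the frame expansion of $h$ for the converse). Your closing observation about why one cannot invoke Theorem \ref{DefTheorem} as a black box --- $X$ is not assumed to preserve algebraic type, so $h_{t}$ need not stay type $III$ for $t\neq 0$ --- matches the paper's explicit remark that ``for now we can drop the assumption that $X$ must preserve algebraic structure,'' and is precisely why the paper, like you, works only at $t=0$.
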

\begin{proof}
By the above comments this proposition can be proved by substituting $\mathcal{L}_{X}g$ for $h_{0}$ and following the proof of theorem \ref{DefTheorem}.
\end{proof}

Suppose that $(M,g,\lambda)$ is a Kundt spacetime and let $\mathfrak{g}_{\lambda}$ be the Lie algebra of algebra preserving nil-Killing vector fields w.r.t $\lambda$. We define
\begin{equation}\label{spipreservingLiealg}
    \mathfrak{h}_{\lambda}
\end{equation}
to be the
collection of vector fields $X\in \mathcal{X}(M)$ which satisfy the following properties:
\begin{enumerate}[i)]
    \item $X\in \mathfrak{g}_{\lambda}.$
    \item $\mathcal{L}_{k}\mathcal{L}_{X}g$ is of boost-order $\leq -2$ w.r.t. $\lambda,$ for each Kundt vector field $k.$
    \item $(\mathcal{L}_{k})^2\mathcal{L}_{X}g=0,$ for each Kundt vector field $k.$
\end{enumerate}
\begin{proposition}
Suppose $(M,g,\lambda)$ is a degenerate Kundt spacetime. Then $\mathfrak{h}_{\lambda}$ is a Lie subalgebra of $\mathfrak{g}_{\lambda}$.
\end{proposition}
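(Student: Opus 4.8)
The plan is to first rewrite membership in $\mathfrak{h}_\lambda$ in a $k$-independent form and then reduce closure under the bracket to a single stability lemma for $\mathcal{L}_X$.

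First I would reformulate the defining conditions. Fixing a Kundt vector field $k$ and setting $T:=\mathcal{L}_X g$, conditions i)--iii) assert precisely that $(\mathcal{L}_k)^jT$ has boost-order $\leq -1-j$ for $j=0,1,2$: the case $j=0$ is the type III part of $X\in\mathfrak{g}_\lambda$, the case $j=1$ is condition ii), and the case $j=2$ reads $(\mathcal{L}_k)^2\mathcal{L}_X g=0$, which is equivalent to boost-order $\leq -3$ because a symmetric rank-$2$ tensor has minimal boost-weight $-2$, so boost-order $\leq -3$ forces vanishing; the cases $j\geq 3$ then follow. Applying Theorem~\ref{AlgebraicStability} with $s=-1$ converts this into the statement that $\nabla^m\mathcal{L}_X g$ is of type III for all $m\geq 0$. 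Since this last condition never mentions $k$, the quantifier ``for each Kundt $k$'' is automatically equivalent to ``for some Kundt $k$,'' and I obtain the clean description $\mathfrak{h}_\lambda=\{X\in\mathfrak{g}_\lambda:\nabla^m\mathcal{L}_X g\text{ is of type III for all }m\geq 0\}$.

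The heart of the argument is the following stability lemma. Let $\mathcal{C}$ be the $\mathbb{R}$-vector space of tensors $A$ with $\nabla^m A$ of type III for all $m\geq 0$; I would prove that $X\in\mathfrak{h}_\lambda$ and $A\in\mathcal{C}$ imply $\mathcal{L}_X A\in\mathcal{C}$. The tool is the connection-difference tensor $\mathcal{A}:=\mathcal{L}_X\nabla$, whose components are built from $\nabla\mathcal{L}_X g$; since $X\in\mathfrak{h}_\lambda$ and $\nabla g^{-1}=0$, each $\nabla^i\mathcal{A}$ is a contraction of $g^{-1}$ with $\nabla^{i+1}\mathcal{L}_X g$ and is therefore of type III. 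Iterating the identity $\nabla\mathcal{L}_X B=\mathcal{L}_X\nabla B-\mathcal{A}*B$ produces an expansion $\nabla^m(\mathcal{L}_X A)=\mathcal{L}_X\nabla^m A-\sum_{i+j=m-1}c_{ij}\,(\nabla^i\mathcal{A})*(\nabla^j A)$. Because every element of $\mathfrak{g}_\lambda$ is algebra-preserving (Observation~\ref{algebraic preservation}), $\mathcal{L}_X$ does not raise boost-order, so the first term is type III; each remaining term is a contraction of the type III tensor $\nabla^i\mathcal{A}$ with the type III tensor $\nabla^j A$, hence of boost-order $\leq -2$, using that tensor products add boost-orders while contractions never raise them. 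Thus $\nabla^m(\mathcal{L}_X A)$ is type III for all $m$, i.e. $\mathcal{L}_X A\in\mathcal{C}$.

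To conclude, note that $\mathfrak{h}_\lambda$ is an $\mathbb{R}$-subspace of $\mathfrak{g}_\lambda$ since conditions i)--iii) are linear in $X$ (as $\mathcal{L}_{aX+bY}g=a\mathcal{L}_X g+b\mathcal{L}_Y g$ for constant $a,b$). For the bracket I take $X,Y\in\mathfrak{h}_\lambda$; since $\mathfrak{g}_\lambda$ is a Lie algebra, $[X,Y]\in\mathfrak{g}_\lambda$. Writing $\mathcal{L}_{[X,Y]}g=\mathcal{L}_X(\mathcal{L}_Y g)-\mathcal{L}_Y(\mathcal{L}_X g)$ and using that $\mathcal{L}_X g,\mathcal{L}_Y g\in\mathcal{C}$ by the reformulation, the lemma gives $\mathcal{L}_X(\mathcal{L}_Y g),\mathcal{L}_Y(\mathcal{L}_X g)\in\mathcal{C}$, and hence $\mathcal{L}_{[X,Y]}g\in\mathcal{C}$ since $\mathcal{C}$ is a vector space; the reformulation then yields $[X,Y]\in\mathfrak{h}_\lambda$. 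I expect the \emph{main obstacle} to be the stability lemma, and the delicate point is that $X\in\mathfrak{g}_\lambda$ alone does not suffice: one genuinely needs the full hypothesis $X\in\mathfrak{h}_\lambda$ so that not only $\mathcal{A}$ but all of its covariant derivatives $\nabla^i\mathcal{A}$ are type III, which is exactly what makes the boost-order bookkeeping close for every $m$ (for general $X\in\mathfrak{g}_\lambda$ the terms $(\nabla^i\mathcal{A})*\nabla^j A$ could attain boost-order $i-1$ and fail to be type III). Verifying the contraction rule and pinning down the precise form of $\nabla\mathcal{L}_X B=\mathcal{L}_X\nabla B-\mathcal{A}*B$ are the remaining technical points.
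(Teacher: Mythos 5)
Your proof is correct, but it takes a genuinely different route from the paper's. The paper argues entirely at the level of Lie derivatives: for $X,Y\in\mathfrak{h}_{\lambda}$ and a Kundt vector field $k$ it expands $\mathcal{L}_{k}\mathcal{L}_{X}\mathcal{L}_{Y}g$ and $(\mathcal{L}_{k})^{2}\mathcal{L}_{X}\mathcal{L}_{Y}g$ using the commutation rule $\mathcal{L}_{k}\mathcal{L}_{X}=\mathcal{L}_{[k,X]}+\mathcal{L}_{X}\mathcal{L}_{k}$, the fact that $[k,X]$ is again a Kundt vector field (since $X\in\mathfrak{g}_{\lambda}$), and the fact that $\mathcal{L}_{X}$ preserves boost-order; conditions ii) and iii) for $[X,Y]$ then drop out of the Jacobi identity $\mathcal{L}_{[X,Y]}g=\mathcal{L}_{X}\mathcal{L}_{Y}g-\mathcal{L}_{Y}\mathcal{L}_{X}g$, and condition i) is inherited because $\mathfrak{g}_{\lambda}$ is a Lie algebra --- exactly as in your final paragraph, but with no covariant derivatives anywhere. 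You instead first trade i)--iii) for the $k$-free characterization ``$\nabla^{m}\mathcal{L}_{X}g$ is of type $III$ for all $m\geq 0$'' via Theorem \ref{AlgebraicStability} with $s=-1$; this reduction is sound (your observation that $(\mathcal{L}_{k})^{2}\mathcal{L}_{X}g=0$ is the same as boost-order $\leq -3$ for a rank-$2$ tensor is correct, and the passage from ``for each Kundt $k$'' to ``for some'' works because Kundt fields exist locally about every point). Your stability lemma via $\mathcal{A}=\mathcal{L}_{X}\nabla$ and the Yano-type identity $\nabla\mathcal{L}_{X}B=\mathcal{L}_{X}\nabla B-\mathcal{A}*B$ is also correct, and the boost-order bookkeeping closes as you say: $g^{-1}$ is parallel with only boost-weight-zero components, $\mathcal{L}_{X}$ preserves boost-order for $X\in\mathfrak{g}_{\lambda}$ (note that $[X,\lambda^{\perp}]\subset\lambda^{\perp}$ follows from the nil-Killing property together with $[X,\lambda]\subset\lambda$, a fact the paper also uses silently), and a contraction of two type $III$ factors has boost-order $\leq -2$. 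Comparing the two: the paper's computation is shorter and, notably, uses the degenerate Kundt hypothesis only through the existence of Kundt vector fields, so it would go through for merely Kundt triples; your route genuinely needs degenerate Kundt (Theorem \ref{AlgebraicStability} is where that hypothesis enters), but it buys more --- the intrinsic description $\mathfrak{h}_{\lambda}=\{X\in\mathfrak{g}_{\lambda}:\nabla^{m}\mathcal{L}_{X}g\text{ of type } III \text{ for all } m\geq 0\}$, which ties $\mathfrak{h}_{\lambda}$ directly to the spi-preserving deformations of Theorem \ref{DefTheorem} (those whose deformation tensor has all covariant derivatives of type $III$), and the stronger statement that $\mathcal{L}_{X}$ preserves the whole space $\mathcal{C}$ rather than just the particular tensor $\mathcal{L}_{Y}g$.
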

\begin{proof}
Suppose $X,Y\in \mathfrak{h}_{\lambda}$. If $k$ is a Kundt vector field, then
\begin{equation}
    \mathcal{L}_{k}\mathcal{L}_{X}\mathcal{L}_{Y}g=\mathcal{L}_{[k,X]}\mathcal{L}_{Y}g-\mathcal{L}_{X}(\mathcal{L}_{k}\mathcal{L}_{Y}g).
\end{equation}
Since $[k,X]$ is a Kundt vector field and the operator $\mathcal{L}_{X}$ preserves algebraic type, it follows that $\mathcal{L}_{k}\mathcal{L}_{X}\mathcal{L}_{Y}g$ is of boost-order $\leq -2$ w.r.t. $\lambda.$ Furthermore
\begin{equation}
    \begin{gathered}
    (\mathcal{L}_{k})^2\mathcal{L}_{X}\mathcal{L}_{Y}g=\mathcal{L}_{k}\mathcal{L}_{[k,X]}\mathcal{L}_{Y}-\mathcal{L}_{k}\mathcal{L}_{X}\mathcal{L}_k\mathcal{L}_{Y}g\\
    =\mathcal{L}_{k}\mathcal{L}_{[k,X]}\mathcal{L}_{Y}g-\mathcal{L}_{[k,X]}\mathcal{L}_k\mathcal{L}_{Y}g+\mathcal{L}_{X}(\mathcal{L}_{k})^2\mathcal{L}_{Y}g=0.
    \end{gathered}
\end{equation}

The same is true when we switch the order of $X$ and $Y$ and therefore the Jacobi identity shows that
\begin{equation}
    \mathcal{L}_{k}\mathcal{L}_{[X,Y]}g=\mathcal{L}_{k}\mathcal{L}_{X}\mathcal{L}_{Y}g-\mathcal{L}_{k}\mathcal{L}_{Y}\mathcal{L}_{X}g
\end{equation}
is of boost-order $\leq -2$, w.r.t. $\lambda$, and
\begin{equation}
    (\mathcal{L}_{k})^2\mathcal{L}_{[X,Y]}g=(\mathcal{L}_{k})^2\mathcal{L}_{X}\mathcal{L}_{Y}g-(\mathcal{L}_{k})^2\mathcal{L}_{Y}\mathcal{L}_{X}g=0.
\end{equation}
Hence $[X,Y]\in \mathfrak{h}_{\lambda}.$
\end{proof}

Let us give a coordinate presentation of nil-Killing vector fields satisfying the assumptions of proposition \ref{EasySpiPreservation} with the added assumption that $X$ preserves the algebraic structure. Suppose that $(M,g,\lambda)$ is degenerate Kundt and let $U$ be an open set such that there exists coordinates $(u,v,x^{k})$ such that the metric $g$ takes the form \eqref{DegkundtCoordinates} on $U.$ By proposition \ref{CoordinateNilkilling} a vector field $X$ is nil-Killing w.r.t. $\lambda$ and satisfies $[X,\lambda]\subset \lambda$ iff. there exists function $A(u),$ $B(u,x^k)$ and $C^{i}(u,x^{k})$, for $i=1,\dots n-2$ such that
\begin{equation}
    X=A(u)\frac{\partial}{\partial u}+(-v\frac{\partial A}{\partial u}(u)+B(u,x^{k}))\frac{\partial}{\partial v}+C^{i}(u,x^{k})\frac{\partial}{\partial x^{i}}
\end{equation}
where
\begin{equation}\label{TransverseKilling}
    C^{k}\frac{\partial \tilde{g}_{ij}}{\partial x^{k}} + \tilde{g}_{jk}\frac{\partial C^{k}}{\partial x^{i}}+\tilde{g}_{ik}\frac{\partial C^{k}}{\partial x^{j}} + A\frac{\partial \tilde{g}_{ij}}{\partial u}=0.
\end{equation}
Plugging this back into $\mathcal{L}_{X}g$ we get
\begin{equation}
    \mathcal{L}_Xg=\Lambda duu +\Omega_{i}dudx^{i}
\end{equation}
where
\begin{multline}\label{LieDeriv1}
    \Lambda=2\Big{\{}X(H^{(2)})v^2+[\frac{\partial}{\partial u}(AH^{(1)})+C^{i}\frac{\partial}{\partial x^{i}}H^{(1)}+2BH^{(2)}+W_{i}^{(1)}]v\\
    +A\frac{\partial}{\partial u}H^{(0)}+2H^{(0)}\frac{\partial}{\partial u}A+BH^{(1)}+C^{i}\frac{\partial}{\partial x^{i}}H^{(0)}+\frac{\partial}{\partial u}B + W_{i}^{(0)}\frac{\partial}{\partial u}C^{i}\Big{\}}
\end{multline}
and
\begin{multline}
    \Omega_{i}=2\Big{\{}{v[A\frac{\partial}{\partial u}W_{i}^{(1)}+C^{j}\frac{\partial}{\partial x^{j}}W_{i}^{(1)}+W_{j}^{(1)}\frac{\partial}{\partial x^{i}}C^{j}]}
    +A\frac{\partial}{\partial u}W_{i}^{(0)}+BW_{i}^{(1)}\\ + C^{j}\frac{\partial}{\partial x^{j}}W_{i}^{(0)} +\frac{\partial}{\partial x^{i}}B + W_{j}^{(0)}\frac{\partial}{\partial x^{i}}C^{j}+W_{i}^{(0)}\frac{\partial}{\partial u}A + \tilde{g}_{ij}\frac{\partial}{\partial u}C^{j}\Big{\}}.
\end{multline}
Hence in these coordinates we see that a vector field which preserves algebraic structure and is nil-Killing w.r.t. $\lambda$ satisfies the hypothesis of \ref{EasySpiPreservation}, i.e., $\mathcal{L}_{k}\mathcal{L}_{X}g$ is of boost-order $\leq -2$ and $(\mathcal{L}_{k})^{2}\mathcal{L}_{X}g=0,$ if and only if 
\begin{equation}\label{nilKillingH2}
    X(H^{(2)})=0\quad \text{ and } \quad A\frac{\partial}{\partial u}W_{i}^{(1)}+C^{j}\frac{\partial}{\partial x^{j}}W_{i}^{(1)}+W_{j}^{(1)}\frac{\partial}{\partial x^{i}}C^{j}=0.
\end{equation}

\begin{proposition}
Suppose $(M,g)$ is a Lorentzian manifold such that if $V^{\prime}\subset M$ is open we can find an open set $V\subset V^{\prime}$ for which the collection of null-distributions $\lambda$ on $V$ such that $(V,g,\lambda)$ is denerate Kundt, is non-empty and finite. Then each point of $M$ has a neighborhood $U$ with a deformation $g_t$ of $g$ restricted to $U,$ such that $g_t$ is an $\mathcal{I}$-degeneracy in the sense of definition \ref{Ideg} on U.
\end{proposition}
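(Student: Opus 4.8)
The plan is to produce the $\mathcal{I}$-degeneracy explicitly as a linear deformation $g_t = g + t\,h$ in a type $III$ direction, choosing $h$ so that the spi's are preserved (via Theorem \ref{DefTheorem}) while $h$ fails to be the Lie derivative of any nil-Killing field, and then to use the finiteness hypothesis to upgrade this infinitesimal non-triviality to genuine non-isometry.

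First I would fix a point $p$ and, using the hypothesis, choose a neighbourhood $U$ on which $g$ admits degenerate Kundt coordinates \eqref{DegkundtCoordinates} and on which there are only finitely many null distributions $\lambda_1 = \lambda,\dots,\lambda_N$ making $(U,g,\lambda_j)$ degenerate Kundt. Working with $k=\partial_v\in\lambda$, I would take a deformation tensor $h$ of the admissible form \eqref{DeformationCoordinates}, i.e. $h = du[(vP^{(1)}+P^{(0)})du + Q_i\,dx^i]$ with $P^{(1)},P^{(0)},Q_i$ smooth functions of $(u,x^k)$, and set $g_t := g + t\,h$. A direct computation gives $\mathcal{L}_{\partial_v}h = P^{(1)}\,du\,du$ (boost order $\leq -2$) and $(\mathcal{L}_{\partial_v})^2 h = 0$, and the scaling argument of Proposition \ref{DegkundtChar} propagates these to every Kundt vector field $k$; hence the hypotheses of Theorem \ref{DefTheorem} hold and all spi's are preserved along $g_t$. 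Since $h$ has neither a $dv\,dv$ nor a $dx^i\,dx^j$ component, each $g_t$ retains the form \eqref{DegkundtCoordinates} with the \emph{same} transverse metric $\tilde g_{ij}$ and is Lorentzian for small $t$.

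Next I would isolate the deformations tangent to the isometry orbit. A type $III$ deformation $h=\mathcal{L}_X g$ forces $X$ to be nil-Killing with $[X,\lambda]\subset\lambda$, and \eqref{nilKillingH2} is precisely the condition making $\mathcal{L}_X g$ of the form \eqref{DeformationCoordinates}; by \eqref{LieDeriv1} the resulting $(P^{(1)},P^{(0)},Q_i)$ is built from the data $A(u),B(u,x^k),C^i(u,x^k)$ of Proposition \ref{NilKillinInCoordinates}, where $A$ depends on $u$ alone and $C^i$ must solve the transverse Killing equation \eqref{TransverseKilling}. Comparing the $v$-independent part $\Omega_i^{(0)}$ in \eqref{LieDeriv1} with the target $Q_i$, one sees that prescribing $Q_i$ becomes a system $\partial_{x^i}B + W_i^{(1)}B = (\text{known})$, i.e. $n-2$ equations for the single function $B$ once $A$ and the finitely many transverse-Killing $C^i$ are fixed, an overdetermined problem. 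A functional count therefore shows that the map from nil-Killing data to deformations \eqref{DeformationCoordinates} is far from surjective, so I can choose $(P^{(1)},P^{(0)},Q_i)$ — equivalently $h$ — that is not of the form $\mathcal{L}_X g$ for any nil-Killing $X$. With this choice $h$ is transverse, among type $III$ directions, to the tangent space of the orbit of $g$ at $t=0$.

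Finally I would convert this transversality into genuine non-isometry, which is the main obstacle. Here the finiteness hypothesis is indispensable: since $(U,g,\lambda_j)$ is degenerate Kundt for only finitely many distributions, any isometry $\psi$ with $\psi^* g_t = g$ matches the finite sets of degenerate Kundt distributions of $g$ and $g_t$, and for $t$ small these sets are close, so $\psi$ must preserve $\lambda$. An isometry preserving $\lambda$ preserves the whole Kundt structure \eqref{DegkundtCoordinates}, hence descends to an isometry of the common transverse metric together with a gauge of the remaining $(H,W_i)$ data — exactly the transformations generated by $\mathfrak{g}_\lambda$ at the infinitesimal level. Thus if $g_{t_n}$ were isometric to $g$ for a sequence $t_n\downarrow 0$, the rigidity furnished by finiteness would let one control and differentiate the family $\psi_{t_n}$ and conclude $h=\mathcal{L}_X g$ for some nil-Killing $X$, contradicting the choice of $h$. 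Hence $g_t$ leaves the orbit of $g$ immediately and is an $\mathcal{I}$-degeneracy on $U$ in the sense of Definition \ref{Ideg}. The delicate step I expect to require the most care is making this rigidity precise — controlling the regularity and limit of the hypothetical isometries so that differentiation is justified — which is exactly the point at which the finiteness of degenerate Kundt distributions cannot be dispensed with.
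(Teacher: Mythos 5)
Your construction and your spi-preservation step track the paper's proof closely: the paper likewise takes a linear deformation $g_t=g+t(vP+Q)\,du\,du$ (a special case of your ansatz \eqref{DeformationCoordinates} with $Q_i=0$), invokes Theorem \ref{DefTheorem} to fix the spi's, and chooses $(P,Q)$ so that $\mathcal{L}_{X}g\neq (vP+Q)\,du\,du$ for every algebra-preserving nil-Killing $X$, justifying existence of such a pair ``by inspection'' of \eqref{TransverseKilling}, \eqref{LieDeriv1} and \eqref{nilKillingH2} — your functional count is at the same level of rigor as the paper's inspection, so no complaint there.

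The genuine gap is in your final step, and you have flagged it yourself without closing it. The negation of $\mathcal{I}$-degeneracy only hands you, for each $t$ in a sequence $t_n\downarrow 0$, \emph{some} isometry $\psi_{t_n}$ with $\psi_{t_n}^{*}g=g_{t_n}$, with no relation between the $\psi_{t_n}$ for different $n$. Two of your assertions then have no support. First, finiteness pins down the distribution only through continuity: the paper's argument is that the curve $t\mapsto (\psi_{t}^{-1})_{*}\lambda$ is a continuous path into the finite set of degenerate Kundt distributions with value $\lambda$ at $t=0$, hence constant; for an isolated isometry at one fixed small $t$, finiteness alone gives $(\psi^{-1})_{*}\lambda\in\{\lambda_1,\dots,\lambda_N\}$ but does not single out $\lambda$ — ``for $t$ small these sets are close'' is not an argument attached to any continuous family. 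Second, ``control and differentiate the family $\psi_{t_n}$'' presupposes exactly the regularity you lack: nothing guarantees the $\psi_{t_n}$ converge, let alone assemble into a family differentiable in $t$, so no nil-Killing $X$ with $\mathcal{L}_{X}g=h$ is produced and the contradiction never materializes. The paper does not attempt your stronger per-$t$ claim: it interprets failure of $\mathcal{I}$-degeneracy as the existence of a one-parameter family $\{\psi_t\}_{t\in I}$, regular in $t$, with $g_t=\psi_t^{*}g$; then continuity into the finite set forces $(\psi_t)_{*}\lambda=\lambda$, so $\psi_t\in\mathcal{G}^{-1}_{g}$, and differentiation at $t=0$ yields $X\in\mathfrak{g}_{\lambda}$ with $\mathcal{L}_{X}g=(vP+Q)\,du\,du$, the desired contradiction. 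That regularity hypothesis is precisely the ``assumption of uniqueness'' conceded in the paper's conclusion. If you adopt the same hypothesis, your argument closes along the paper's lines; as written, your sequence-based rigidity claim is an unproven hope, not a proof.
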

\begin{proof}
Given a point $p\in M,$ let $V$ be neighborhood such that there exists a null distribution $\lambda,$ for which $(U,g,\lambda)$ is degenerate Kundt. Now find a  neighborhood $\tilde{V}\subset V$ of $p$ with coordinates $(u,v,x^{k})$ such that $\frac{\partial}{\partial v}\in \lambda,$ and the local expression of the metric $g$ is given by \eqref{DegkundtCoordinates}. If necessary shrink to a neighborhood $U\subset \tilde{V}$ of $p$ for which there is a finite number of  degenerate Kundt null-distribution  and restrict the coordinates to $U$. Find functions $P(u,x^{k})$ and $Q(u,x^{k})$ such that if $X$ is a vector field on $U$ which preserves algebraic structure and is nil-Killing w.r.t. $\lambda,$
then 
\begin{equation}
    \mathcal{L}_{X}g\neq (vP(u,x^k)+Q(u,x^k))dudu.
\end{equation}
By inspection of expressions \eqref{TransverseKilling}, \eqref{LieDeriv1} and \eqref{nilKillingH2} such a pair always exists.

Now let $g_{t}$ be the deformation of $g$ given by
\begin{equation}\label{subequation}
    g_t=g+t(vP+Q)dudu.
\end{equation}
By theorem \ref{DefTheorem} the spi's of the metrics stay fixed along the deformation.

Now suppose that $g_t$ does not define an $\mathcal{I}$-degeneracy on $U$. Then there exists a family of diffeomorphisms $\{\psi_{t}\}_{t\in I}$ of $U$ defined for some interval $I$ about $0,$ such that $g_t=(\psi_{t})^{*}g$, for all $t\in I.$ Equation \eqref{subequation} implies that $\psi_{t}$ is an isometry on $\lambda^{\perp}.$ Letting $k=\frac{\partial}{\partial v}$, then if $t\in I,$ we see that
\begin{equation}
    (\psi_{t})^{*}(\mathcal{L}_{(\psi_{t}^{-1})^*k}g)=\mathcal{L}_{k}(\psi_{t}^{*}g)=\mathcal{L}_{k}g+tPdudu.
\end{equation}
It follows that $(U,g,(\psi_{t}^{-1})_{*}\lambda)$ is Kundt. Moveover
\begin{equation}
    (\psi_{t})^{*}(\mathcal{L}_{(\psi_{t}^{-1})^*k})^2g =(\mathcal{L}_{k})^2(\psi_{t}^{*}g)=(\mathcal{L}_{k})^2g
\end{equation}
and similarly
\begin{equation}
    (\psi_{t})^{*}(\mathcal{L}_{(\psi_{t}^{-1})^*k})^3g=(\mathcal{L}_{k})^3(\psi_{t}^{*}g)=(\mathcal{L}_{k})^{3}g=0,
\end{equation}
from which it follows that $(U,g,(\psi_{t}^{-1})_{*}\lambda)$ is degenerate Kundt. By finiteness of the collection of degenerate Kundt null-distributions we see that $(\psi_{t})_{*}(\lambda)=\lambda$ and therefore $\psi_{t}$ preserves algebraic structure, for all $t.$ Thus it is clear that $\{\psi_{t}\}_{t\in I}$ belongs to the group $\mathcal{G}_{g}^{-1}$ from section \ref{Lie algebra} and therefore we can find a vector field $X\in\mathfrak{g}^{-s}_{g}$, which by definition is nil-Killing w.r.t. $\lambda$ and preserves algebraic structure, such that $\mathcal{L}_{X}g=\frac{\partial}{\partial t}\psi_{t}^{*}g=(Pv+Q)dudu.$ This gives a contradiction, showing that $g_t$ is an $\mathcal{I}$-degeneracy.
\end{proof}

\section{Kundt-CSI spacetimes}
Recall that a Lorentzian manifold $(M,g)$ is said to be CSI (Constant scalar invariants), if all the scalar polynomial curvature invariants of $g$ are constant across the manifold. The analogous property for a Riemannian manifold would force it to be locally homogeneous by results in \cite{CurvatureInvariantsRiemann}. A Pseudo-Riemannian manifold is locally homogeneous iff. about each point we can find a transitive collection of local Killing vector fields.

We wish to generalize this property to CSI spacetimes by characterizing them as those Lorentzian manifolds which at each point have a transitive collection of vector fields generalizing properties of Killing vector fields. In short we wish to define a Lie algebra of vector fields generalizing the property of Killing vector fields in the sense that they preserve the spi's of a metric.

It was suggested in \cite{herviknew} that nil-Killing vector fields might provide a suitable class of vector fields for such a characterization. In order for the collection of them to constitute a Lie algebra we need the added assumption that they preserve algebraic structure. We have the following proposition which was first proved in \cite{IDIFF} and for which we give a simpler proof:
\begin{proposition}\label{CSILocallytransitive}
Suppose that $(M,g,\lambda)$ is a Kundt spacetime which is CSI. About each point $p\in M$ we can find a locally transitive collection, $\{X_{i}\}_{i\in I}$, of algebra preserving nil-Killing vector fields w.r.t. $\lambda$.
\end{proposition}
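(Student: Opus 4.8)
The goal is to produce, about any point $p$ of a Kundt CSI spacetime, a locally transitive collection of algebra-preserving nil-Killing vector fields with respect to $\lambda$. The plan is to work in the adapted coordinates $(u,v,x^{k})$ of Proposition \ref{Kundttransverse} and \ref{NilKillinInCoordinates}, in which $k=\frac{\partial}{\partial v}$ is a Kundt vector field and the metric is
\begin{equation}
    g=2du(dv+Hdu+W_{i}dx^{i})+\tilde{g}_{ij}(u,x^{k})dx^{i}dx^{j}.
\end{equation}
The transitivity we must achieve is pointwise spanning of $T_{p}M$ by the vector fields in the collection, so it suffices to exhibit algebra-preserving nil-Killing vector fields whose values at $p$ span each of the three coordinate blocks: the $\frac{\partial}{\partial v}$ direction ($\lambda_{p}$), the transverse $\frac{\partial}{\partial x^{i}}$ directions ($(\lambda^{\perp}/\lambda)_{p}$), and the $\frac{\partial}{\partial u}$ direction.

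First I would handle the easy directions. The field $k=\frac{\partial}{\partial v}$ is itself nil-Killing w.r.t. $\lambda$ (it is Kundt, cf. Proposition \ref{KundtChar}) and preserves algebraic structure, giving the $\lambda_{p}$ direction immediately; by Proposition \ref{NilKillinInCoordinates} it has the required coordinate form with $A=0$, $C^{i}=0$, $B=1$. The transverse directions are where the CSI hypothesis enters through local homogeneity: the key step is to argue that the CSI condition forces the transverse metric $\tilde{g}_{ij}(u,x^{k})dx^{i}dx^{j}$ to be locally homogeneous for each fixed $u$. Granting this, Proposition \ref{Kundttransverse} supplies $(n-2)$ local space-like nil-Killing vector fields lying in $\lambda^{\perp}$ and linearly independent at $p$, spanning the transverse block. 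I expect this reduction to local homogeneity of the transverse metric to be the main obstacle: one must show that constancy of the spi's of $g$ descends to constancy of the spi's (equivalently local homogeneity, via \cite{CurvatureInvariantsRiemann}) of the positive-definite transverse slice, presumably by relating the curvature invariants of $\tilde{g}$ to boost-weight-zero components of $\nabla^{m}Rm$ for $g$, which are exactly the quantities the spi's control for type $II$ tensors.

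To obtain the remaining $\frac{\partial}{\partial u}$ direction I would seek a nil-Killing vector field of the form $X=A(u)\frac{\partial}{\partial u}+(-v\frac{\partial A}{\partial u}+B)\frac{\partial}{\partial v}+C^{i}\frac{\partial}{\partial x^{i}}$ with $A(p)\neq 0$, subject to the transverse Killing equation \eqref{Ceq}. By Proposition \ref{NilKillinInCoordinates} this exists precisely when one can solve \eqref{Ceq} with a nonvanishing $A$; here again local homogeneity of the transverse metric is the enabling input, since with $A$ chosen freely the equation \eqref{Ceq} is an inhomogeneous Killing-type equation for $C^{i}$ whose solvability follows from the abundance of transverse isometries together with smoothness in $u$. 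Finally I would verify that the full assembled collection consists of algebra-preserving nil-Killing fields—each satisfies $[X,\lambda]\subset\lambda$ and $[X,\lambda^{\perp}]\subset\lambda^{\perp}$ by its coordinate form and Observation \ref{algebraic preservation}—and that their values at $p$ span $T_{p}M$, giving local transitivity. The cleanest route is therefore: (1) reduce CSI to local homogeneity of the transverse metric; (2) invoke Proposition \ref{Kundttransverse} for the transverse block; (3) adjoin $k$ and a suitable $u$-field; (4) check algebra preservation and spanning.
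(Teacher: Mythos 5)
Your overall skeleton matches the paper's proof: adapted Kundt coordinates, reduction of CSI to a homogeneity property of the transverse metric, Proposition \ref{Kundttransverse} for the transverse block, and adjoining $k=\frac{\partial}{\partial v}$. The paper, however, does not prove the reduction step itself; it simply cites the known result from \cite{CSI}, and crucially that result gives \emph{more} than what you extract: for a Kundt-CSI spacetime the coordinates can be chosen so that $\tilde{g}_{ij}(u,x^{k})$ is not only locally homogeneous for each $u$ but actually \emph{independent of} $u$. This stronger conclusion is exactly what handles the $\frac{\partial}{\partial u}$ direction, via the final clause of Proposition \ref{Kundttransverse}: $\frac{\partial}{\partial u}\tilde{g}=0$ if and only if $\frac{\partial}{\partial u}$ is itself nil-Killing w.r.t.\ $\lambda$, so no construction is needed.

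The genuine gap in your proposal is step (3), where you try to manufacture the $u$-direction field by solving \eqref{Ceq} with $A(p)\neq 0$, claiming solvability "follows from the abundance of transverse isometries." It does not: with $\tilde{g}$ independent of $v$, equation \eqref{Ceq} reads $\mathcal{L}_{C}\tilde{g}(u)=-A\,\partial_{u}\tilde{g}(u)$ on each slice, and per-slice local homogeneity places no constraint on whether $\partial_{u}\tilde{g}$ lies in the image of the Lie-derivative operator. For instance, a family of hyperbolic metrics with $u$-dependent curvature $-(1+u^{2})$ is locally homogeneous for every $u$, yet the slicewise scalar curvature satisfies $C(R)=0$ for any slice vector field $C$ while $-A\,\partial_{u}R\neq 0$, so the equation is unsolvable for $A\neq 0$. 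Such families are of course excluded in the CSI setting, but only because CSI forces the $u$-independence (up to the coordinate choice) delivered by \cite{CSI} — i.e., the CSI hypothesis must be invoked a second time, in a stronger form than the per-$u$ homogeneity you granted yourself. Once you replace your step (3) by the cited $u$-independence and the "Moreover" clause of Proposition \ref{Kundttransverse}, your argument coincides with the paper's.
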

\begin{proof}
Suppose that $p\in M.$ Find local coordinates $(u,v,x^{k})$ about $p$ with $k:=\frac{\partial}{\partial v}\in \lambda$ and $k^{\natural}=du.$ Then writing
$g=2du(dv+Hdu+W_{i}dx^{i})+\tilde{g}_{ij}(u,x^{k})dx^{i}dx^{j}$ it follows from a result in \cite{CSI}, that $\tilde{g}_{ij}(u,x^{k})$ is independent of $u$ and locally homogeneous. Therefore by the results of proposition \ref{Kundttransverse}, we can find a locally transitive collection of algebra preserving nil-Killing vector fields w.r.t. $\lambda.$
\end{proof}
Note however that algebra preserving nil-Killing vector fields do not a priori preserve spi's and therefore the converse to this proposition is not true.

 The added assumptions of proposition \ref{EasySpiPreservation} ensure that such vector fields preserve the spi's of the metric. 
However the nil-Killing vector fields which preserve the algebraic structure given by this proposition cannot always give a transitive collection. This can be seen as follows: Consider a degenerate Kundt manifold $(M,g,\lambda)$ and find local coordinates $(u,v,x^k)$ such that the metric is given by the expression in \eqref{DegkundtCoordinates}. If we could find a locally transitive collection of nil-Killing vector fields satisfying the hypothesis of proposition \ref{EasySpiPreservation}, then by \eqref{nilKillingH2}, the function $H^{(2)}(u,x^{k})$ must be constant. However, there are many degenerate Kundt metrics which are CSI for which $H^{(2)}$ is not contant. A simple example can be found by considering the VSI (vanishing scalar invariants) manifold constructed in \cite{HigherDimVsi}, given by

\begin{equation}
    g=2du(dv +Hdu+W_{i}dx^{i})+\delta_{ij}dx^{i}dx^{j},
\end{equation}
where
\begin{equation}
    H(u,v,x^{k})=\frac{v^2}{2(x^1)^2} + vH^{(1)}(u,x^{k})+H^{(0)},
\end{equation}
\begin{equation}
    W_{1}(u,v,x^{k})=-\frac{2v}{x^1},\quad W_{i}(u,v,x^{k})=W_{i}^{0}(u,x^{k}),\quad i\neq 1.
\end{equation}
The metric $g$ is degenerate Kundt with spi's that vanish identically, and yet \begin{equation}H^{(2)}(u,x^{k})=\frac{1}{2(x^1)^2}.\end{equation}

However for degenerate Kundt spacetimes $(M,g,\lambda)$ for which $\lambda$ admits recurrent null vector fields such a characterization is possible:
\begin{proposition}
Suppose $(M,g,\lambda)$ is a degenerate Kundt spacetime such that about each point in $M$ there exists a local null vector field $k\in \lambda$ and a one-form $\omega,$ for which $\nabla k=\omega\otimes k.$ Then $(M,g,\lambda)$ is $CSI$ iff. about any $p\in M$ there  exists a locally transitive collection $\{X_{i}\}_{i\in I}$ of vector fields such that for $i\in I$, the following are satisfied:
\begin{enumerate}[i)]
    \item  $X_{i}$ is an algebra-preserving nil-Killing vector field w.r.t $\lambda,$ 
    \item $\mathcal{L}_{k}\mathcal{L}_{X_{i}}g$ is of boost-order $\leq -2$ w.r.t. $\lambda,$
    \item $(\mathcal{L}_{k})^2\mathcal{L}_{X_{i}}g=0,$
\end{enumerate}
for each Kundt vector field $k.$
\end{proposition}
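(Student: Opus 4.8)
The plan is to prove the two implications separately; the reverse direction follows immediately from the machinery already developed, whereas the forward direction requires turning the recurrence hypothesis into control of the quadratic coefficient $H^{(2)}$. For $(\Leftarrow)$, observe that conditions i)--iii) imposed on each $X_i$ are precisely the two hypotheses of Proposition \ref{EasySpiPreservation} together with its supplementary identity $(\mathcal{L}_k)^2\mathcal{L}_{X_i}g=0$; that proposition then yields $X_i(\mathcal{I})=0$ for every spi $\mathcal{I}$ of $g$ and every $i\in I$. Since $\{X_i\}_{i\in I}$ is locally transitive, a spanning set of tangent vectors annihilates each $d\mathcal{I}$, so $d\mathcal{I}=0$ and every spi is locally constant; hence $(M,g,\lambda)$ is CSI.

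For $(\Rightarrow)$ the plan is to supply the collection from Proposition \ref{CSILocallytransitive} and to check that recurrence renders the extra conditions ii)--iii) automatic. First I would normalise the recurrent field. Recurrence is preserved under rescaling within $\lambda$, so after passing to degenerate Kundt coordinates \eqref{DegkundtCoordinates} adapted to $\lambda$ the coordinate field $k=\partial_v$ is itself recurrent, with $k^{\natural}=du$. From $\nabla k=\omega\otimes k$ one gets $dk^{\natural}=\omega\wedge k^{\natural}$, and since $du$ is closed this forces $\omega\wedge du=0$, i.e. $\omega=H_{,v}\,du$; equivalently the linear-in-$v$ parts of the $W_i$ vanish, $W_i^{(1)}=0$. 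Reading off \eqref{nilKillingH2}, the requirement that $\mathcal{L}_k\mathcal{L}_Xg$ have boost-order $\le -2$ is then satisfied automatically, while $(\mathcal{L}_k)^2\mathcal{L}_Xg=0$ reduces to $X(H^{(2)})=0$. Thus every algebra-preserving nil-Killing field will satisfy ii)--iii) as soon as $H^{(2)}$ is locally constant.

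The crux is to derive the local constancy of $H^{(2)}$ from the CSI hypothesis, and here I would use recurrence to exhibit $2H^{(2)}$ as a Ricci eigenvalue. Differentiating $\nabla_a k^{c}=\omega_a k^{c}$ and antisymmetrising gives $R^{c}{}_{dab}k^{d}=(d\omega)_{ab}k^{c}$; contracting $c$ with $a$ yields $R_{db}k^{d}=(d\omega)_{ab}k^{a}$. Evaluating in a null frame and using that, for degenerate Kundt $g$, the positive boost-weight Ricci components $R_{kk}$ and $R_{ki}$ vanish, I expect
\begin{equation}
\mathrm{Ric}(k)=2H^{(2)}\,k,
\end{equation}
so that $\lambda$ is an eigendirection of the Ricci operator with eigenvalue $2H^{(2)}$, independent of the scaling of $k$. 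The coefficients of the characteristic polynomial of the Ricci operator are polynomials in the traces $\mathrm{tr}(\mathrm{Ric}^{j})$, which are spi's and hence constant; so the eigenvalues form a fixed finite set of constants. The smooth real function $2H^{(2)}$ takes values in this set and is therefore locally constant by continuity. With $W_i^{(1)}=0$ and $H^{(2)}$ constant, the locally transitive family of algebra-preserving nil-Killing fields furnished by Proposition \ref{CSILocallytransitive} satisfies i) by construction and ii)--iii) through \eqref{nilKillingH2}, completing the forward direction.

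The step I expect to be the main obstacle is the identification $\mathrm{Ric}(k)=2H^{(2)}\,k$: this is where the recurrence hypothesis does its essential work, and it requires matching the recurrence identity against the degenerate Kundt vanishing of positive boost-weight curvature in a suitably adapted frame. Secondary care is needed in the rescaling that places the recurrent field at $\partial_v$ with $k^{\natural}=du$, and in the final continuity argument upgrading ``eigenvalue in a fixed finite set'' to ``locally constant''.
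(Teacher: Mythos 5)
Your proposal is correct, and its reverse direction coincides with the paper's (conditions i)--iii) are exactly the hypotheses of Proposition \ref{EasySpiPreservation}, and transitivity kills each $d\mathcal{I}$), but your forward direction takes a genuinely different route. The paper disposes of condition iii) tautologically: under CSI every spi is constant, so \emph{any} vector field preserves spi's, and the equivalence in Proposition \ref{EasySpiPreservation} --- applicable because recurrence makes condition ii) automatic, which the paper establishes by the frame-free commutator computation $\mathcal{L}_{k}\mathcal{L}_{X}g=\mathcal{L}_{[k,X]}g-\mathcal{L}_{X}\mathcal{L}_{k}g$ with $[k,X]=fk$ and $W(f)=0$ for $W\in\lambda^{\perp}$ --- then forces $(\mathcal{L}_{k})^{2}\mathcal{L}_{X}g=0$; Proposition \ref{CSILocallytransitive} supplies the transitive collection exactly as you do. You instead prove the stronger concrete statement that $H^{(2)}$ is locally constant, and your argument is sound: in coordinates \eqref{DegkundtCoordinates} recurrence of $\partial_{v}$ is equivalent to $W_{i}^{(1)}=0$ with $\omega=H_{,v}\,du$ (this is precisely the paper's opening observation that $\mathcal{L}_{k}g$ has boost-order $\le -2$), and the step you flagged as the main obstacle is actually easier than you anticipate, since one computes directly $\mathrm{Ric}(k)_{b}=(d\omega)_{ab}k^{a}$ with $d\omega=dH_{,v}\wedge du$, giving $\mathrm{Ric}(k)_{b}=H_{,vv}\,k_{b}=2H^{(2)}k_{b}$ (up to sign convention) without any appeal to the vanishing of positive boost-weight Ricci components; the characteristic-polynomial/Newton-identity argument then pins $2H^{(2)}$ in a fixed finite root set and continuity upgrades this to local constancy. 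What each approach buys: the paper's tautology is shorter and requires no curvature computation at all, while your route extracts the extra geometric fact that recurrence plus CSI forces $H^{(2)}$ locally constant, and it verifies ii)--iii) for \emph{every} algebra-preserving nil-Killing field through \eqref{nilKillingH2} without invoking the ``only if'' direction of Proposition \ref{EasySpiPreservation}. One point worth making explicit in your write-up: conditions ii)--iii) are demanded for every Kundt vector field, not only the coordinate field $\partial_{v}$; you implicitly rely on the paper's assertion that \eqref{nilKillingH2} is equivalent to the hypothesis for all Kundt fields (any such field is $f\partial_{v}$ with $f$ annihilated by $\lambda$, so the extension is a routine rescaling computation, as in the proof of Proposition \ref{DegkundtChar}), which is fine but deserves a sentence.
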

\begin{proof}
Since $\lambda$ contains local vector fields which are recurrent, it follows that about each point $p\in M$ there exists a coordinates $(u,v,x^{k})$ such that $k:=\frac{\partial}{\partial v}\in \lambda$ and $k^{\natural}=du$ and $\mathcal{L}_{k}g$ is of boost-order $\leq -2$ w.r.t. $\lambda$.

Now suppose that $X$ is an algebra-preserving nil-Killing vector field w.r.t. $\lambda.$ Expressing $X$ by the coordinate  expression \eqref{NilKillinInCoordinates} it is clear that $[k,X]=fk$ for some smooth function $f$ for which $W(f)=0,$ for all $W\in \lambda^{\perp}.$ Therefore \begin{equation}\mathcal{L}_{[k,X]}g=df\otimes_{S} k^{\natural}+f\mathcal{L}_{k}g\end{equation} is of boost-order $\leq -2$ and hence 
\begin{equation}
    \mathcal{L}_{k}\mathcal{L}_Xg=\mathcal{L}_{[k,X]}g-\mathcal{L}_{X}\mathcal{L}_{k}g
\end{equation}
is also of boost-order $\leq -2$ w.r.t. $\lambda,$ since $\mathcal{L}_{X}$ preserves algebraic structure. It is now easy to see that $\mathcal{L}_{k}\mathcal{L}_Xg$ is of boost-order $\leq -2$ w.r.t. $\lambda$ for all Kundt vector fields $k$. By proposition \ref{EasySpiPreservation} it follows that $X$ preserves the spi's of $g$ iff. $(\mathcal{L}_{k})^2\mathcal{L}_{X}g=0,$ for each Kundt vector field $k.$

$"\Rightarrow"$
Suppose that $(M,g,\lambda)$ is CSI. Then tautologically, if $X$ is an algebra preserving nil-Killing vector field w.r.t. $\lambda$, it preserves spi's, and therefore by the above discussion $X$ satisfies $i),ii)$ and $iii).$ The result therefore follows from proposition \ref{CSILocallytransitive}.

$"\Leftarrow"$ This is a consequence of proposition \ref{EasySpiPreservation}.
\end{proof}

 Lastly, our discussion in section \ref{spipreservingdefs} has among other shown the following: \emph{A nil-Killing vector field $X$ on a degenerate Kundt spacetime preserves spi's provided that $\mathcal{L}_{X}\nabla^{m}Rm$ is of type $III$ with respect to the given null-distribution, for all $m\geq 0.$ }
 
 Hence the existence of a locally transitive collection of such vector fields about any given point would ensure the metric to be CSI. In future work we shall attempt to show that the converse also holds: That a given degenerate Kundt metric which is CSI has such a locally transitive collection about any given point. The next result taken together with proposition \ref{CSILocallytransitive} shows that this is true in dimension three:
\begin{proposition}
Suppose that $(M,g,\lambda)$ is a three-dimensional degenerate Kundt spacetime. Let $X$ be an algebra preserving nil-Killing vector field w.r.t. $\lambda$. $X$ preserves spi's iff. $\mathcal{L}_{X}\nabla^{m}Rm$ is of type $III$ w.r.t. $\lambda,$ for all $m\geq 0.$
\end{proposition}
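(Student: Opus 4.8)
The plan is to prove the two implications separately; the forward implication (from the type $III$ condition to spi-preservation) holds in every dimension, whereas the reverse implication is where three-dimensionality is decisive.

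For the implication assuming $\mathcal{L}_{X}\nabla^{m}Rm$ is of type $III$ for all $m\geq 0$, I would argue exactly as in Proposition \ref{SpiPreservationOfTensor}. Since $X$ is nil-Killing, $\mathcal{L}_{X}g$ is of type $III$ and, as in that proof, $\mathcal{L}_{X}g^{-1}$ is of type $III$ as well. Any scalar curvature invariant $\mathcal{I}$ is a full contraction of copies of $g$, $g^{-1}$ and the tensors $\nabla^{m}Rm$, all of which are of type $II$ because $(M,g,\lambda)$ is degenerate Kundt. Applying the Leibniz rule to $X(\mathcal{I})=\mathcal{L}_{X}\mathcal{I}$ yields a sum in which each summand is a full contraction carrying exactly one type $III$ factor against type $II$ factors, and each such contraction vanishes. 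Hence $X(\mathcal{I})=0$ and $X$ preserves the spi's; nothing about the dimension is used here.

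For the converse, suppose $X$ preserves the spi's and let $\phi_{t}$ be its flow. Fix $p\in M$ and $m\geq 0$ and consider the path $S_{t}:=(\phi_{t}^{*}\nabla^{m}Rm)_{p}$ of tensors on $T_{p}M$. Each $S_{t}$ is of type $II$ with respect to $\lambda_{p}$ since $X$ preserves algebraic type and $\nabla^{m}Rm$ is of type $II$. I would first verify that the spi's of $S_{t}$, computed with the fixed inner product $g_{p}$, are independent of $t$. Indeed, by Corollary \ref{NilFlow} the inner product $(\phi_{t}^{*}g)_{p}$ differs from $g_{p}$ by a type $III$ tensor, so Proposition \ref{null-transformation}, applied to the identity map of $T_{p}M$, shows that the spi's of the type $II$ tensor $S_{t}$ agree whether computed with $g_{p}$ or with $(\phi_{t}^{*}g)_{p}$; the latter equal the spi's of $\nabla^{m}Rm$ at $\phi_{t}(p)$, which are constant in $t$ precisely because $X$ preserves the spi's. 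Thus $S_{t}$ is a smooth path of type $II$ tensors with constant spi's, and the partial converse recalled from \cite{SCPI} supplies a path $\psi_{t}\in Sim(n-2)$ and a path $P_{t}$ of type $III$ tensors with $S_{t}=\psi_{t}^{*}(S_{0})+P_{t}$.

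Differentiating at $t=0$ gives, at $p$, the identity $\mathcal{L}_{X}\nabla^{m}Rm=\dot{\psi}_{0}\cdot\nabla^{m}Rm+\dot{P}_{0}$, and it remains only to show that the boost-weight zero part of the right-hand side vanishes. This is exactly where $n=3$ enters. Since $n-2=1$, the transverse rotation subgroup $O(n-2)$ is discrete, so the connected component of $Sim(1)$ is generated solely by boosts and by null rotations fixing $\lambda$. The infinitesimal boost multiplies each boost-weight $s$ component by $s$ and therefore annihilates the boost-weight zero part, while a null rotation fixing $\lambda$ strictly lowers boost weight and hence leaves the leading, boost-weight zero part of a type $II$ tensor unchanged, so its generator also annihilates that part. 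Consequently $(\dot{\psi}_{0}\cdot\nabla^{m}Rm)_{0}=0$, whereas $(\dot{P}_{0})_{0}=0$ because $P_{t}$ is of type $III$; thus $\mathcal{L}_{X}\nabla^{m}Rm$ has vanishing boost-weight zero part, i.e.\ is of type $III$. I expect the main obstacle to be making this last, representation-theoretic step fully rigorous: one must pin down how an infinitesimal element of $Sim(1)$ acts on the boost-weight zero slot and confirm that---unlike in higher dimensions, where the factor $SO(n-2)$ acts there nontrivially---this induced action is trivial when the transverse space is one-dimensional.
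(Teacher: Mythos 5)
Your proposal is correct in substance, but it reaches the hard direction by a genuinely different route than the paper. For the implication ``type $III$ for all $m$ $\Rightarrow$ spi-preservation'' you and the paper agree: both are dimension-independent Leibniz arguments in the spirit of Proposition \ref{SpiPreservationOfTensor} (the paper simply refers back to section \ref{spipreservingdefs}). For the converse, the paper works with frames: it cites the three-dimensional result of \cite{SCPI} that one can choose a Kundt frame $\{k,l,m\}$ in which the boost-weight zero components of $\nabla^{s}Rm$ are expressible in terms of spi's, hence constant along integral curves of $X$, and then computes directly that $\mathcal{L}_{X}m\in\lambda$, $\mathcal{L}_{X}k=fk$ and $\mathcal{L}_{X}l=-fl+ak+bm$, so that $\mathcal{L}_{X}$ applied to any boost-weight zero product of frame vectors has boost-order $\leq -1$. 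You instead fix $p$, form the path $S_{t}=(\phi_{t}^{*}\nabla^{m}Rm)_{p}$, verify via Corollary \ref{NilFlow} and Proposition \ref{null-transformation} that its spi's relative to the fixed $g_{p}$ are constant (this step is correct, and is exactly how the paper uses these results elsewhere), invoke the partial converse from \cite{SCPI} quoted in section 2 to write $S_{t}=\psi_{t}^{*}(S_{0})+P_{t}$, and then use that $\mathfrak{sim}(1)$ consists only of boosts and null rotations, both of which annihilate the boost-weight zero slot of a type $II$ tensor since $O(1)$ is discrete. That representation-theoretic step is sound: a boost scales the boost-weight $s$ part by $e^{s\lambda}$, so its generator kills the $s=0$ part, while a null rotation about $k$ acts as the identity on each graded quotient $\mathcal{T}_{k,s}/\mathcal{T}_{k,s-1}$, so for a type $II$ tensor it fixes the boost-weight zero components exactly; in higher dimensions the $SO(n-2)$ factor acts nontrivially there, which is precisely why the argument, and the proposition, are special to $n=3$. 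Your route makes this dimension-dependence transparent, whereas the paper's is more economical given the cited frame result; both ultimately rest on \cite{SCPI}.

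Two small repairs would make your argument airtight. First, you differentiate $\psi_{t}$ and $P_{t}$ at $t=0$, but the quoted partial converse only supplies paths, with no stated smoothness or normalization $\psi_{0}=\mathrm{id}$, $P_{0}=0$; you can sidestep this entirely by arguing at the group level: since the identity component of $Sim(1)$ acts trivially on the boost-weight zero quotient of type $II$ tensors, the decomposition gives $(S_{t})_{0}=(\psi_{t}^{*}S_{0})_{0}=(S_{0})_{0}$ for all $t$, and then you differentiate $S_{t}$ itself, which is manifestly smooth in $t$, to get $(\mathcal{L}_{X}\nabla^{m}Rm)_{0}=0$. Second, to pass from ``vanishing boost-weight zero part'' to ``type $III$'' you should state explicitly that $\mathcal{L}_{X}\nabla^{m}Rm$ is automatically of type $II$: this holds because $X$ preserves algebraic type and $(M,g,\lambda)$ is degenerate Kundt, so $\mathcal{L}_{X}\nabla^{m}Rm$ is a limit of type $II$ tensors; you set this up for $S_{t}$ but do not invoke it at the end.
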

\begin{proof}
$"\Rightarrow"$ If $X$ preserves spi's, then it follows from work in \cite{SCPI} that we can find a Kundt frame $\{k,l,m\}=\{e_1,e_2,e_3\}$ such that the boost-weight zero components of $\nabla^{s}Rm$, for $s\geq0,$ can be expressed in terms of spi's, and are therefore constant along the integral curves of $X.$ By the nil-Killing property
\begin{equation}
    \mathcal{L}_{X}g(m,m)=-2g(\mathcal{L}_{X}m,m)=0,
\end{equation}
from which it follows that $\mathcal{L}_{X}m\in \lambda.$
Since $X$ is algebra preserving there exists a smooth function $f$ such that $\mathcal{L}_{X}k=fk.$
Therefore
\begin{equation}
0=\mathcal{L}_{X}g(k,l)=-g(fk,l)-g(k,\mathcal{L}_{X}l)=-f-g(k,\mathcal{L}_{X}l),
\end{equation}
showing that 
\begin{equation}
    \mathcal{L}_{X}l=-fl + ak+bm,
\end{equation}
for some smooth function $a,b.$ It follows that
\begin{equation}
    \mathcal{L}_{X}(k\otimes l)=fk\otimes l +k\otimes (-fl + ak +bm)=a(k\otimes k)+b(k\otimes m) ,
\end{equation}
which is of boost-order $-1$ w.r.t. $\lambda.$ Thus given any tensor of the form
$
    A(e_{i_{1}}\otimes \cdots \otimes e_{i_k})
$
of boost-weight $0$, such that $X(A)=0$, then $\mathcal{L}_{X} A(e_{i_{1}}\otimes \cdots \otimes e_{i_k})$ is of boost-order $\leq -1,$ w.r.t. $\lambda.$ This implies that $\mathcal{L}_{X}\nabla^{s}Rm$ is of type $III$ w.r.t. $\lambda$, for all $s\geq 0.$

$"\Leftarrow"$ This follows from the discussion in the previous section.
\end{proof}

\section{Conclusion}
In this paper we have studied smooth deformations and transformations of  metrics in the direction of type $III$ tensors. The transformations of this nature  are given by the algebra preserving nil-Killing vector fields. We derive the local coordinate form of all such vector fields.

We classify Kundt spacetimes as those for which there exists a null-distribution, containing vector fields about each point which are nil-Killing with respect to the distribution, and whose orthogonal complement is integrable. Moreover we characterize the Kundt spacetimes having a locally homogeneous transverse metric in terms of the existence of a locally transitive collection of nil-Killing vector fields belonging to the orthogonal complement to the null-distribution. Lastly, we present a classification of degenerate Kundt metrics.

For degenerate Kundt spacetimes we provide a theorem which gives a classification of tensors whose boost-order remains unaltered under covariant derivatives. This characterization together with the use of deformation equations allows for a result giving conditions for a deformation to preserve spi's for degenerate Kundt spacetimes. 

We use this result to produce a class of nil-Killing vector fields which preserve scalar curvature invariants. In turn this is used in order to give a new proof that under an assumption of uniqueness, the deformations provided in \cite{SCPI,CharBySpi} leave the orbit of the metric. 

Lastly we discuss Kundt-CSI metrics in terms of nil-Killing vector fields. We show that each Kundt-CSI metric has locally transitive collections of nil-Killing vector fields about any point. Moreover we characterize Kundt-CSI metrics with recurrent null vector fields in terms of the existence of locally transitive collections of nil-Killing vector fields satisfying some added assumptions.

\section*{Acknowledgements}
I would like to thank  Lode Wylleman, Boris Kruglikov and my advisor Sigbjørn Hervik for helpful discussions concerning this project.

\medskip

\bibliography{NilKilling}{}
\bibliographystyle{plain}
\end{document}